
\documentclass[11pt, a4paper]{amsart}
\usepackage{color}
\usepackage{graphicx}
\usepackage[english]{babel}
\usepackage{amssymb}
\usepackage{amscd}
\usepackage{amsmath}
\usepackage{mathrsfs}
\setlength{\textwidth}{16cm} \setlength{\textheight}{21.5cm}
\setlength{\oddsidemargin}{0.0cm}
\setlength{\evensidemargin}{0.0cm}
\parskip 5pt

\def\RR{ I \! \! R}

\def\di{\displaystyle}

\def\R{{\mathbb {R}}}

\def\uep{u^{\ep}}
\def\uepj{u^{\ep_j}}
\def\uepk{u^{\ep_k}}
\def\vep{v^{\ep}}

\def\H{{\mathcal{H}}^{N-1}}

\def\L{{\mathcal{L}}}
\def\pint{\operatorname {--\!\!\!\!\!\int\!\!\!\!\!--}}

\def\ep{\varepsilon}

\def\wg{W^{1,G}(\Omega)}
\def\wg0{W_0^{1,G}(\Omega)}

\newtheorem{teo}{Theorem}[section]
\newtheorem{lema}{Lemma}[section]
\newtheorem{defi}{Definition}[section]
\newtheorem{prop}{Proposition}[section]
\newtheorem{corol}{Corollary}[section]
\theoremstyle{definition}
\newtheorem{remark}{Remark}[section]

\renewcommand{\theequation}{\arabic{section}.\arabic{equation}}

\begin{document}
\title[Singular perturbation for a degenerate or singular quasilinear operator]
{A singular perturbation problem for a quasilinear operator
satisfying the natural growth condition of Lieberman}

\author[ S. Mart\'{i}nez \& N. Wolanski]
{Sandra Mart\'{\i}nez and Noemi Wolanski}

\address{Departamento  de Matem\'atica, FCEyN
\hfill\break\indent UBA (1428) Buenos Aires, Argentina.}
\email{{ smartin@dm.uba.ar \\
wolanski@dm.uba.ar} \hfill\break\indent {\em Web-page:}
 {\tt http://mate.dm.uba.ar/$\sim$wolanski}}

\thanks{Supported by ANPCyT PICT No.
03-13719, UBA X52 and X66 and CONICET PIP 5478.}
\thanks{N.
Wolanski is a member of CONICET}

\keywords{free boundaries, Orlicz spaces, singular perturbation.
\\
\indent 2000 {\it Mathematics Subject Classification.} 35B25,
35B65,  35J65, 35R35}


\maketitle

\begin{abstract}
In this paper we study the following problem. For any  $\ep>0$,
take $u^{\ep}$ a solution of,
$$
 \L u^{\ep}:= \mbox{div\,}\Big(\di\frac {g(|\nabla \uep|)}{|\nabla
\uep|}\nabla \uep\Big)=\beta_{\ep}(u^{\ep}),\quad u^{\ep}\geq 0.
$$

A solution to  $(P_{\ep})$ is a function $u^{\ep}\in
W^{1,G}(\Omega)\cap L^{\infty}(\Omega)$ such that
$$
\int_{\Omega} g(|\nabla u^{\ep}|) \frac{\nabla u^{\ep}}{|\nabla
u^{\ep}|} \nabla \varphi\, dx =-\int_{\Omega} \varphi\,
\beta_{\ep}(u^{\ep})\, dx
$$
for every  $\varphi \in C_0^{\infty}(\Omega)$.

Here $\beta_{\ep}(s)= \frac{1}{\ep}
\beta\left(\frac{s}{\ep}\right), $ with  $\beta\in
\mbox{Lip}(\R)$, $\beta>0$ in $(0,1)$ and $\beta=0$ otherwise.

We are interested in  the limiting problem, when $\ep\to 0$. As in
previous work with $\L=\Delta$ or $\L=\Delta_p$ we prove, under
appropriate assumptions, that any limiting function is a weak
solution to a free boundary problem. Moreover, for nondegenerate
limits we prove that the reduced free boundary is a $C^{1,\alpha}$
surface. This result is new even for $\Delta_p$.

Throughout the paper we  assume that $g$ satisfies the conditions
introduced by G. Lieberman in \cite{Li1}.
\end{abstract}

\section{Introduction}
In this paper we study, the following singular perturbation
problem: For any $\ep>0$, take $u^{\ep}$ a nonnegative solution
of,
\begin{equation*}\label{ecsing}
\tag{$P_\ep$} \L u^{\ep}=\beta_{\ep}(u^{\ep}),\quad u^{\ep}\geq 0,
\end{equation*}
where $\L v:= \mbox{div\,}\Big(\di\frac {g(|\nabla v|)}{|\nabla
v|}\nabla v\Big)$.

 A solution to $(P_{\ep})$  is a function
$u^{\ep}\in W^{1,G}(\Omega)\cap L^{\infty}(\Omega)$ (see the
notation for the definition of $W^{1,G}(\Omega)$) such that
\begin{equation}\label{ecsin}
\int_{\Omega} g(|\nabla u^{\ep}|) \frac{\nabla u^{\ep}}{|\nabla
u^{\ep}|} \nabla \varphi\, dx =-\int_{\Omega} \varphi\,
\beta_{\ep}(u^{\ep})\, dx
\end{equation}
 for every $\varphi \in
C_0^{\infty}(\Omega)$.

Here  $\beta_{\ep}(s)= \frac{1}{\ep}
\beta\left(\frac{s}{\ep}\right), $ for $\beta\in \mbox{Lip}(\R)$,
 positive in $(0,1)$ and zero otherwise. We call  $M= \int_0^{1} \beta(s)\, ds$.

We are interested in studying the uniform properties of solutions
and understanding what happens in the limit as $\ep\to 0$. We
assume throughout the paper that the family $\{\uep\}$ is
uniformly bounded in $L^\infty$ norm.
 Our aim is to prove that, for every sequence $\ep_n\to 0$
there exists a subsequence $\ep_{n_k}$ and a function
 $u=\lim u^{\ep_{n_k}}$, and that $u$ is a weak
solution of the free boundary problem
\begin{equation}\label{bernoulli2}
\begin{cases}
\L u:= \mbox{div\,}\Big(\di\frac {g(|\nabla u|)}{|\nabla u|}\nabla u\Big)=0\quad&\mbox{in }\{u>0\}\cap\Omega\\
|\nabla u|=\lambda^*\quad&\mbox{on }\partial\{u>0\}\cap\Omega.\\
\end{cases}
\end{equation}
 for some constant
$\lambda^*$ depending on $g$ and $M$.

\bigskip

This problem appears in combustion theory in the case $\L=\Delta$
when studying deflagration flames.
  Back in 1938, Zeldovich and  Frank-Kamenetski  proposed the passage to the limit in this singular perturbation
  problem in \cite{ZF} (the limit for the activation energy going to infinity
    in this flame propagation model). The passage to the limit was not studied in a  mathematically rigorous way until
   1990 when
  Berestycki, Caffarelli and Nirenberg studied the case of $N$ dimensional traveling waves  (see
 \cite{BCN}). Later, in   \cite{CV},  the general evolution problem in the one phase case was considered. Much research
 has been done on this matter ever since. (See, for instance, \cite{CLW1, CLW2, LW1, W}).

 \eqref{bernoulli2} is a very well known
free boundary problem in the uniformly elliptic case ($0<c\le
g(t)/t\le C<\infty$). This problem  has also been studied in the
two phase case. Regularity results for the free boundary in the
case of the laplacian can be found in \cite{AC} for one phase
distributional solutions  and in \cite{C1, C2} for two phase
viscosity solutions. See also \cite{ACF} for one phase
distributional solutions in the nonlinear uniformly elliptic case.
The results in \cite{AC,C1,C2} were used in \cite{LW1} to obtain
free boundary regularity results for limit solutions (this is, for
$u=\lim\uepk$). See also \cite{CJK,LW2} for results in the
inhomogeneous case and \cite{CFS, FS} for viscosity solutions in
the variable coefficient case.

Recently, this singular perturbation problem in the case of the
$p$--laplacian ($g(t)=t^{p-1}$)
was considered in  \cite{DPS}. 
 As in the uniformly elliptic case, the authors find, for
a uniformly bounded family of solutions $u^{\ep}$, Lipschitz
estimates uniform in $\ep$  and prove that the limit of $u^{\ep}$
is a  solution of  \eqref{bernoulli2} for $\L=\Delta_p$ and
$\lambda^*=\big(\frac{p}{p-1} M\big)^{1/p}$ in a pointwise sense
at points in the reduced free boundary.



\bigskip

The aim of our present work is to study   this singular
perturbation problem --including the regularity of the free
boundary-- for operators that can be elliptic degenerate or
singular, possibly non homogeneous (the $p$--laplacian is
homogeneous and this fact simplifies some of the proofs).
Moreover, we admit functions  $g$ in the operator
 $\L$ with a different behavior at  0 and at infinity.
 Classically, the assumptions on the behavior of  $g$ at
0 and at infinity were  similar to the case of the $p$--laplacian.
Here, instead, we adopt the conditions introduced by G. Lieberman
in \cite{Li1} for the study of the regularity of weak solutions of
the elliptic equation
 (possibly degenerate or singular) $\L u=f$ with  $f$ bounded.

This condition ensures that the equation  $\L u=0$ is equivalent
to a uniformly elliptic equation in nondivergence form with
constants of ellipticity independent of the solution  $u$ in sets
where  $\nabla u\neq0$. Furthermore, this condition does not imply
any type of homogeneity on the function   $g$ and, moreover it
allows for a different behavior of  $g(|\nabla u|)$ when $|\nabla
u|$ is near zero or infinity. Precisely, we assume that  $g$
satisfies
\begin{equation}\label{cond}
0<\delta\le\frac{t g'(t)}{g(t)}\le g_0\ \ \ \ \forall t>0
\end{equation}
for certain constants  $0<\delta\le g_0$.

Let us observe that  $\delta=g_0=p-1$ when $g(t)=t^{p-1}$, and
reciprocally, if $\delta=g_0$ then  $g$ is a power.

Another example of a function   $g$ that satisfies \eqref{cond} is
the function
 $g(t)=t^a\mbox{log\,}(b t+c)$ with  $a, b, c>0$. In this case,
 \eqref{cond} is satisfied with   $\delta=a$ and $g_0=a+1$.

Another interesting case is the one of functions $g\in
C^1([0,\infty))$ with $g(t)=c_1t^{a_1}$ for  $t\le s$,
$g(t)=c_2t^{a_2}+d$ for $t\ge s$. In this case   $g$ satisfies
 \eqref{cond} with  $\delta=\mbox{min}(a_1,a_2)$ and
$g_0=\mbox{max}(a_1,a_2)$.

Furthermore, any linear combination with positive coefficients of
functions satisfying  \eqref{cond} also satisfies \eqref{cond}. On
the other hand, if  $g_1$ and $g_2$ satisfy  \eqref{cond} with
constants $\delta^i$ and $g_0^i$, $i=1,2$, the function $g=g_1g_2$
satisfies \eqref{cond} with $\delta=\delta^1+\delta^2$ and
$g_0=g_0^1+g_0^2$, and the function $g(t)=g_1\big(g_2(t)\big)$
satisfies \eqref{cond} with $\delta=\delta^1\delta^2$ and
$g_0=g_0^1g_0^2$.

This observation shows that there is a wide range of functions $g$
under the hypothesis of this work.

\bigskip

In this paper we show in this paper that the limit functions are
solutions of \eqref{bernoulli2} in the weak sense introduced in
\cite{MW1} where we proved that the reduced boundary of these weak
solutions is a $C^{1,\alpha}$ surface. This notion of weak
solution turns out to be very well suited for limit functions of
this singular perturbation problem.

\bigskip

We state here the definition of weak solution and the main results
in this paper.

\begin{defi}[Weak solution II in \cite{MW1}]\label{weak2}
We call $u$ a weak solution of \eqref{bernoulli2} if
\begin{enumerate}
\item $u$ is continuous and non-negative in $\Omega$ and
$\mathcal{L}u=0$ in $\Omega\cap\{u>0\}$. \item For
$D\subset\subset \Omega$ there are constants $0< c_{min}\leq
C_{max}$, $\gamma\ge1$, such that for balls $B_r(x)\subset D$ with
$x\in \partial \{u>0\}$
$$
c_{min}\leq \frac{1}{r}\Big(\pint_{B_r(x)} u^{\gamma} dx\,
\Big)^{1/\gamma}\leq C_{max}
$$
\item For $\mathcal{H}^{N-1}$ a.e $x_0\in\partial_{{red}}\{u>0\}$,
$u$ has the asymptotic development
$$
u(x)=\lambda^*\langle x-x_0,\nu(x_0)\rangle^-+o(|x-x_0|)
$$
where $\nu(x_0)$ is the unit interior normal to $\partial\{u>0\}$
at $x_0$ in the measure theoretic sense.

\item For every $ x_0\in \Omega\cap\partial\{u>0\}$,
\begin{align*}
& \limsup_{\stackrel{x\to x_0}{u(x)>0}} |\nabla u(x)| \leq
\lambda^*.
\end{align*}

If there is a ball $B\subset\{u=0\}$ touching
$\Omega\cap\partial\{u>0\}$ at $x_0$ then,
$$\limsup_{\stackrel{x\to x_0}{u(x)>0}} \frac{u(x)}{\mbox{dist}(x,B)}\geq  \lambda^*. $$
\end{enumerate}
\end{defi}


\bigskip

Our first result is a bound of $\|\nabla \uep\|_{L^\infty}$
independent of $\ep$.

\begin{teo}
Let $u^{\ep}$ be a solution of
$$\L u^{\ep}=\beta_{\ep}(u^{\ep}) \quad \mbox{ in } \Omega,$$ with $\|u^{\ep}\|_{L^{\infty}(\Omega)}\leq
L$.
  Then, for $\Omega'\subset\subset \Omega$ we have,
$$|\nabla u^{\ep}(x)|\leq C\quad \mbox{ in } \Omega'$$
with
$C=C(N,\delta,g_0,L,\|\beta\|_{\infty},g(1),\mbox{dist}(\Omega',\partial\Omega))$,
if $\ep\leq \ep_0(\Omega,\Omega')$.
\end{teo}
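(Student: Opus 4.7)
\emph{Strategy.} I would argue by contradiction, performing a blow-up that is compatible with the Lieberman structural condition~\eqref{cond}. Suppose the thesis fails, so there exist $\Omega''\subset\subset\Omega'\subset\subset\Omega$, sequences $\ep_k\downarrow 0$ and $x_k\in\Omega''$ with $B_k:=|\nabla u^{\ep_k}(x_k)|\to\infty$. A standard doubling lemma applied to $|\nabla u^{\ep_k}|$ produces new points $y_k$ and radii $\rho_k\to 0$ with $A_k:=|\nabla u^{\ep_k}(y_k)|\ge B_k$, $\sup_{B_{\rho_k}(y_k)}|\nabla u^{\ep_k}|\le 2A_k$, and $\rho_k A_k\to\infty$; since $\ep_k\to 0$, $R_k:=\rho_k A_k/\ep_k$ then also tends to $\infty$.

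\emph{Scale dichotomy and rescaling.} I would then distinguish according to the ratio $u^{\ep_k}(y_k)/\ep_k$. If $u^{\ep_k}(y_k)\ge C_0\ep_k$ for a large $C_0$, the Lipschitz bound $|\nabla u^{\ep_k}|\le 2A_k$ on $B_{\rho_k}(y_k)$ shows $u^{\ep_k}>\ep_k$ on $B_{\sigma_k}(y_k)$, $\sigma_k:=u^{\ep_k}(y_k)/(4A_k)$, so $\L u^{\ep_k}=0$ there and Lieberman's interior gradient estimate~\cite{Li1} yields $A_k\le CL/\sigma_k$, i.e.\ $u^{\ep_k}(y_k)\le CL$, which for small $\ep_k$ drops us back into the complementary regime $u^{\ep_k}(y_k)\le C_0\ep_k$. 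In that regime, setting $r_k:=\ep_k/A_k$, $v_k(z):=u^{\ep_k}(y_k+r_k z)/\ep_k$ and $\tilde g_k(t):=g(A_kt)/g(A_k)$, one checks that $\tilde g_k$ still satisfies~\eqref{cond} with the same constants $\delta, g_0$, and that $v_k$ solves
\begin{equation*}
\tilde{\L}_k v_k\;=\;\frac{1}{A_k\,g(A_k)}\,\beta(v_k)\qquad\text{in }B_{R_k}(0),
\end{equation*}
with $v_k\ge 0$, $v_k(0)\le C_0$, $|\nabla v_k(0)|=1$ and $|\nabla v_k|\le 2$ on $B_{R_k}(0)$. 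Since \eqref{cond} gives $g(A_k)\ge g(1)\,A_k^{\delta}$, the right-hand side tends to $0$ in $L^\infty$.

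\emph{Passage to the limit and Liouville.} Using Lieberman's interior $C^{1,\alpha}$ estimates~\cite{Li1}, whose constants depend only on $N,\delta,g_0$, I would extract a subsequence $v_k\to v_\infty$ in $C^{1,\beta}_{\rm loc}(\R^N)$ and pass to the limit in the operator to produce a nonnegative, $2$-Lipschitz entire solution of $\tilde{\L}_\infty v_\infty=0$ with $|\nabla v_\infty(0)|=1$. A Liouville-type statement (via Lieberman's Harnack on $B_{2R}$ with $R\to\infty$, combined with nonnegativity and the linear growth coming from the Lipschitz bound) then forces $v_\infty$ to be constant, a contradiction. The main difficulty I anticipate is exactly this Liouville step: because~\eqref{cond} is not a homogeneity condition, $\tilde{\L}_\infty$ admits no explicit class of entire solutions, so one cannot appeal to any scaling symmetry and must work solely through the structural conditions, Harnack, and linearization where $\nabla v_\infty\neq 0$. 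A subsidiary point is that the doubling scale $\rho_k$ must be chosen so that $\rho_k\to 0$ while $\rho_k A_k/\ep_k\to\infty$; this is the source of the threshold $\ep_0(\Omega,\Omega')$ appearing in the statement.
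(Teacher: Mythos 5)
Your proposal is a genuinely different route from the paper's. The paper avoids any compactness/Liouville machinery: it establishes a linear growth bound $u^{\ep}(x)\le \ep + C\,\mathrm{dist}(x,\{u^\ep\le\ep\})$ via an explicit barrier in an annulus (their Lemma on linear growth, proved by comparison with $\psi(x)=c_2m_0(e^{-\mu|x|^2}-e^{-\mu\delta_0^2})$), and then splits into $u^\ep(x_0)\le 2\ep$ (rescale at scale $\ep$, apply Harnack and Lieberman's gradient estimate) versus $u^\ep(x_0)>\ep$ (rescale at scale $\delta_0=\mathrm{dist}(x_0,\{u^\ep\le\ep\})$, where the equation is $\L u^\ep=0$, using the linear growth bound to control $\sup_{B_{\delta_0}}u^\ep$). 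Your blow-up/doubling approach is more in the contradiction-compactness spirit and, as a bonus, the Liouville step you flagged as the main worry actually goes through cleanly: for a nonnegative entire solution of $\tilde{\L}_\infty v=0$, Lieberman's Harnack inequality (whose constant is scale-invariant and depends only on $N,\delta,g_0$) applied on $B_{2R}$ as $R\to\infty$ gives $\sup_{\R^N}v_\infty\le C_H\,v_\infty(0)$, so $v_\infty$ is bounded; interior $C^{1,\alpha}$ oscillation decay then forces $v_\infty$ constant, contradicting $|\nabla v_\infty(0)|=1$. The rescaled operators $\tilde g_k(t)=g(A_kt)/g(A_k)$ indeed satisfy \eqref{cond} with the same $\delta,g_0$ and have $\tilde g_k(1)=1$, so all of Lieberman's constants are uniform, and the right-hand side vanishes since $g(A_k)\ge g(1)A_k^\delta$.

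There is, however, a concrete gap in your Case~1, and it is precisely the point where the paper's linear growth lemma does the work. From $u^{\ep_k}(y_k)\ge C_0\ep_k$, the Lipschitz bound gives $B_{\sigma_k}(y_k)\subset\{u^{\ep_k}>\ep_k\}$ with $\sigma_k=u^{\ep_k}(y_k)/(4A_k)$, and Lieberman's gradient estimate there yields $A_k\le CL/\sigma_k$, which rearranges to $u^{\ep_k}(y_k)\le 4CL$. But $u^{\ep_k}(y_k)\le 4CL$ is a fixed $O(1)$ bound; it in no way forces $u^{\ep_k}(y_k)\le C_0\ep_k$ when $\ep_k\to0$, so the claim that Case~1 "drops back into the complementary regime" is a non sequitur, and in Case~1 one has obtained neither a bound on $A_k$ nor the hypotheses needed for the Case~2 blow-up (your rescaling $v_k(z)=u^{\ep_k}(y_k+r_kz)/\ep_k$ requires $v_k(0)=u^{\ep_k}(y_k)/\ep_k$ to be bounded, which Case~1 does not provide). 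To close the gap you would need to show $u^{\ep_k}(y_k)-\ep_k\lesssim d_k:=\mathrm{dist}(y_k,\{u^{\ep_k}\le\ep_k\})$; combined with the gradient estimate $A_k\le C\sup_{B_{d_k}(y_k)}u^{\ep_k}/d_k$ and Harnack for $u^{\ep_k}-\ep_k$ on $B_{d_k}(y_k)$, this gives $A_k\le C$. That linear growth bound is not free: in the paper it is proved by a barrier comparison near $\partial\{u^\ep>\ep\}$ together with Harnack at the unit scale $\ep$, and some version of it seems unavoidable in either approach.
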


Then we have, via a subsequence, that there exists a limiting
function $u$.

The next step is to prove that the function  $u$ is a weak
solution in the sense of Definition \ref{weak2} of the free
boundary problem \eqref{bernoulli2} for a constant $\lambda^*$
depending on $g$ and $M$. To this end, we have to prove that $\L
u=0$ in $\{u>0\}$ and that we have an asymptotic development for
$u$ at any point on the reduced free boundary.

Here we find several technical difficulties associated to the loss
of homogeneity of the operator $\L$ and to the fact that we are
working in an Orlicz space. This is the case, for instance when we
need to prove the pointwise convergence of the gradients.


At some point  we need  to add the following hypothesis on   $g$:

{ There exists $\eta_0>0$ such that,
\begin{equation}\label{condderivadagintro2}
g'(t)\leq s^2 g'(ts)  \quad \mbox{if}\quad 1\leq s\leq 1+\eta_0
\quad \mbox{ and }\quad 0<t\le
\Phi^{-1}\Big(\frac{g_0}{\delta}M\Big),
\end{equation}}
where $\Phi(\lambda)=\lambda g(\lambda)-G(\lambda)$.

We remark that condition \eqref{condderivadagintro2} holds for all
the examples of functions satisfying condition \eqref{cond}
described above (see Section 4).

%

There holds,

\begin{teo}
Suppose that $g$ satisfies \eqref{cond} and
\eqref{condderivadagintro2}. Let $u^{\ep_j}$ be a solution to
$(P_{\ep_j})$ in a domain $\Omega\subset \mathbb{R}^N$ such that
$u^{\ep_j}\rightarrow u$ uniformly on compact subsets of $\Omega$
and $\ep_j\to 0$. Let $x_0\in \Omega \cap\partial\{u>0\}$ be such
that $\partial\{u>0\}$ has an inward unit normal $\nu$ in the
measure theoretic sense at $x_0$, and suppose that $u$ is
non-degenerate at $x_0$ (see Definition  \ref{nondegener}). Under
these assumptions, we have
$$u(x)=\Phi^{-1}(M) \langle x-x_0,\nu\rangle^+ +o(|x-x_0|)$$
where $\Phi(\lambda)=\lambda g(\lambda)-G(\lambda)$.
\end{teo}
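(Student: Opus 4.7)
The plan is a two-scale blow-up argument. Fix any sequence $\rho_k\downarrow 0$ and set $u_k(x):=\rho_k^{-1}u(x_0+\rho_k x)$. By Theorem 1.1 the family $\{u^{\ep_j}\}$ is locally uniformly Lipschitz, so the limit $u$ is Lipschitz near $x_0$ and $\{u_k\}$ is locally uniformly bounded and Lipschitz. Extract a subsequence $u_k\to u_0$ locally uniformly. The nondegeneracy at $x_0$ forces $u_0\not\equiv 0$, while the measure-theoretic normal hypothesis implies $u_0\equiv 0$ on $\{\langle x,\nu\rangle<0\}$ and $\{u_0>0\}\subseteq\{\langle x,\nu\rangle>0\}$.

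I would then diagonalize. For each $k$, pick $j_k$ so that $\tilde\ep_k:=\ep_{j_k}/\rho_k\to 0$ and $u_k^{\ep_{j_k}}(x):=\rho_k^{-1}u^{\ep_{j_k}}(x_0+\rho_k x)$ still converges locally uniformly to $u_0$. A direct computation (exploiting that the $1$-homogeneous rescaling carries $\L u=\beta_\ep(u)$ into $\L u_k=\rho_k\,\beta_\ep(\rho_k u_k)=\beta_{\ep/\rho_k}(u_k)$) shows that $u_k^{\ep_{j_k}}$ is a solution of $(P_{\tilde\ep_k})$. Thus $u_0$ is itself a limit of singular perturbation solutions with parameter $\to 0$. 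Since $\L u_0=0$ in $\{u_0>0\}$, $u_0$ is Lipschitz and nonnegative, and $\{u_0>0\}$ is contained in a half-space, a Liouville-type argument for $\L$ yields $u_0(x)=\alpha\langle x,\nu\rangle^+$ for some $\alpha>0$.

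The decisive step is the identification $\alpha=\Phi^{-1}(M)$. Since $u_0$ depends only on $s:=\langle x,\nu\rangle$, slicing $u_k^{\ep_{j_k}}$ in the $\nu$-direction and rescaling at the $\tilde\ep_k$-scale produces a one-dimensional limit profile $V:\R\to[0,\infty)$, solving the ODE $(g(V'))'=\beta(V)$ on $\R$ with $V(-\infty)=0$ and $V'(+\infty)=\alpha$. Multiplying by $V'$ and using the identity $(g(V')V'-G(V'))'=(g(V'))'V'$, one gets
$$\Phi(\alpha)=\int_{\R}\bigl(g(V')V'-G(V')\bigr)'\,ds=\int_{\R}\beta(V)V'\,ds=\int_0^\infty\beta(\tau)\,d\tau=M,$$
so $\alpha=\Phi^{-1}(M)$, well-defined because $\Phi$ is strictly increasing under \eqref{cond}. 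Since $\alpha$ is independent of the blow-up sequence $\rho_k$, the asymptotic development follows.

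The main obstacle is passing to the limit in the nonlinear operator $\L$: one needs the a.e.\ convergence $\nabla u_k^{\ep_{j_k}}\to\nabla u_0$ in order to close both the Liouville classification and the ODE identification of $V$. Because $g$ is not homogeneous and $\L$ lacks the scale invariance of $\Delta_p$, the usual Minty/monotonicity tricks are delicate. Here condition \eqref{condderivadagintro2}, which controls $g'$ under small dilations on the relevant range $0<t\le\Phi^{-1}\bigl(\tfrac{g_0}{\delta}M\bigr)$, is precisely what makes the gradient convergence argument go through, and hence underlies the identification of the slope $\alpha$.
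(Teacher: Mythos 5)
Your rescaling observation is correct (the $1$-homogeneous blow-up $u_k(x)=\rho_k^{-1}u(x_0+\rho_k x)$ carries $(P_\ep)$ into $(P_{\ep/\rho_k})$), and the diagonalization across scales is exactly Lemma \ref{bolwepj} in the paper. But two central steps in your proposal are left as claims that in fact carry the whole weight of the proof, and the third point misattributes the role of hypothesis \eqref{condderivadagintro2}.

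First, the ``Liouville-type argument'' asserting $u_0(x)=\alpha\langle x,\nu\rangle^+$ is not available at the stage where you invoke it. What you actually know about the blow-up limit $u_0$ is only that it is Lipschitz, nonnegative, vanishes on $\{\langle x,\nu\rangle\le 0\}$ and satisfies $\L u_0=0$ in $\{u_0>0\}$; nothing rules out a nontrivial free boundary of $u_0$ inside $\{\langle x,\nu\rangle>0\}$. The paper resolves this in two stages: Lemma \ref{development11} (a boundary-Harnack / dichotomy argument for $\L$-solutions vanishing on a hyperplane) gives only the \emph{asymptotic} development $U=\alpha x_1^++o(|x|)$; then one blows up a second time, applies Lemma \ref{bolwepj} again, and identifies $\alpha$ for the double blow-up $\alpha x_1^+$. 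The global equality $U=\alpha x_1^+$ is recovered only at the very end, using the gradient bound $|\nabla U|\le\Phi^{-1}(M)$ from Theorem \ref{anterior} together with Hopf's lemma and the strong maximum principle. Your argument skips the asymptotic-development step and the need for the a-priori gradient bound, which is precisely where the work is.

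Second, the slope identification via a one-dimensional ODE profile $V$ with $(g(V'))'=\beta(V)$ is heuristic: it presupposes that the $\ep$-scale blow-up of $u_k^{\ep_{j_k}}$ near the free boundary is one-dimensional, but this $1$D structure is part of what needs to be proved. The paper avoids this entirely by using the Pohozaev/domain-variation identity of Lemma \ref{sale}: testing the weak formulation with $\psi\, u_{x_1}^{\ep}$ produces, after passing to the limit with the strong gradient convergence of Proposition \ref{nose}, the flux identity $\Phi(\alpha)=M-\overline M$, and the nondegeneracy hypothesis then forces $\overline M=0$. This identity argument works directly in $N$ dimensions and is what actually closes the slope identification.

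Finally, the claim that \eqref{condderivadagintro2} ``is precisely what makes the gradient convergence argument go through'' is incorrect. The strong convergence $\nabla u^{\ep}\to\nabla u$ in $L^{g_0+1}_{loc}$ (Proposition \ref{nose}) needs only \eqref{cond}, via a Minty-type monotonicity argument. Hypothesis \eqref{condderivadagintro2} enters instead in the comparison Lemma \ref{compara}, which is used in Proposition \ref{gammapositiva} to rule out two-half-plane limits $\alpha x_1^++\gamma x_1^-$ with $\alpha>\Phi^{-1}(M)$; this in turn underlies the gradient bound of Theorem \ref{anterior}. Without that comparison step, your proposal cannot even establish $|\nabla U|\le\Phi^{-1}(M)$, which is needed to upgrade the asymptotic development to the exact half-plane profile.
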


 Finally, we can apply the theory developed in
\cite{MW1}. We have that $u$ is a weak solution in the sense of
Definition \ref{weak2} of the free boundary problem.

Then, we have the following,

\begin{teo}\label{teointro}
Suppose that  ${g}$ satisfies \eqref{cond} and
\eqref{condderivadagintro2}. Let $u^{\ep_j}$ be a solution of
$(P_{\ep_j})$ in a domain $\Omega\subset\mathbb{R}^N$ such that
$u^{\ep_j}\rightarrow u$ uniformly in compact subsets of $\Omega$
as $\ep_j\to 0$. Let $x_0\in \Omega\cap\partial\{u>0\}$ such that
there is a unit inward normal   $\nu$ to
$\Omega\cap\partial\{u>0\}$ in the measure theoretic sense at
$x_0$. Suppose that
 $u$ is uniformly non-degenerate at the free boundary in a neighborhood of $x_0$
 (see Definition  \ref{nondegener}).
Then, there exists   $r>0$ such that $B_r(x_0)\cap\partial\{u>0\}$
is a  $C^{1,\alpha}$ surface.
\end{teo}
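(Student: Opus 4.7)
The strategy is to reduce the statement to the $C^{1,\alpha}$ regularity theorem for weak solutions proved in \cite{MW1}. To that end, I would verify that the limit function $u$ fulfils all four items of Definition \ref{weak2} in a neighborhood $U$ of $x_0$, and then apply the regularity result of \cite{MW1} to the reduced free boundary of $u$ in $U$.

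For item (1), continuity and nonnegativity of $u$ are immediate from uniform convergence of the $u^{\ep_j}$. The identity $\L u = 0$ in $\{u>0\}\cap\Omega$ follows from the fact that $\beta_{\ep_j}(u^{\ep_j})\equiv 0$ on any compact subset of $\{u>0\}$ once $j$ is large (by uniform convergence and strict positivity), combined with the pointwise a.e.\ convergence of the gradients $\nabla u^{\ep_j}\to\nabla u$ established earlier in the paper (which is the step where hypothesis \eqref{condderivadagintro2} enters). Passing to the limit in the weak formulation \eqref{ecsin} then gives the equation. For item (2), the upper bound on the averaged integrals of $u$ follows from the uniform Lipschitz estimate of the first theorem of the excerpt (applied to $u^{\ep_j}$ and passed to the limit), while the lower bound $c_{min}$ is exactly the standing uniform non-degeneracy assumption on $u$ in the neighborhood $U$. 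Item (3) is precisely the content of the preceding theorem, with $\lambda^*=\Phi^{-1}(M)$: at any point $x_0$ of the reduced free boundary where $u$ is non-degenerate we have the one-sided asymptotic development with slope $\lambda^*$.

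The main obstacle is item (4), namely the two pointwise $\limsup$ bounds at \emph{every} free boundary point of $U$ (not just at $\mathcal{H}^{N-1}$-a.e.\ reduced point). For the upper bound $\limsup_{x\to x_0,\,u(x)>0}|\nabla u(x)|\le \lambda^*$, I would argue by a blow-up and compactness argument: rescale $u$ around a sequence of interior points $x_n\to x_0$, use the uniform Lipschitz estimate together with the compactness of solutions of $\L v=0$ to extract a global limit, and conclude by a maximum-principle/comparison argument against the half-plane solution $\lambda^*\langle x,\nu\rangle^+$ built from the previous theorem. For the touching-ball lower bound, I would construct an explicit radial subsolution of $\L w=0$ based on the distance to the touching ball $B\subset\{u=0\}$, normalized so that $|\nabla w|\to \lambda^*$ at $\partial B$; passing to the limit from analogous subsolutions for $u^{\ep_j}$ and invoking comparison in the annular region between $B$ and a slightly larger ball yields the required lower bound.

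Once items (1)--(4) are in hand, $u$ is a weak solution of \eqref{bernoulli2} in $U$ in the sense of Definition \ref{weak2}, and Theorem \ref{teointro} is an immediate consequence of the regularity result of \cite{MW1} for such weak solutions, which gives $C^{1,\alpha}$ regularity of $B_r(x_0)\cap\partial_{\mathrm{red}}\{u>0\}$ for some $r>0$; uniform non-degeneracy in $U$ further implies that the reduced boundary coincides with the topological boundary in a possibly smaller ball, concluding the proof.
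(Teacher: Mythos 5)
Your overall strategy matches the paper's proof exactly: the theorem is indeed proved by checking that the limit $u$ satisfies the four items of Definition~\ref{weak2} (item (1) from Proposition~\ref{nose}, item (2) from the uniform Lipschitz bound of Corollary~\ref{gradsing} plus non-degeneracy, item (3) from Theorem~\ref{teoasim}, item (4) from Theorems~\ref{anterior} and~\ref{limsup4}) and then invoking Theorem~9.4 of \cite{MW1}. Your sketch of the gradient bound in item~(4) via blow-up is in the spirit of Theorem~\ref{anterior}, although the paper's actual argument classifies the blow-up limit first as a global solution linear in a half-space and then reduces to the two-plane classification of Propositions~\ref{alphamenorphiM} and~\ref{gammapositiva}, rather than directly comparing against the half-plane $\lambda^*\langle x,\nu\rangle^+$. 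Where you genuinely diverge is the touching-ball lower bound: you propose an explicit radial barrier in the annulus around the tangent ball $B$, whereas the paper (Theorem~\ref{limsup4}) again blows up $u$ at the contact point, shows the blow-up limit is exactly $\ell\langle x,e\rangle^+$ by the Hopf/strong-maximum argument in the non-degenerate region, and then applies Proposition~\ref{alphamenorphiM} to identify $\ell=\Phi^{-1}(M)$. Your barrier route is classical and could be made to work, but it requires more care than you indicate: one must control $u^{\ep_j}$ on the outer sphere of the annulus, which in practice needs the non-degeneracy already in hand, and the construction essentially re-derives what the blow-up approach gets for free from Lemma~\ref{bolwepj} and the half-plane classification. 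Finally, the closing remark that ``uniform non-degeneracy implies that the reduced boundary coincides with the topological boundary'' is not quite what is going on: the passage from reduced to full $\partial\{u>0\}$ regularity near a reduced point is itself part of the \cite{MW1} regularity theory rather than a one-line consequence of non-degeneracy.
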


{ Finally, we give two examples in which we can apply the
regularity results in this paper. In both examples  the
nondegeneracy property  is satisfied by the limiting function $u$.
In the first example the limiting function is obtained by taking a
sequence of minimal solutions of $(P_{\ep})$ (see Definition
\ref{minimal}) . In the second one, by taking a sequence of
minimizers of the functional
$$J_{\ep}(v)= \int_{\Omega} [G(|\nabla v|) +  B_{\ep}( v)]\,
dx$$ where $B_{\ep}'(s)=\beta_{\ep}(s)$
 (see Section 7).

 Moreover, in the second example
we have that $\H(\partial\{ u>0\}\setminus
\partial_{red}\{u>0\})=0$. Thus, in this case the set of singular points has zero $\H-$measure.


 We
also have --since the limiting function is a minimizer of the
problem considered in \cite{MW1}-- that in the case of minimizers
we don't need to add any new hypothesis  to the function $g$. This
is, the result holds for functions $g$ satisfying only condition
\eqref{cond}. And, in dimension 2 if we add to condition
\eqref{cond}  that,
\begin{equation}\label{condising}
\mbox{There exist  constants }  t_0>0 \mbox{ and } k>0 \mbox{ so
that } g(t)\leq k t \mbox{ for } t\leq t_0.
\end{equation}
then, we have that the whole free boundary is a regular surface
(see Corollary 2.2 in \cite{M1}).

\bigskip}

\subsection*{Outline of the paper}

The paper is organized as follows: In Section 3 we prove the
uniform Lipschitz continuity of solutions of \eqref{ecsing}
(Corollary \ref{gradsing}).

 In
Section 4 we prove that if $u$ is a limiting function, then $\L u$
is a Radon measure supported on the free boundary (Theorem
\ref{nose}). Then we prove
 Proposition \ref{alphamenorphiM}, that says that if $u$
is a half plane, then the slope is 0 or  $\Phi^{-1}(M)$, and
Proposition \ref{gammapositiva} that says  that if $u$ is a sum of
two half planes, then the slopes must be equal and at most
$\Phi^{-1}(M)$.

In Section 5 we prove the  asymptotic development of $u$ at points
in the reduced free boundary (Theorem \ref{teoasim}) and we prove
that $u$ is a weak solution according to Definition \ref{weak2}.

 In section 6 we apply the results of \cite{MW1} to prove the regularity of the free
boundary (Theorem \ref{regularidadsing}).

{In Section 7 we give two examples where the limiting function
satisfies the nondegeneracy property. The first one is given by
the limit of minimal solutions (Theorem \ref{ejemplo1}) and the
second one is given by the limit of energy minimizers (Theorem
\ref{ejemplo2}).}

In the Appendices we state some properties of the function $g$ and
we  prove the asymptotic development of $\L$--subsolutions.
%
%
%
%

\section{Notation}

Throughout the paper $N$ will denote the dimension and,
\begin{align*}B_r(x)&=\{ x\in \R^N, |x-x_0|<r\},\\
B^+_r(x)&=\{ x\in \R^N, x_N> 0,\ |x-x_0|<r\},\\
B^-_r(x)&=\{ x\in \R^N, x_N< 0,\ |x-x_0|<r\}.
\end{align*}

For $v,w\in \R^N$, $\langle v,w\rangle$ denotes the standard
scalar product.

For a scalar function $f$, $f^+=\max({f,0})$ and
$f^-=\max({-f,0})$.

Furthermore, we denote
\begin{align*} G(t)&=\int_0^t g(s)\,ds,\\
F(t)&=g(t)/t,\\ \Phi(t)&=g(t)t-G(t),
\\  A(p)&=F(|p|) p\quad \mbox{ for } p\in \R^N,\\
a_{ij}&=\frac{\partial A_i}{\partial p_j}\quad \ \ \ \ \mbox{ for
} 1\leq i,j\leq N.
\end{align*}

We denote by $L^G(\Omega)$ the  Orlicz space that is the linear
hull of the set of measurable functions such that $\int_\Omega
G(|u|)\,dx<\infty$ with the norm of Luxemburg. This is,
$$
\|u\|_{L^G(\Omega)}=\inf\Big\{\lambda>0\,/\,\int_\Omega
G\Big(\frac{|u|}\lambda\Big)\,dx\le 1\Big\}.
$$

 The set $W^{1,G}(\Omega)$ is the Sobolev Orlicz space of functions in $W^{1,1}_{loc}(\Omega)$ such that
 both $\|u\|_{L^G(\Omega)}$ and $\||\nabla u|\|_{L^G(\Omega)}$ are finite equipped, with the norm
 $$
\|u\|_{W^{1,G}(\Omega)}=
\mbox{max\,}\big\{\|u\|_{L^G(\Omega)},\||\nabla
u|\|_{L^G(\Omega)}\big\}.
$$

\bigskip

\section{Uniform bound of the gradient}
\setcounter{equation}{0}

We begin by proving that solutions of the perturbation problem are
locally uniformly Lipschitz. That is, the $u^{\ep}$'s are locally
Lipschitz, and the Lipschitz constant is bounded independently of
$\ep$. In order to prove this result, we first need to prove a
couple of lemmas.

\begin{lema}\label{umenor2ep}
Let $u^{\ep}$ be a solution of
$$\L u^{\ep}=\beta_{\ep}(u^{\ep})\quad \mbox{ in } B_{r_0}(x_0)$$
such that $u^{\ep}(x_0)\leq 2\ep$. Then, there exists
$C=C(N,r_0,\delta,g_0,\|\beta\|_{\infty},g(1))$ such that, if
$\ep\leq 1$,
$$|\nabla u^{\ep}(x_0)|\leq C.$$
\end{lema}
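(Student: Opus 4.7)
The natural strategy is to rescale away the singular parameter $\ep$. Define
$$
v(y) = \frac{1}{\ep}\, u^\ep(x_0 + \ep y), \qquad y \in B_{r_0/\ep}(0).
$$
Since $\nabla_x u^\ep(x_0 + \ep y) = \nabla_y v(y)$, the operator $\L$ is invariant under this scaling, and using $\beta_\ep(s) = \frac{1}{\ep}\beta(s/\ep)$ a direct computation turns the equation into
$$
\L v(y) = \beta\bigl(v(y)\bigr) \quad \text{in } B_{r_0/\ep}(0),
$$
with $v \ge 0$, $v(0) \le 2$, and right-hand side bounded by $\|\beta\|_\infty$. Because $\ep \le 1$ we have $B_{r_0/\ep}(0) \supset B_{r_0}(0)$, so from now on everything takes place on a fixed domain independent of $\ep$, and it suffices to bound $|\nabla v(0)|$.

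The first step on the rescaled side is to obtain an $L^\infty$ bound for $v$ on a smaller ball $B_{r_0/2}(0)$ depending only on $v(0)$ and the allowed parameters. This is exactly the Harnack inequality for nonnegative solutions of $\L v = f$ with bounded $f$, which is available for operators in Lieberman's class \cite{Li1} and yields
$$
\sup_{B_{r_0/2}(0)} v \le C\bigl(v(0) + \|\beta\|_\infty\bigr) \le C',
$$
where $C'$ depends on $N, r_0, \delta, g_0, g(1)$ and $\|\beta\|_\infty$. Here the presence of $g(1)$ reflects the scale normalization of $g$ that is not fixed by condition \eqref{cond} alone.

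Next, with $v$ locally bounded and $\L v = \beta(v) \in L^\infty$, Lieberman's interior $C^{1,\alpha}$ (really, Lipschitz) estimate for $\L v = f$ with bounded $f$ applies on $B_{r_0/2}(0)$ and gives
$$
|\nabla v(0)| \le C\bigl(N, r_0, \delta, g_0, g(1), \|v\|_{L^\infty(B_{r_0/2}(0))}, \|\beta\|_\infty\bigr).
$$
Combined with the previous step, this yields a bound $|\nabla v(0)| \le C$ with the required dependence. Undoing the rescaling and noting $\nabla u^\ep(x_0) = \nabla v(0)$ finishes the proof.

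\textbf{Main obstacle.} Nothing is genuinely hard once the rescaling is in place; the subtlety is to ensure that Harnack and the gradient estimate apply with constants that depend only on $(N, \delta, g_0, g(1))$ and not on $\ep$ or on the particular $u^\ep$. This is precisely the content of Lieberman's structural condition \eqref{cond} together with the normalization $g(1)$, so the bulk of the proof amounts to invoking those results on the rescaled problem.
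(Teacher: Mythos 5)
Your proposal is correct and follows essentially the same route as the paper: rescale $v(y)=\ep^{-1}u^\ep(x_0+\ep y)$ to get $\L v=\beta(v)$ with $v(0)\le 2$ on a domain of uniform size since $\ep\le 1$, apply Harnack's inequality from \cite{Li1} to bound $\|v\|_{L^\infty(B_{r_0/2})}$, and then invoke Lieberman's interior gradient estimate to bound $|\nabla v(0)|=|\nabla u^\ep(x_0)|$. Your extra remarks about why the constants are $\ep$-independent and why $g(1)$ enters are accurate but merely spell out what the paper leaves implicit.
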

\begin{proof}
Let $v(x)=\frac{1}{\ep} u^{\ep}(x_0+\ep x)$. Then if $\ep\leq1$,
$\L v=\beta(v)$ in $B_{r_0}$ and $v(0)\leq 2$. By Harnack's
inequality (see \cite{Li1}) we have that $0\leq v(x)\leq C_1$ in
$B_{r_0/2}$ with $C_1=C_1(N,g_0,\delta,\|\beta\|_{\infty})$.
Therefore, by using the derivative estimates of \cite{Li1}  we
have that
$$|\nabla \uep(x_0)|=|\nabla v(0)|\leq C$$ with $C=C(N,\delta,g_0,\|\beta\|_{\infty},r_0,g(1)).$
\end{proof}

\begin{lema}\label{uep=ep}
Let $u^{\ep}$ be a solution of
$$\L u^{\ep}=\beta_{\ep}(u^{\ep}) \quad \mbox{ in } B_{1},$$
and $0\in\partial\{u^{\ep}>\ep\}$. Then, for $x\in
B_{1/4}\cap\{u^\ep>\ep\}$,
$$u^{\ep}(x)\leq \ep+C\mbox{dist}(x,\{u^{\ep}\leq \ep\}\cap B_1),$$ with $C=C(N,\delta,g_0,\|\beta\|_{\infty},g(1))$.
\end{lema}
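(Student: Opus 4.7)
My plan is to combine the gradient bound of Lemma \ref{umenor2ep}, which controls $|\nabla u^\ep|$ wherever $u^\ep\leq 2\ep$, with the interior estimates of Lieberman \cite{Li1} for $\L$-harmonic functions on the ball $B_d(x)\subset\{u^\ep>\ep\}$.

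Fix $x\in B_{1/4}\cap\{u^\ep>\ep\}$ and let $d=\mathrm{dist}(x,\{u^\ep\leq\ep\}\cap B_1)$. By continuity of $u^\ep$ and the definition of $d$, there is a point $y_0\in \partial B_d(x)$ with $u^\ep(y_0)=\ep$; moreover, $v:=u^\ep-\ep$ is non-negative and $\L$-harmonic on $B_d(x)$ since $\beta_\ep\equiv 0$ on $\{u^\ep>\ep\}$. Parametrize the segment $\gamma(t)=y_0+t(x-y_0)/d$ for $t\in[0,d]$. I would then split into two cases depending on the size of $u^\ep(x)$. In the easy case $u^\ep(x)\le 2\ep$, I let $t^*$ be the largest $t$ such that $u^\ep\le 2\ep$ on $\gamma([0,t])$, and I would apply Lemma \ref{umenor2ep} at every $\gamma(t)$ with $t\le t^*$ (the hypothesis $B_{r_0}(\gamma(t))\subset B_1$ is satisfied with a fixed $r_0$ because $|\gamma(t)|\leq|x|+d\leq 1/4+d$). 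Integrating the uniform bound $|\nabla u^\ep(\gamma(t))|\le C_1$ along $\gamma$ up to $t=d$ yields $u^\ep(x)-\ep\le C_1 d$.

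In the harder case $u^\ep(x)>2\ep$, let $z_0=\gamma(t_0)$ be the first crossing of the level $u^\ep=2\ep$ along $\gamma$ starting from $y_0$. On $[y_0,z_0]$ we have $u^\ep\leq 2\ep$, so Lemma \ref{umenor2ep} gives $|\nabla u^\ep|\leq C_1$, and integration yields $\ep=u^\ep(z_0)-u^\ep(y_0)\le C_1|z_0-y_0|$, i.e.\ $|z_0-y_0|\ge\ep/C_1$. On the complementary segment $[z_0,x]\subset B_d(x)$, I exploit the $\L$-harmonicity of $v$: rescale by $w(y)=v(x+dy)/M$ where $M=v(x)$, so that $w$ is $\tilde\L$-harmonic on $B_1$ with $\tilde g(s)=g((M/d)s)$ satisfying \eqref{cond} with the same structural constants $\delta$ and $g_0$. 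Harnack's inequality and Lieberman's $C^{1,\alpha}$ estimates apply with universal constants, and in particular Harnack on $B_{1/2}$ gives $w\leq Cw(0)=C$ on $B_{1/2}$, so that by choosing the point $(z_0-x)/d\in B_{1/2}$ (which follows from $|x-z_0|=d-|z_0-y_0|\le d-\ep/C_1$, together with the subcase split $d\le 2\ep/C_1$ versus $d>2\ep/C_1$), we deduce $M=v(x)\le C v(z_0)=C\ep\le C C_1 d$.

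The principal obstacle is the absence of any a priori $L^\infty$ bound on $v$ in the statement, which forbids a direct application of boundary Lipschitz estimates for $\L$-harmonic functions. I circumvent this by always comparing $v$ to its value at the boundary level $u^\ep=\ep$, using Lemma \ref{umenor2ep} as the sole source of a dimension-free gradient bound that does not see the oscillation of $v$ far from the free boundary, and by invoking Harnack on the rescaled $w$ only to transfer information between $x$ and the nearby point $z_0$ at which $v=\ep$.
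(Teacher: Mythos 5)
Your strategy departs from the paper's: the paper runs a barrier/Hopf argument on the annulus $B_{\delta_0}(x_0)\setminus B_{\delta_0/2}(x_0)$, comparing $u^\ep-\ep$ with the explicit exponential subsolution $\psi=c_2 m_0(e^{-\mu|x-x_0|^2}-e^{-\mu\delta_0^2})$ and reading off the estimate from $|\nabla\psi(y_0)|\le|\nabla u^\ep(y_0)|$, while you try to avoid the barrier entirely and instead track the level set $\{u^\ep=2\ep\}$ and use interior Harnack. This would be a genuinely different and more elementary route if it worked, but there is a gap.

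Your easy case $u^\ep(x)\le 2\ep$ is fine, modulo a small patch: if $t^*<d$ then $u^\ep(\gamma(t^*))=2\ep$, so integrating on $[0,t^*]$ gives $\ep\le C_1 t^*\le C_1 d$, and hence $u^\ep(x)-\ep\le\ep\le C_1 d$; you cannot integrate past $t^*$ because the gradient bound is unavailable there. The real problem is the hard case $u^\ep(x)>2\ep$. Your argument requires the first crossing $z_0=\gamma(t_0)$ of the level $2\ep$ to lie in $\overline{B_{d/2}(x)}$, i.e.\ $t_0\ge d/2$, and all you can prove is $t_0\ge\ep/C_1$. These two are compatible only in the subcase $d\le 2\ep/C_1$. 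In the complementary subcase $d>2\ep/C_1$ the point $z_0$ may lie at distance as small as $\ep/C_1\ll d$ from $\partial B_d(x)$, so interior Harnack on $B_{d/2}(x)$ never reaches it, and you offer no argument for this subcase. This is not a corner case but the generic situation: if $u^\ep$ grows roughly linearly off $\partial\{u^\ep>\ep\}$ with slope of order one, then $\{\ep<u^\ep<2\ep\}$ is a strip of width $\sim\ep$, so $t_0\sim\ep\ll d$ whenever $d\gg\ep$. A Harnack chain from $z_0$ to $x$ would need $\sim\log(d/\ep)$ balls and yields a bound $v(x)\lesssim(d/\ep)^{c}\ep$, which is far weaker than the claimed $Cd$ when $\ep\ll d$. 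The paper gets around this precisely by the barrier: Harnack gives $v\ge c\,v(x)$ on $\partial B_{\delta_0/2}(x_0)$, the subsolution $\psi$ (with these boundary data on the annulus) satisfies $\psi\le v$, and at the touching point $y_0$ the Hopf-type computation $|\nabla\psi(y_0)|\sim v(x)/\delta_0$, combined with the gradient bound $|\nabla u^\ep(y_0)|\le c_3$ coming from the $\ep$-rescaling around $y_0$ (a point you also correctly know how to obtain), yields $v(x)\le C\delta_0$. Some such boundary comparison mechanism appears to be indispensable here; Harnack alone does not close the argument.
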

\begin{proof}
For $x_0\in B_{1/4}\cap\{u^\ep>\ep\}$ take, $m_0=u^{\ep}(x_0)-\ep$
and $\delta_0=\mbox{dist}(x_0,\{u^{\ep}\leq\ep\}\cap B_1)$. Since
$0\in \partial \{u^{\ep}>\ep\}\cap B_1$, $\delta_0\leq 1/4$. We
want to prove that, $m_0\leq
C(N,\delta,g_0,\|\beta\|_{\infty},g(1))\delta_0$.

Since, $B_{\delta_0}(x_0)\subset \{u^{\ep}>\ep\}\cap B_1$ we have
that, $u^{\ep}-\ep> 0$ in $B_{\delta_0}(x_0)$ and
$\L(u^{\ep}-\ep)=0$. By Harnack's inequality there exists
$c_1=c_1(N,g_0,\delta)$ such that
$$\min_{B_{\delta_0/2}(x_0)}(u^{\ep}-\ep)\geq c_1 m_0.$$
Let us take $\varphi=e^{-\mu|x|^2}-e^{-\mu \delta_0^2}$ with
$\mu={2K}\,/\,{\delta\delta_0^2}$,  where  $K=2N$ if $g_0<1$ and
$K=2(g_0-1)+2N$ if $g_0\ge 1$. Then, we have that $\L \varphi
>0$ in $B_{\delta_0}\setminus B_{{\delta_0/2}}$ (see the proof of  Lemma 2.9 in \cite{MW1}).

Let now $\psi(x)=c_2 m_0 \varphi\left({x-x_0}\right)$ for $x\in
\overline{B_{\delta_0}(x_0)}\setminus {B_{\delta_0/2}(x_0)}$.
Then, again by Lemma 2.9 in \cite{MW1}, we have that, if we choose
$c_2$ conveniently depending on $N, \delta$, $g_0$,
$$\begin{cases} \L \psi (x)>0 &\quad \mbox{ in } B_{\delta_0}(x_0)\setminus \overline{B_{\delta_0/2}(x_0)}\\
\psi =0 & \quad \mbox{ on }\partial B_{\delta_0}(x_0)\\
\psi=c_1 m_0 &\quad \mbox{ on }\partial B_{\delta_0/2}(x_0).
\end{cases}
$$

By the comparison principle (see Lemma 2.8 in \cite{MW1}) we have,
\begin{equation}\label{compphiuep} \psi(x)\leq u^{\ep}(x)-\ep \quad \mbox{ in }
\overline{B_{\delta_0}(x_0)}|\setminus
{B}_{\delta_0/2}(x_0).\end{equation} Take $y_0\in
\partial {B}_{\delta_0}(x_0)\cap \partial \{u^{\ep}>\ep\}$. Then,
$y_0\in \overline{B_{1/2}}$ and
\begin{equation}\label{psicero}
\psi(y_0)=u^{\ep}(y_0)-\ep=0.
\end{equation}
Let $v^{\ep}=\frac1\ep u^{\ep}(y_0+\ep x)$. Then if $\ep<1$
 we have that $\L v^{\ep}=\beta(v^ {\ep})$ in $B_{1/2}$ and
 $v^{\ep}(0)=1$.
Therefore, by Harnack's inequality (see \cite{Li1}) we have that
$\max_{\overline{B}_{1/4}} v^{\ep}\leq \widetilde{c}$ and
\begin{equation}\label{graduv14}
|\nabla u^{\ep}(y_0)|=|\nabla v^{\ep}(0)|\leq \widetilde{c}
\max_{\overline{B}_{1/4}} v^{\ep}\leq c_3.
\end{equation}
Finally, by \eqref{compphiuep}, \eqref{psicero} and
\eqref{graduv14} we have that, $|\nabla \psi(y_0)|\leq |\nabla
u^{\ep}(y_0)|\leq c_3$.
 Observe that $|\nabla \psi(y_0)|=
c_2 m_0 e^{-\mu \delta_0^2} 2\mu \delta_0\leq c_3$. Therefore,
$$m_0\leq \frac{c_3 e^{\mu\delta_0^2}}{ c_2 2\mu \delta_0}=
\frac{c_3 \delta e^{2K/\delta}}{ c_2 4K}\delta_0
$$ and the result follows.

\end{proof}

Now, we can prove the main result of this section,

\begin{prop}\label{cotagradsing}
Let $u^{\ep}$ be a solution of $\L u^{\ep}=\beta_{\ep}(u^{\ep})
\mbox{ in } B_1$. Assume that  $0\in \partial\{u^{\ep}>\ep\}$.
Then, we have for $x\in B_{1/8}$,
$$|\nabla u^{\ep}(x)|\leq C$$ with $C=C(N,\delta,g_0,\|\beta\|_{\infty},g(1))$.

\end{prop}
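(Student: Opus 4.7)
My plan is a dichotomy on the size of $u^\ep(x)$, reducing in both cases to an equation to which either Lemma \ref{umenor2ep} or Lieberman's interior gradient estimate for $\L v = 0$ applies directly. I fix $x\in B_{1/8}$ and assume $\ep\le 1$ throughout (as implicit in the hypothesis of Lemma \ref{umenor2ep}).

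If $u^\ep(x)\le 2\ep$, then Lemma \ref{umenor2ep} applied on the ball $B_{1/2}(x)\subset B_1$ immediately gives $|\nabla u^\ep(x)|\le C$, with $C$ depending only on the listed parameters. So the interesting case is $u^\ep(x)>2\ep$. Set $\delta_0:=\mbox{dist}(x,\{u^\ep\le\ep\}\cap B_1)$. Since $0\in\partial\{u^\ep>\ep\}$, every neighborhood of $0$ meets $\{u^\ep\le\ep\}$, so $\delta_0\le|x|\le 1/8$ and in particular $B_{\delta_0}(x)\subset B_{1/4}$. On $B_{\delta_0}(x)$ we have $u^\ep>\ep$, hence $\beta_\ep(u^\ep)=0$ and $w:=u^\ep-\ep\ge 0$ solves $\L w=0$. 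Applying Lemma \ref{uep=ep} at an arbitrary $y\in B_{\delta_0}(x)$ together with the triangle inequality yields $w(y)\le C\,\mbox{dist}(y,\{u^\ep\le\ep\}\cap B_1)\le 2C\delta_0$, hence $\|w\|_{L^\infty(B_{\delta_0}(x))}\le 2C\delta_0$.

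I now rescale: put $\tilde w(y):=w(x+\delta_0 y)/\delta_0$ on $B_1$. A direct chain rule computation gives $\nabla_y\tilde w(y)=(\nabla_x w)(x+\delta_0 y)$, so that $\L_y\tilde w(y)=\delta_0\,(\L w)(x+\delta_0 y)=0$ in $B_1$, and $0\le\tilde w\le 2C$ there. Lieberman's interior gradient estimate from \cite{Li1} applied to $\tilde w$ at the origin then yields $|\nabla\tilde w(0)|\le C'$, with $C'$ depending only on $N,\delta,g_0,g(1)$ and the $L^\infty$ bound of $\tilde w$; undoing the scaling produces $|\nabla u^\ep(x)|=|\nabla\tilde w(0)|\le C'$, as desired.

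The only delicate point, and the one I take as the main (if modest) obstacle, is verifying that the scaling $w\mapsto w(x+\delta_0 y)/\delta_0$ preserves the equation $\L w=0$ with constants independent of $\delta_0$. This invariance is precisely the mechanism converting the linear-in-$\delta_0$ height bound of Lemma \ref{uep=ep} into a scale-invariant gradient bound; all remaining steps (the triangle inequality, the two-case split, and the application of Lieberman's estimate) are routine.
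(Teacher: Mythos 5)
Your proof is correct, and its ingredients coincide with the paper's (Lemma \ref{umenor2ep}, Lemma \ref{uep=ep}, the scaling invariance of $\L$, and Lieberman's interior gradient estimate), but you organize the argument around a genuinely different decomposition, and the normalization you use is slightly slicker. The paper fixes $x_0\in B_{1/8}\cap\{u^\ep>\ep\}$, rescales $u^\ep$ itself by $v(y)=u^\ep(x_0+\delta_0 y)/\delta_0$, and is then forced into a two-case discussion on whether $\ep<\bar c\delta_0$ or $\ep\ge\bar c\delta_0$, tuning the auxiliary constant $\bar c$ so that the second case collapses to $u^\ep(x_0)<2\ep$ (to which Lemma \ref{umenor2ep} applies). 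Your trick of subtracting $\ep$ and rescaling $w=u^\ep-\ep$ instead yields $\sup_{B_1}\tilde w\le 2C_1$ directly, independent of the ratio $\ep/\delta_0$; since $\L$ is unaffected by adding a constant and $|\nabla\tilde w(0)|=|\nabla u^\ep(x)|$, the dichotomy on $\ep$ versus $\delta_0$ becomes unnecessary, and you only need the single outer split $u^\ep(x)\le 2\ep$ versus $u^\ep(x)>2\ep$. This costs nothing and removes the parameter $\bar c$ entirely, so your variant is a clean simplification; the paper's version, while a bit heavier, makes the role of the ratio $\ep/\delta_0$ more explicit, which is the quantity controlling when the equation in $B_{\delta_0}(x_0)$ is genuinely $\L$-homogeneous versus dominated by the reaction term.

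One small thing worth stating explicitly, as both proofs rely on it: Lemma \ref{umenor2ep} is only invoked under the standing assumption $\ep\le 1$. You do flag this, which the paper leaves implicit in the statement of Proposition \ref{cotagradsing}.
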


\begin{proof}
By Lemma \ref{umenor2ep} we know that if $x_0\in \{u^{\ep}\leq
2\ep\}\cap B_{3/4}$ then,
$$|\nabla u^{\ep}(x_0)|\leq C_0$$ with
$C_0=C_0(N,\delta,g_0,\|\beta\|_{\infty},g(1))$.

 Let
$x_0\in B_{1/8}\cap \{u^{\ep}>\ep\}$ and
$\delta_0=\mbox{dist}(x_0,\{u^{\ep}\leq \ep\})$.

As $0\in\partial\{u^{\ep}>\ep\}$ we have that $\delta_0\leq 1/8$.
Therefore, $B_{\delta_0}(x_0)\subset \{u^{\ep}>\ep\}\cap B_{1/4}$
and then $\L u^{\ep}=0$ in $B_{\delta_0}(x_0)$ and,  by Lemma
\ref{uep=ep},
\begin{equation}\label{ecuuepig}
u^{\ep}(x)\leq \ep+ C_1 \mbox{dist}(x,\{u^{\ep}\leq \ep\}) \quad
\mbox{ in } B_{\delta_0}(x_0).
\end{equation}
\begin{enumerate}
\item

Suppose that $\ep<\bar{c} \delta_0$ with $\bar{c}$ to be
determined.  Let $v(x)=\frac1{\delta_0}{u^{\ep}(x_0+\delta_0 x)}$.
Then, $\mathcal{L}v=\delta_0\beta_{\ep} (u^{\ep}(x_0+\delta_0
x))=0$ in $B_1$. Therefore, by the results of \cite{Li1}
$$|\nabla v(0)|\leq \widetilde{C}\sup_{B_1} v,  $$ with $\widetilde{C}=\widetilde{C}(N,g_0,\delta,
g(1))$.  We obtain,
$$|\nabla u^{\ep}(x_0)|\leq \frac{\widetilde{C}}{\delta_0}
\sup_{B_{\delta_0}(x_0)} u^{\ep} \leq
\frac{\widetilde{C}}{\delta_0} (\ep+C\delta_0)\leq
\widetilde{C}(\bar{c}+C).$$

\item Suppose that $\ep\geq \bar{c} \delta_0$. By \eqref{ecuuepig}
we have,
$$u^{\ep}(x_0)\leq \ep+ C_1 \delta_0\leq \Big (1+\frac{C_1}{\bar{c}}\Big)\ep< 2 \ep,$$
if we choose $\bar{c}$ big enough. By Lemma \ref{umenor2ep}, we
have $|\nabla u^{\ep}(x_0)|\leq C$, with\\
$C=C(N,g_0,\delta,\|\beta\|_{\infty},g(1))$.
\end{enumerate}
The result follows.
\end{proof}

With these lemmas  we obtain the following,

\begin{corol}\label{gradsing}
Let $u^{\ep}$ be a solution of
$$\L u^{\ep}=\beta_{\ep}(u^{\ep}) \quad \mbox{ in } \Omega,$$ with $\|u^{\ep}\|_{L^{\infty}(\Omega)}\leq
L$.
  Then, we have for $\Omega'\subset\subset \Omega$, that there
  exists $\ep_0(\Omega,\Omega')$ such that if $\ep\leq
  \ep_0(\Omega,\Omega')$,
$$|\nabla u^{\ep}(x)|\leq C\quad \mbox{ in } \Omega'$$
with
$C=C(N,\delta,g_0,L,\|\beta\|_{\infty},g(1),\mbox{dist}(\Omega',\partial\Omega))$.
\end{corol}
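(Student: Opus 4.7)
The claim is a pointwise Lipschitz bound in $\Omega'$, and I would deduce it from Proposition \ref{cotagradsing} and Lemma \ref{umenor2ep} (together with the interior gradient estimates of Lieberman \cite{Li1} for $\L$-harmonic functions) by a localization argument based on the distance from an arbitrary $x_0 \in \Omega'$ to the level set $\partial\{u^\ep > \ep\}$. I would set $d_0 := \mbox{dist}(\Omega',\partial\Omega)$, $r_0 := d_0/16$, and take $\ep_0(\Omega,\Omega') := \min(r_0,1)$; with these choices the proof reduces to three easy subcases.

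If $\mbox{dist}(x_0,\partial\{u^\ep>\ep\})<r_0$, I would pick $y_0 \in \partial\{u^\ep>\ep\}$ with $|y_0-x_0|<r_0$ and rescale $v(y) := R^{-1}u^\ep(y_0+Ry)$ with $R:=8r_0=d_0/2$. Since $|y_0-x_0|<d_0$, the ball $B_R(y_0)$ lies in $\Omega$, and a short computation using $\nabla v(y) = \nabla u^\ep(y_0+Ry)$ gives $\L v(y) = R\,\L u^\ep(y_0+Ry) = R\,\beta_\ep(Rv(y)) = \beta_{\ep/R}(v(y))$ in $B_1$, with $0 \in \partial\{v>\ep/R\}$. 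Because $\ep/R \le 1$ by the choice of $\ep_0$, Proposition \ref{cotagradsing} applies and yields $|\nabla v|\le C$ on $B_{1/8}$; as $(x_0-y_0)/R \in B_{1/8}$, unwinding the scaling gives $|\nabla u^\ep(x_0)| \le C$ of the required form. If instead $\mbox{dist}(x_0,\partial\{u^\ep>\ep\}) \ge r_0$, then $B_{r_0}(x_0)$ is connected and disjoint from $\partial\{u^\ep>\ep\}$, hence lies entirely in $\{u^\ep>\ep\}$ or entirely in $\{u^\ep\le\ep\}$; in the first subcase $\L u^\ep = 0$ on $B_{r_0}(x_0)$ and Lieberman's estimate applied to $w(x) := r_0^{-1}u^\ep(x_0+r_0 x)$ in $B_1$ gives $|\nabla u^\ep(x_0)| = |\nabla w(0)| \le \widetilde C(N,\delta,g_0,g(1))\,L/r_0$, while in the second subcase $u^\ep(x_0) \le \ep \le 2\ep$ so Lemma \ref{umenor2ep} (applied with radius $r_0$) yields the bound directly.

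The only substantive technicality is verifying the rescaling identity $\L v = \beta_{\ep/R}(v)$, and this works precisely because $|\nabla v| = |\nabla u^\ep|$ is scale invariant under this particular rescaling: $\L$ then rescales by the single multiplicative factor $R$ that is exactly what converts $\beta_\ep$ into $\beta_{\ep/R}$. This is the ingredient that replaces the homogeneity of $\Delta_p$ used in \cite{DPS}, and it is really the only place where one has to stop and think; once it is in place the argument is a routine case analysis and the constants produced in all three subcases depend only on the quantities allowed in the statement.
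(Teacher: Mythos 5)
Your proposal is correct and follows essentially the same route as the paper's own proof: the same dichotomy on $\mathrm{dist}(x_0,\partial\{u^\ep>\ep\})$, the same rescaling $v(y)=R^{-1}u^\ep(y_0+Ry)$ (the paper uses $\tau$ in place of your $R$) which converts $\beta_\ep$ into $\beta_{\ep/R}$ so that Proposition \ref{cotagradsing} applies near the level set, and the same two far-field subcases handled by Lieberman's interior estimate for $\L u^\ep=0$ and by Lemma \ref{umenor2ep}, respectively. The only differences are the explicit (harmless) choices of $r_0$ and $\ep_0$.
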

\begin{proof}
Let $\tau>0$ such that $\forall x\in \Omega'$,
$\overline{B_{\tau}(x)}\subset \Omega$ and $\ep\leq \tau$. Let
$x_0\in \Omega'$.
\begin{enumerate}
\item If $\delta_0=\mbox{dist} (x_0,\partial\{u^{\ep}>\ep\})\leq
\tau/8$, let $y_0\in\partial\{u^{\ep}>\ep\}$ such that
$|x_0-y_0|=\delta_0$. Let $v(x)=\frac1\tau{u^{\ep}(y_0+\tau x)}$,
and $\bar{x}=\frac{x_0-y_0}{\tau},$ then $|\bar{x}|<1/8$. As $0\in
\partial\{v>\ep/\tau\}$ and $\mathcal{L}v=\beta_{\ep/\tau} (v)$ in
$B_1$, we have by Proposition \ref{cotagradsing}
$$|\nabla u^{\ep}(x_0)|=|\nabla v(\bar{x})|\leq C.$$
\item If $\delta_0=\mbox{dist} (x_0,\partial\{u^{\ep}>\ep\})\geq
\tau/8$, there holds that
\begin{enumerate}
\item $B_{\tau/8}(x_0)\subset \{u^{\ep}>\ep\},$ or \item
$B_{\tau/8}(x_0)\subset \{u^{\ep}\leq \ep\},$
\end{enumerate}
In the first case, $\mathcal{L}u^{\ep}=0$ in $B_{\tau/8}(x_0)$.
Therefore, $$|\nabla u^{\ep}(x_0)|\leq
C(N,g_0,\delta,\tau,g(1),L).$$ In the second case, we can apply
Lemma \ref{umenor2ep} and we have,
$$|\nabla u^{\ep}(x_0)|\leq
C(N,g_0,\delta,\tau,g(1), 2\|\beta\|_{\infty}).$$ The result is
proved.
\end{enumerate}
\end{proof}

\section{Passage to the limit}
\setcounter{equation}{0} Since we have that $|\nabla u^{\ep}|$ is
locally bounded by a constant independent of $\ep$, we have that
there exists a function $u \in Lip_{loc}(\Omega)$ such that, for a
subsequence $\ep_j\to 0$, $u^{\ep_j}\to u$. In this section we
will prove some properties of the function $u$.

We start with some technical results.

\begin{prop}\label{nose}
Let $\{u^{\ep}\}$ be a uniformly bounded family of nonnegative
solutions of \eqref{ecsing}. Then, for any sequence $\ep_{j}\to 0$
there exists a subsequence $\ep'_{j}\to 0$ and $u\in
\mbox{Lip}_{loc}(\Omega)$ such that,
\begin{enumerate}
\item $u^{\ep'_j}\to u$ uniformly in compact subsets of $\Omega$,
\item $\L u=0$ in $\Omega\cap\{u>0\}$ \item There exists a locally
finite measure $\mu$ such that
$\beta_{\ep'_j}(u^{\ep'_j})\rightharpoonup \mu$ as measures in
$\Omega'$, for every $\Omega'\subset\subset \Omega$,
 \item Assume $g_0\ge 1$. Then, $\nabla
u^{\ep'_j}\rightarrow \nabla u$ in $L_{loc}^{g_0+1}(\Omega)$,
\item $$\int_{\Omega} F(|\nabla u|) \nabla u \nabla
\varphi=-\int_{\Omega} \varphi \, d\mu$$ for every $\varphi\in
C^{\infty}_0(\Omega)$. Moreover $\mu$ is supported on $\Omega\cap
\partial\{u>0\}$.
\end{enumerate}
 \end{prop}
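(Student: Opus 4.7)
\emph{Parts (1) and (2).} Corollary \ref{gradsing} provides a uniform Lipschitz bound for $\{u^{\ep}\}$ on every $\Omega'\subset\subset\Omega$. Combined with uniform $L^\infty$ boundedness, Arzel\`a--Ascoli yields a subsequence $\ep'_j\to 0$ and a locally Lipschitz $u$ with $u^{\ep'_j}\to u$ locally uniformly, proving (1). On any compact $K\subset\{u>0\}$ one has $u\ge m>0$, hence $u^{\ep'_j}\ge \ep'_j$ for $j$ large; since $\beta$ vanishes outside $(0,1)$, this forces $\beta_{\ep'_j}(u^{\ep'_j})\equiv 0$ on $K$ and so $\L u^{\ep'_j}=0$ there. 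Lieberman's $C^{1,\alpha}$ estimates then give precompactness of $\{u^{\ep'_j}\}$ in $C^{1,\alpha}_{loc}(\{u>0\})$, and one passes to the limit in the weak form to obtain $\L u=0$ in $\{u>0\}$, which is (2).

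\emph{Part (3).} Fix $\Omega'\subset\subset\Omega$ and a cutoff $\varphi\in C_0^\infty(\Omega)$ with $0\le\varphi\le 1$ and $\varphi\equiv 1$ on $\Omega'$. Testing \eqref{ecsin} against $\varphi$ produces an identity whose right-hand side is bounded in $j$ by the uniform gradient bound together with the monotonicity of $g$. Hence $\int_{\Omega'}\beta_{\ep'_j}(u^{\ep'_j})\,dx$ is uniformly bounded; weak-$*$ compactness of Radon measures together with a diagonal argument on an exhaustion of $\Omega$ extracts a further subsequence along which $\beta_{\ep'_j}(u^{\ep'_j})\rightharpoonup\mu$ for some locally finite $\mu$.

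\emph{Part (4).} This is the main technical step. Fix $\eta\in C_0^\infty(\Omega)$ with $\eta\ge 0$ and use $\varphi=\eta(u^{\ep'_j}-u)$ as test function in \eqref{ecsin}. Rearranging yields an identity whose left-hand side equals $\int \eta \bigl[F(|\nabla u^{\ep'_j}|)\nabla u^{\ep'_j}-F(|\nabla u|)\nabla u\bigr]\cdot(\nabla u^{\ep'_j}-\nabla u)\,dx$, which is nonnegative by monotonicity of $p\mapsto F(|p|)p$, and whose right-hand side is the sum of three terms: $-\int\eta(u^{\ep'_j}-u)\beta_{\ep'_j}(u^{\ep'_j})\,dx$, vanishing in the limit by uniform convergence $u^{\ep'_j}\to u$ together with (3); $-\int (u^{\ep'_j}-u) F(|\nabla u^{\ep'_j}|)\nabla u^{\ep'_j}\cdot\nabla\eta\,dx$, vanishing by the same uniform convergence and the uniform $L^\infty$ bound on the flux; and $-\int\eta F(|\nabla u|)\nabla u\cdot(\nabla u^{\ep'_j}-\nabla u)\,dx$, vanishing by the weak convergence $\nabla u^{\ep'_j}\rightharpoonup\nabla u$ in $L^q_{loc}$ for every $q<\infty$, which follows from the uniform Lipschitz bound and the uniform convergence of the $u^{\ep'_j}$. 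Therefore the monotonicity integrand tends to $0$ in $L^1(\eta\,dx)$. For $g_0\ge 1$, the quantitative strong monotonicity inequality for $F(|p|)p$ guaranteed by \eqref{cond} (an Orlicz-type analogue of the classical $p$-Laplacian bound, recorded in the Appendices) bounds this integrand below by $c\,|\nabla u^{\ep'_j}-\nabla u|^{g_0+1}$, yielding strong convergence in $L^{g_0+1}_{loc}(\Omega)$. This is the hardest step: for $\L=\Delta_p$ the Browder--Minty trick is routine, but the absence of homogeneity of $g$ and the Orlicz framework make the quantitative monotonicity encoded in \eqref{cond} essential.

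\emph{Part (5).} From (4), a further subsequence of $\nabla u^{\ep'_j}$ converges a.e.\ to $\nabla u$; dominated convergence, using the uniform local bound on $F(|\nabla u^{\ep'_j}|)|\nabla u^{\ep'_j}|=g(|\nabla u^{\ep'_j}|)$, then gives $F(|\nabla u^{\ep'_j}|)\nabla u^{\ep'_j}\to F(|\nabla u|)\nabla u$ in $L^1_{loc}$. Passing to the limit in \eqref{ecsin} against any $\varphi\in C_0^\infty(\Omega)$ produces the required identity $\int F(|\nabla u|)\nabla u\cdot\nabla\varphi\,dx=-\int\varphi\,d\mu$. For the support claim, if $x_0\in\{u>0\}$ then by the argument in (2) one has $\mu\equiv 0$ in a neighborhood of $x_0$; if instead $x_0$ lies in the interior of $\{u=0\}$, then $u\equiv 0$ locally and $\L u=0$ locally, so the limiting identity forces $\mu\equiv 0$ near $x_0$. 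Hence $\operatorname{supp}\mu\subset\Omega\cap\partial\{u>0\}$.
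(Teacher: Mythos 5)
Your proof is correct, and parts (1), (2), (3), and (5) follow the paper's argument essentially verbatim. The genuine difference is in part (4). The paper establishes strong convergence of the gradients through a four-step machine: a Minty-type argument to identify the weak-$*$ limit $\xi$ of $A(\nabla u^{\ep'_j})$ with $A(\nabla u)$, then convergence of $\int A(\nabla u^{\ep'_j})\nabla u^{\ep'_j}$ and of $\int G(|\nabla u^{\ep'_j}|)$, and finally the quantitative monotonicity estimate (cited from Theorem 4.1 of \cite{MW1}) applied to an $s$-averaged monotonicity integrand. You instead test the equation directly with $\eta(u^{\ep'_j}-u)$ and show that the three remainder terms each vanish in the limit using only the locally uniform convergence $u^{\ep'_j}\to u$, the local $L^1$ bound on $\beta_{\ep'_j}(u^{\ep'_j})$, the uniform $L^\infty_{loc}$ bound on the flux $A(\nabla u^{\ep'_j})$, and the weak-$*$ convergence $\nabla u^{\ep'_j}\rightharpoonup^* \nabla u$ in $L^\infty_{loc}$; then the pointwise quantitative monotonicity inequality $(A(\xi)-A(\eta))\cdot(\xi-\eta)\ge c\,|\xi-\eta|^{g_0+1}$ for $|\xi|,|\eta|\le C_0$ (valid for $g_0\ge1$, where $c$ depends on the local bound) closes the argument. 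This route is appreciably shorter because it never needs to identify the weak-$*$ limit of the fluxes nor prove energy convergence; those become consequences of the strong convergence rather than intermediate steps. The one small imprecision in your write-up is the phrase ``recorded in the Appendices'': the Appendix only collects properties (g1)--(g3) of $g$, not the monotonicity estimate itself. The estimate does follow readily from \eqref{cond} together with (g1) (bounding $DA(p)\zeta\cdot\zeta\ge\min(1,\delta)F(|p|)|\zeta|^2$, using $|p_t|\ge|t-t_0||\xi-\eta|$ along the segment from $\eta$ to $\xi$, and $F(s)\ge C\,s^{g_0-1}$ for $s\le C_0$), but this derivation should be written out or the citation should instead be to Theorem 4.1 of \cite{MW1}, as the paper does.
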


 \begin{remark} We can always assume that $g_0\ge 1$. If we don't want to assume it, we can change the statement in
  item (3) by
$\nabla u^{\ep'_j}\rightarrow \nabla u$ in
$L_{loc}^{g_1+1}(\Omega)$, where $g_1=\mbox{max\,}(1,g_0)$.
\end{remark}

\begin{proof}
(1) follows by Corollary \ref{gradsing}.

In order to prove (2), take $E\subset\subset E'\subset\subset
\{u>0\}$. Then, $u\geq c>0$ in $E'$. Therefore, $u^{\ep'_j}>c/2$
in $E'$ for $\ep_j'$ small. If we take $\ep_j'<c/2$ --as
$\mathcal{L}u^{\ep_j'}=0$ in $\{u^{\ep_j'}>\ep_j'\}$-- we have
that $\mathcal{L}u^{\ep_j'}=0$ in $E'$. Therefore, by the results
in \cite{Li1}, $\|u^{\ep_j'}\|_{C^{1,\alpha}(E)}\leq C$.

Thus, for a subsequence we have,
$$\nabla u^{\ep_j'}\rightarrow \nabla u \quad \mbox{ uniformly  in } E.$$

Therefore, $\mathcal{L}u=0$.

In order to prove (3), let us take $\Omega'\subset\subset \Omega$,
and $\varphi\in C_0^{\infty}(\Omega)$ with $\varphi=1$ in
$\Omega'$ as a test function in ($ P_{\ep_j}$). Since  $\|\nabla
u^{\ep_j'}\| \leq C$ in $\Omega'$, there holds that
$$C(\varphi)\geq \int_{\Omega} \beta_{\ep_j'}(u^{\ep_j'})\varphi\, dx
\geq \int_{\Omega'} \beta_{\ep_j'}(u^{\ep_j'})\, dx.$$ Therefore,
$ \beta_{\ep_j'}(u^{\ep_j'})$ is bounded in $L^1_{loc}(\Omega)$,
so that, there exists a locally finite  measure $\mu$ such that
$$\beta_{\ep_j'}(u^{\ep_j'})\rightharpoonup\mu \quad \mbox{ as
measures }$$that is, for every $\varphi \in C_0(\Omega)$,
$$\int_{\Omega}\beta_{\ep_j'}(u^{\ep_j'})\varphi\, dx \rightarrow \int_{\Omega} \varphi\, d\mu$$

 We divide the proof of (4) into several steps.

Let $\Omega'\subset\subset \Omega$, then by Corollary
\ref{gradsing}, $|\nabla u ^{\ep_j}|\leq C$ in $\Omega'$.
Therefore for a subsequence $\ep'_j$ we have that there exists
$\xi\in (L^{\infty}(\Omega'))^N$ such that,
\begin{equation}\label{limitelim}
\begin{aligned} &\nabla u^{\ep_j'}
\rightharpoonup \nabla u \quad  &*-\mbox{weakly in }
(L^\infty(\Omega'))^N\\
  &A(\nabla
u^{\ep_j'})\rightharpoonup\xi \quad  &*-\mbox{weakly in }
(L^\infty(\Omega'))^N\\
&u^{\ep'_j}\to u &\mbox{ uniformly in } \Omega'
\end{aligned}\end{equation}
  where $A(p)=F(|p|) p$. For simplicity we call $\ep_j'=\ep$.

%
%

Step 1.  Let us first prove that for any $v\in W^{1,G}_0(\Omega')$
there holds that
\begin{equation}\label{primero}
\int_{\Omega'}(\xi-A(\nabla u))\nabla v\,dx=0.
\end{equation}

In fact, as  $A$ is monotone (i.e $
\big(A(\eta)-A(\zeta)\big)\cdot(\eta-\zeta)\geq 0\  \forall
\eta,\zeta\in\R^N$)
 we have that, for any $w\in W^{1,G}(\Omega')$,
\begin{equation}\label{monotono1}
I=\int_{\Omega'} \big(A(\nabla u^{\ep})-A(\nabla w)\big) (\nabla
u^{\ep}-\nabla w)\, dx \geq 0.
\end{equation}
Therefore, if $\psi\in C_0^\infty(\Omega')$,
\begin{equation}\label{monotono2}\begin{aligned} &-\int_{\Omega'}
\beta_{\ep}(u^{\ep}) u^{\ep}\, dx -\int_{\Omega'} A(\nabla
u^{\ep}) \nabla w\, dx -\int_{\Omega'} A(\nabla w) (\nabla
u^{\ep}-\nabla w)\, dx \\&=-\int_{\Omega'} \beta_{\ep}(u^{\ep})
u^{\ep}\, dx -\int_{\Omega'}
A(\nabla u^{\ep}) \nabla u^{\ep}\, dx +I\\
&=-\int_{\Omega'} \beta_{\ep}(u^{\ep}) u\, dx-\int_{\Omega'}
\beta_{\ep}(u^{\ep}) (u^{\ep}-u)\psi\, dx -\int_{\Omega'}
\beta_{\ep}(u^{\ep}) (u^{\ep}-u) (1-\psi)\, dx \\&\ \ \
-\int_{\Omega'}
A(\nabla u^{\ep}) \nabla u^{\ep}\, dx+I \\
&\geq-\int_{\Omega'} \beta_{\ep}(u^{\ep}) u\, dx+\int_{\Omega'}
A(\nabla u^{\ep}) \nabla(u^{\ep}-u) \psi\, dx+\int_{\Omega'}
A(\nabla u^{\ep}) (u^{\ep}-u)\nabla \psi\, dx\\& \ \  \
-\int_{\Omega'} \beta_{\ep}(u^{\ep}) (u^{\ep}-u) (1-\psi)\, dx
 -\int_{\Omega'}
A(\nabla u^{\ep}) \nabla u^{\ep}\, dx,
\end{aligned}\end{equation}
where in the last inequality we are using \eqref{monotono1} and
\eqref{ecsin}.

Now, take  $\psi=\psi_j\to \chi_{\Omega'}$. If $\Omega'$ is smooth
we may assume that $\int |\nabla \psi_j|\, dx \to \mbox{Per
}\Omega'$. Therefore,
$$
\Big|\int_{\Omega'}A(\nabla u^{\ep}) (u^{\ep}-u)\nabla \psi_j\, dx
\Big|\leq C\|u^{\ep}-u\|_{L^{\infty}(\Omega')} \int_{\Omega'}
|\nabla \psi_j|\, dx\leq C \|u^{\ep}-u\|_{L^{\infty}(\Omega')}.
$$
So that, with this choice of $\psi=\psi_j$ in \eqref{monotono2} we
obtain,

\begin{align*}
&-\int_{\Omega'} \beta_{\ep}(u^{\ep}) u^{\ep}\, dx -\int_{\Omega'}
A(\nabla u^{\ep}) \nabla w\, dx -\int_{\Omega'} A(\nabla w)
(\nabla u^{\ep}-\nabla w)\, dx  \\&\geq -\int_{\Omega'}
\beta_{\ep}(u^{\ep}) u\, dx+\int_{\Omega'} A(\nabla u^{\ep})
\nabla(u^{\ep}-u) \, dx- C \|u^{\ep}-u\|_{L^{\infty}(\Omega')}
-\int_{\Omega'} A(\nabla u^{\ep}) \nabla u^{\ep}\, dx
\\&=-\int_{\Omega'}
\beta_{\ep}(u^{\ep}) u\, dx-\int_{\Omega'} A(\nabla u^{\ep})
\nabla u \, dx- C \|u^{\ep}-u\|_{L^{\infty}(\Omega')}.
\end{align*}

 Therefore, letting $\ep\to 0$ we get by using \eqref{limitelim} and
 (3) that,
\begin{align*} &-\int_{\Omega'} u\, d\mu
-\int_{\Omega'} \xi \nabla w\, dx -\int_{\Omega'} A(\nabla w)
(\nabla u-\nabla w)\, dx \geq -\int_{\Omega'}  u\, d\mu
-\int_{\Omega'} \xi \nabla u\, dx
\end{align*}
and then,
\begin{equation}\label{ok}
 \int_{\Omega'} (\xi-A(\nabla w)) (\nabla u-\nabla w)\,
dx \geq 0.
\end{equation}
Take now $w=u-\lambda v$ with $v\in W^{1,G}_0(\Omega')$. Dividing
by $\lambda$ and taking $\lambda\to 0^+$ in \eqref{ok} we obtain,
$$\int_{\Omega'} (\xi-A(\nabla u)) \nabla v\, dx
\geq 0.
$$
Replacing $v$ by $-v$ we obtain \eqref{primero}.

 Step 2. Let us prove that
 $\int_{\Omega'}A(\nabla\uep)\nabla \uep\to\int_{\Omega'}A(\nabla u)\nabla u$.

 \medskip

By passing to the limit in the equation
\begin{equation}\label{Auep}
0=\int_{\Omega'} A(\nabla u^{\ep}) \nabla \phi+\int_{\Omega'}
\beta_{\ep}(u^\ep)  \phi\, dx,
\end{equation}
 we have, by Step 1, that for every $\phi\in C_0^{\infty}(\Omega')$,
\begin{equation}\label{Auu}
 0=\int_{\Omega'}
A(\nabla u) \nabla \phi+\int_{\Omega'}  \phi\, d\mu.
\end{equation}

 On the other hand, taking $\phi=u^{\ep} \psi$ in \eqref{Auep} with $\psi \in C_0^{\infty}(\Omega')$ we have that
$$0=\int_{\Omega'} A(\nabla u^{\ep}) \nabla u^{\ep}
\psi\, dx + \int_{\Omega'} A(\nabla u^{\ep}) u^{\ep} \nabla\psi\,
dx+\int_{\Omega'} \beta_{\ep}(u^{\ep})  u^{\ep}\psi\, dx.$$ Using
that,
\begin{align*}
\int_{\Omega'} A(\nabla u^{\ep}) u^{\ep} \nabla\psi\, dx & \to
\int_{\Omega'}
A(\nabla u) u \nabla\psi\, dx \\
 \int_{\Omega'} \beta_{\ep}( u^{\ep}) u^{\ep}
\psi\, dx &\to \int_{\Omega'}  u \psi d\mu
\end{align*}
we obtain $$0=\lim_{\ep\to 0} \Big(\int_{\Omega'} A(\nabla
u^{\ep}) \nabla u^{\ep} \psi\, dx \Big)+ \int_{\Omega'} A(\nabla
u) u \nabla\psi\, dx+
 \int_{\Omega'} u \psi d\mu  . $$
Taking now, $\phi=u \psi$ in \eqref{Auu} we have, $$
0=\int_{\Omega'} A(\nabla u) \nabla u \psi\, dx + \int_{\Omega'}
A(\nabla u) u \nabla\psi\, dx+
 \int_{\Omega'} u \psi\, d\mu.
$$ Therefore, $$\lim_{\ep\to 0} \int_{\Omega'} A(\nabla u^{\ep})
\nabla u^{\ep} \psi\, dx= \int_{\Omega'} A(\nabla u) \nabla u
\psi\, dx. $$ Then,
\begin{align*}&\left|\int_{\Omega'}
(A(\nabla u^{\ep}) \nabla u^{\ep}-A(\nabla u) \nabla u )\,
dx\right|\\& \leq \left|\int_{\Omega'} (A(\nabla u^{\ep}) \nabla
u^{\ep}-A(\nabla u) \nabla u )\psi\, dx\right|+
\left|\int_{\Omega'} (A(\nabla u^{\ep}) \nabla u^{\ep} )(1-\psi)\,
dx\right|\\&\ \ +\left|\int_{\Omega'} A(\nabla u) \nabla u
(1-\psi)\, dx\right|
\\& \leq \left|\int_{\Omega'} (A(\nabla u^{\ep}) \nabla
u^{\ep}-A(\nabla u) \nabla u )\psi\, dx\right|+ C\int_{\Omega'}
|1-\psi|\, dx.
\end{align*}
So that, taking $\ep\to 0$ and then $\psi\to 1$ a.e with
$0\leq\psi\leq 1$ we obtain,
\begin{equation}\label{step11}
\int_{\Omega'} A(\nabla u^{\ep}) \nabla u^{\ep} dx\rightarrow
\int_{\Omega'} A(\nabla u) \nabla u\, dx.
\end{equation}

With similar ideas we can prove that,
\begin{equation}\label{step12}
\int_{\Omega'} A(\nabla u^{\ep}) \nabla u \,  dx\rightarrow
\int_{\Omega'} A(\nabla u) \nabla u\, dx.\end{equation}

Step 3.  Let us prove that
\begin{equation}\label{Guepau}\int_{\Omega'} G(|\nabla
u^{\ep}|)\, dx \rightarrow \int_{\Omega'} G(|\nabla u|)\,
dx.\end{equation}

 First, by the monotonicity of $A$  we have,
\begin{align*}\int_{\Omega'} G(|\nabla u^{\ep}|)\, dx-
\int_{\Omega'} G(|\nabla u|)\, dx&= \int_{\Omega'} \int_0^1
A(\nabla u + t(\nabla u^{\ep}-\nabla u)) \nabla(u^{\ep}-u) \, dx\\
&\geq \int_{\Omega'} A(\nabla u) \nabla (u^{\ep}-u)\,
dx.\end{align*} Therefore,  we have
$$\liminf_{\ep\to 0}\int_{\Omega'} G(|\nabla u^{\ep}|)\, dx-
\int_{\Omega'} G(|\nabla u|)\, dx\geq 0.$$

Now, by Step 2 we have,
\begin{align*}\int_{\Omega'} G(|\nabla u^{\ep}|)\, dx-
\int_{\Omega'} G(|\nabla u|)\, dx&= \int_{\Omega'} \int_0^1
A(\nabla u + t(\nabla u^{\ep}-\nabla u)) \nabla(u^{\ep}-u) \,
dx\\&\leq \int_{\Omega'} A(\nabla u^{\ep}) \nabla (u^{\ep}-u)\,
dx\rightarrow 0.
\end{align*}

Thus,   we have that \eqref{Guepau} holds.

 Step 4. End of the proof of (4).

Let $u^s=s u+(1-s)u^{\ep}$. Then,
\begin{equation}\label{cong_0}
\begin{aligned} \ \ \ \ \ \ \ &\ \ \ \  \  \int_{\Omega'} G(|\nabla u|)\, dx-
\int_{\Omega'} G(|\nabla u^{\ep}|)\, dx= \int_{\Omega'} \int_0^1
A(\nabla u^s) \nabla(u-u^{\ep}) \, ds\, dx \\  \  &\  \ =
\int_{\Omega'} \int_0^1 (A(\nabla u^s)-A(\nabla u^{\ep}))
\nabla(u^s-u^{\ep}) \, \frac{ds}s\, dx + \int_{\Omega'} A(\nabla
u^{\ep}) \nabla(u-u^{\ep}) \, dx.
\end{aligned}
\end{equation}
As in the proof of Theorem 4.1 in \cite{MW1}, we have that
\begin{align*}
\int_{\Omega'} \int_0^1 & (A(\nabla u^s)-A(\nabla u^{\ep}))
\nabla(u^s-u^{\ep}) \, ds\, dx  \\ &\geq  C\Big(\int_{ A_2}
G(|\nabla u-\nabla u^{\ep}|)  \, dx+\int_{A_1}F(|\nabla u|)
|\nabla u-\nabla u^{\ep}|^2 \, dx\Big),
\end{align*}
where
$$A_1=\{x\in \Omega': |\nabla u-\nabla u^{\ep}|\leq 2 |\nabla u|\},\quad
A_2=\{x\in \Omega': |\nabla u-\nabla u^{\ep}| > 2 |\nabla u|\}.$$

Therefore, by \eqref{step11}, \eqref{step12}, \eqref{Guepau} and
\eqref{cong_0} we have,
$$\Big(\int_{ A_2}
G(|\nabla u-\nabla u^{\ep}|)  \, dx+\int_{A_1}F(|\nabla u|)
|\nabla u-\nabla u^{\ep}|^2 \, dx\Big) \rightarrow 0.$$ Then, if
we prove that
$$\Big(\int_{ A_2}
G(|\nabla u-\nabla u^{\ep}|)  \, dx+\int_{A_1}F(|\nabla u|)
|\nabla u-\nabla u^{\ep}|^2 \, dx\Big)\geq C \int_{\Omega'}
|\nabla u-\nabla u^{\ep}|^{g_0+1}\, dx$$ the result follows.

In fact, for every $C_0>0$ there exists $C_1>0$ such that
$g(t)\geq C_1 t^{g_0}$ if $t\leq C_0$. Let $C_0$ be such that
$|\nabla u|\leq C_0$ and $|\nabla u-\nabla u^{\ep}|\leq C_0$.
Then, by Lemma \ref{prop},
\begin{align*} &G(|\nabla \uep-\nabla u|)\geq C |\nabla \uep-\nabla
u|^{g_0+1}\\&
 F(|\nabla u|)\geq C_1 |\nabla u|^{g_0-1}\geq C |\nabla \uep-\nabla
 u|^{g_0-1}\quad \mbox{ in } A_1.
\end{align*}
and the claim follows.

Finally (5) holds by (4), (3) and  (2).
 \end{proof}
\begin{lema}\label{bolwepj}
Let $\{u^{\ep_{j}}\}$ be a uniformly bounded family of solutions
of $(P_{\ep_j})$ in $\Omega$ such that $u^{\ep_j}\to u$ uniformly
on compact subsets of $\Omega$ and $\ep_j\to 0$. Let $x_0,\ x_n\in
\Omega\cap\partial\{u>0\}$ be such that $x_n\to x_0\ \mbox{ as }
n\to \infty$. Let $\lambda_n\to 0$,
$u_{\lambda_n}(x)=\frac{1}{\lambda_n} u(x_n+\lambda_n x)$ and
 $(u^{\ep_j})_{\lambda_n}(x)=\frac{1}{\lambda_n} u^{\ep_j}(x_n+\lambda_n x)$.
 Suppose that $u_{\lambda_n}\to U$ as $n\to \infty$ uniformly on compact sets of $\R^N$.
 Then, there  exists $j(n)\to \infty$ such that for every $j_n\geq j(n)$ there holds that $\ep_{j_n}/\lambda_n\to 0$ and
\begin{enumerate}
\item $(u^{\ep_{j_n}})_{\lambda_n}\to U$ uniformly in compact
subsets of $\R^N.$ \item $\nabla
(u^{\ep_{j_n}})_{\lambda_n}\rightarrow \nabla U$ in
$L_{loc}^{g_0+1}(\R^N)$,

\item $\nabla u_{\lambda_n}\rightarrow \nabla U$ in
$L_{loc}^{g_0+1}(\R^N)$.
\end{enumerate}

\end{lema}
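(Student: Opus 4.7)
The plan is to construct $j(n)$ by a diagonal argument using three pieces of information at once: that $\ep_j\to 0$, that $u^{\ep_j}\to u$ uniformly on compacts of $\Omega$, and --- crucially --- that $\nabla u^{\ep_j}\to\nabla u$ strongly in $L^{g_0+1}_{loc}(\Omega)$ by Proposition \ref{nose}(4). The main obstacle is that (3) asks for $L^{g_0+1}$-convergence of the rescaled gradients $\nabla u_{\lambda_n}$, which after change of variables amounts to quantifying the rate of $L^{g_0+1}$-convergence of $\nabla u^{\ep_{j_n}}$ to $\nabla u$ so that it beats the Jacobian blow-up $\lambda_n^{-N}$ coming from the scaling.

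Fix $B=B_{r_0}(x_0)\subset\subset\Omega$. For each $n$, pick $j(n)$ with $j(n)\to\infty$ so that for every $j\geq j(n)$,
$$\ep_j\leq\tfrac{\lambda_n}{n},\qquad \|u^{\ep_j}-u\|_{L^\infty(B)}\leq\tfrac{\lambda_n}{n},\qquad \|\nabla u^{\ep_j}-\nabla u\|_{L^{g_0+1}(B)}^{g_0+1}\leq\tfrac{\lambda_n^N}{n}.$$
Then $\tilde\ep_n:=\ep_{j_n}/\lambda_n\leq 1/n$ for every $j_n\geq j(n)$. For (1), on a compact $K\subset\R^N$, once $x_n+\lambda_n K\subset B$,
$$\sup_K|(u^{\ep_{j_n}})_{\lambda_n}-u_{\lambda_n}|\leq\lambda_n^{-1}\|u^{\ep_{j_n}}-u\|_{L^\infty(B)}\leq 1/n\to 0,$$
and combining with the hypothesis $u_{\lambda_n}\to U$ on $K$ proves (1).

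For (2), a chain rule computation shows that $v_n:=(u^{\ep_{j_n}})_{\lambda_n}$ solves $\L v_n=\beta_{\tilde\ep_n}(v_n)$ on the expanding domain $(\Omega-x_n)/\lambda_n$ --- the operator $\L$ rescales compatibly because $\nabla v_n(x)=\nabla u^{\ep_{j_n}}(x_n+\lambda_n x)$, which gives $\L v_n(x)=\lambda_n\beta_{\ep_{j_n}}(\lambda_n v_n(x))=\beta_{\tilde\ep_n}(v_n(x))$. By (1), $\{v_n\}$ is locally uniformly bounded and converges uniformly to $U$, so Proposition \ref{nose} applies on every fixed ball $B_R\subset\R^N$: along a subsequence, $\nabla v_n\to\nabla U$ in $L^{g_0+1}(B_R)$, and since the uniform limit is unique, every subsequence has a sub-subsequence converging to the same $\nabla U$, so the full sequence converges in $L^{g_0+1}_{loc}(\R^N)$.

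For (3), the triangle inequality in $L^{g_0+1}(B_R)$ reduces matters to estimating $\|\nabla u_{\lambda_n}-\nabla v_n\|_{L^{g_0+1}(B_R)}$. The change of variables $y=x_n+\lambda_n x$ yields
$$\int_{B_R}|\nabla u_{\lambda_n}-\nabla v_n|^{g_0+1}\,dx=\lambda_n^{-N}\!\!\int_{B_{R\lambda_n}(x_n)}\!\!|\nabla u-\nabla u^{\ep_{j_n}}|^{g_0+1}\,dy\leq \lambda_n^{-N}\cdot\frac{\lambda_n^N}{n}=\frac{1}{n},$$
once $B_{R\lambda_n}(x_n)\subset B$, where the third bound imposed in the definition of $j(n)$ is used precisely to cancel the Jacobian factor. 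Combining with (2) gives (3). Thus Proposition \ref{nose}(4) is used not only for (2) but --- through the preassigned rate built into $j(n)$ --- is the essential ingredient for (3), which would be inaccessible with only weak convergence of the gradients.
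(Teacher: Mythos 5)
Your proof is correct and follows exactly the route the paper indicates: a diagonal extraction built on Proposition \ref{nose}, which is what the authors cite (the proof of Lemma 3.2 from Lemma 3.1 in \cite{CLW1}). The three rate conditions defining $j(n)$, the observation that the rescaled $v_n$ solve $(P_{\tilde\ep_n})$ so Proposition \ref{nose} can be re-applied to them for part (2), and the change-of-variables/Jacobian cancellation yielding part (3) are precisely the intended mechanism; the only point worth flagging is that strong $L^{g_0+1}_{loc}$ convergence of $\nabla u^{\ep_j}$ for the \emph{full} given sequence (used to set the third rate condition) requires the same sub-subsequence-plus-uniqueness-of-the-uniform-limit argument that you spell out in part (2), but that is routine.
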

\begin{proof} { The proof follows from Proposition \ref{nose} as the proof of Lemma 3.2 follows from Lemma 3.1 in \cite{CLW1}.}
\end{proof}

Now we prove a technical lemma that is the basis of our main
results.

\begin{lema}\label{sale}
Let $u^{\ep}$ be solutions to
$$ \L u^{\ep}=\beta_{\ep}(u^{\ep})$$ in $\Omega$. Then, for any
$\psi \in C_0^{\infty}(\Omega)$ we have,
\begin{equation}\label{salsale}
-\int_{\Omega} G(|\nabla u^{\ep}|) \psi_{x_1}\, dx + \int_{\Omega}
F(|\nabla u^{\ep}|) \nabla u^{\ep} \nabla \psi\,u^{\ep}_{x_1}\,
dx=\int_{\Omega} B_{\ep}(u^{\ep}) \psi_{x_1},\end{equation} where
$B_{\ep}(s)=\int_0^s \beta_{\ep} (\tau)\, d\tau$.
\end{lema}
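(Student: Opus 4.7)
The identity is a translation--invariance (``Pohozaev--type'') identity in the $x_1$ direction, and the natural route is to test the weak formulation \eqref{ecsin} with the formal choice $\varphi=\psi\,u^{\ep}_{x_1}$ and perform integration by parts twice. Expanding gives
\begin{equation*}
\int_{\Omega} F(|\nabla u^{\ep}|)\nabla u^{\ep}\cdot\nabla\psi\;u^{\ep}_{x_1}\,dx
+\int_{\Omega} F(|\nabla u^{\ep}|)\nabla u^{\ep}\cdot\nabla u^{\ep}_{x_1}\,\psi\,dx
=-\int_{\Omega}\beta_{\ep}(u^{\ep})\,\psi\,u^{\ep}_{x_1}\,dx.
\end{equation*}

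The two key algebraic observations are the chain rule identities
\begin{equation*}
F(|\nabla u^{\ep}|)\nabla u^{\ep}\cdot\nabla u^{\ep}_{x_1}
= g(|\nabla u^{\ep}|)\,\partial_{x_1}|\nabla u^{\ep}|
= \partial_{x_1}\bigl(G(|\nabla u^{\ep}|)\bigr),
\qquad
\beta_{\ep}(u^{\ep})\,u^{\ep}_{x_1} = \partial_{x_1}\bigl(B_{\ep}(u^{\ep})\bigr).
\end{equation*}
Integrating by parts in $x_1$ against $\psi$ converts the middle term into $-\int G(|\nabla u^{\ep}|)\,\psi_{x_1}\,dx$ and the right--hand side into $\int B_{\ep}(u^{\ep})\,\psi_{x_1}\,dx$. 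Putting these together produces \eqref{salsale} exactly.

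The technical obstacle is that $u^{\ep}_{x_1}$ is not a priori a legitimate element of $W^{1,G}_0(\Omega)$, so the use of $\psi u^{\ep}_{x_1}$ as a test function and the appearance of a second derivative in the chain rule have to be justified. Two routes are available: (a) invoke the interior regularity theory of Lieberman, which gives $u^{\ep}\in C^{1,\alpha}_{loc}(\Omega)$; on any compact subset of the open set $\{|\nabla u^{\ep}|>0\}$ the operator $\L$ is uniformly elliptic with $C^{\alpha}$ coefficients, so $u^{\ep}\in W^{2,2}_{loc}$ there, while on $\{|\nabla u^{\ep}|=0\}$ both $G(|\nabla u^{\ep}|)$ and $F(|\nabla u^{\ep}|)\nabla u^{\ep}$ vanish and contribute nothing; or (b) replace $u^{\ep}_{x_1}$ throughout by the difference quotient $D^{h}u^{\ep}(x):=\bigl(u^{\ep}(x+h e_1)-u^{\ep}(x)\bigr)/h$, test \eqref{ecsin} with $\psi\,D^{h}u^{\ep}$, use the translation invariance of the equation (a shifted copy of $u^{\ep}$ still solves $(P_{\ep})$ in the shifted domain, so $\beta_{\ep}(u^{\ep}(x+he_1))$ pairs with the translated gradient term), and pass to the limit $h\to 0$ using the uniform $W^{1,\infty}$ bound from Corollary~\ref{gradsing} to secure the dominated convergence. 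Route (b) is the cleaner option, as it avoids invoking any $W^{2,2}$ regularity and keeps all manipulations inside the natural $W^{1,G}$ framework; I expect this discretised version to be the main (though routine) technical step.
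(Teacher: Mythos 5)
Your plan (testing the weak form with $\psi\,u^{\ep}_{x_1}$ and integrating by parts in $x_1$) is the same Pohozaev--type identity the paper proves, but neither of the two routes you sketch actually justifies the critical middle term, and the paper's real method is a third one.

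Route (a) is broken. For degenerate $g$ (e.g.\ $g(t)=t^{p-1}$ with $p>2$) the solutions are in general not in $W^{2,2}_{\mathrm{loc}}$, and restricting the integration by parts to $\{|\nabla u^{\ep}|>0\}$ does not help: a piecewise integration by parts leaves boundary terms on $\partial\{|\nabla u^{\ep}|=0\}$, a set of uncontrolled regularity. The pointwise vanishing of $G(|\nabla u^{\ep}|)$ and $F(|\nabla u^{\ep}|)\nabla u^{\ep}$ on $\{|\nabla u^{\ep}|=0\}$ is not the same as the vanishing of the boundary terms of the truncated region.

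Route (b) contains the right idea but not the right justification. After inserting $\varphi=\psi\,D^{h}u^{\ep}$, the term $\int F(|\nabla u^{\ep}|)\nabla u^{\ep}\cdot D^{h}\nabla u^{\ep}\,\psi\,dx$ is a discrete second derivative, and the uniform $W^{1,\infty}$ bound from Corollary~\ref{gradsing} does \emph{not} dominate it: $D^{h}\nabla u^{\ep}$ is neither uniformly bounded in $h$ nor known to converge a.e.\ So ``pass to the limit by dominated convergence'' is exactly where the argument fails. To close it one must exploit convexity of $G$: for $\psi\geq 0$, the monotonicity of $A$ gives
$$
A(\nabla u^{\ep})\cdot D^{h}\nabla u^{\ep}\;\le\; D^{h}\bigl(G(|\nabla u^{\ep}|)\bigr)\;\le\; A\bigl(\nabla u^{\ep}(\cdot+he_1)\bigr)\cdot D^{h}\nabla u^{\ep},
$$
the two flanking integrals are computed from the weak formulations of $u^{\ep}$ and of its translate $u^{\ep}(\cdot+he_1)$ both tested with $\psi\,D^{h}u^{\ep}$, and they converge to the same limit; only then does the middle quantity $\int D^{h}\bigl(G(|\nabla u^{\ep}|)\bigr)\psi\,dx$ converge, and a discrete integration by parts turns it into $-\int G(|\nabla u^{\ep}|)\psi_{x_1}\,dx$. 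Your allusion to ``translation invariance'' hints at this but does not carry the argument; without the convexity sandwich there is a genuine gap.

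For reference, the paper sidesteps all of this by regularizing the operator, not the test function: set $g_n(t)=g(t)+t/n$, note that the $g_n$ satisfy \eqref{cond} with constants $\min\{1,\delta\},\max\{1,g_0\}$ independent of $n$, and solve $\L_n u_n=\beta_{\ep}(u^{\ep})$ in $\Omega'$ with $u_n=u^{\ep}$ on $\partial\Omega'$. Since $F_n\ge 1/n$ the approximate operator is uniformly elliptic, Ladyzhenskaya--Uraltseva gives $u_n\in W^{2,2}$, and the Pohozaev computation with test function $\psi\,(u_n)_{x_1}$ is fully legitimate. The uniform Lieberman $C^{1,\alpha}$ bounds together with the comparison principle then give $u_n\to u^{\ep}$ in $C^1_{\mathrm{loc}}$ and the identity passes to the limit $n\to\infty$. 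Either adopt that regularization, or fill the convexity sandwich into your route (b); as written, the proof is incomplete at exactly the step the approximation is designed to handle.
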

\begin{proof}
For simplicity, since $\ep$ will be fixed throughout the proof, we
will denote $u^{\ep}=u$.

We know that $|\nabla u|\leq C$, for some constant $C$.
 Take $g_n(t)=g(t)+\frac{t}{n}$, %
then
\begin{equation}\label{mismaclase}\mbox{min}\{1,\delta\}\leq\frac{{g'_n}(t)
t}{g_n(t)}\leq \mbox{max}\{1,g_0\}.\end{equation} Take
$A_n(p)=\frac{g_n(|p|)}{|p|}p$, and
$\mathcal{L}_n(v)=\mbox{div}(A_n(\nabla v))$. For
$\Omega'\subset\subset\Omega$ let us take $u_n$ the solution of
\begin{equation}\label{ecunn}\begin{cases} \mathcal{L}_n u_n=\beta_{\ep}(u) &\quad
\mbox{ in } \Omega'
\\ u_n=u &\quad \mbox{ on } \partial\Omega'.
\end{cases}
\end{equation}

By  \eqref{mismaclase},we have  that  all the $g_n's$ belong to
the same class and then, by the results of \cite{Li1} we have that
for every $\Omega''\subset\subset\Omega'$  there exists a constant
$C$ independent of $n$ such that
$\|u_n\|_{C^{1,\alpha}(\Omega'')}\leq C$.

Therefore, there exists $u_0$ such that, for a subsequence
\begin{align*}&u_n\rightarrow u_0 \quad \mbox{uniformly on compact subsets of } \Omega'\\
&\nabla u_n\rightarrow \nabla u_0 \quad \mbox{uniformly on compact
subsets of }\Omega'.
\end{align*}
On the other hand,  $A_n(p)\rightarrow {A}(p)$  uniformly in
compact sets of $\R^N$.
 Thus, $\mathcal{{L}} u_0= \beta_{\ep}(u)$ and, as $u_0=u$ on
$\partial\Omega'$ in the sense of $W^{1,G}(\Omega')$ and
$\mathcal{{L}} u= \beta_{\ep}(u)$, there holds that  $u_0=u$ in
$\Omega'$. (Observe that in the proof of the Comparison Principle,
in Lemma 2.8 of \cite{MW1} we can change the equation $\L u=0$, by
$\L u=f(x)$ with $f\in L^{\infty}(\Omega)$ to prove uniqueness of
solution of the Dirichlet problem).

Now let us prove that the following equality holds,
$$-\int_{\Omega} G_n(|\nabla u_n|) \psi_{x_1}\, dx + \int_{\Omega} F_n(|\nabla u_n|)
\nabla u_n \nabla \psi\, {u_n}_{x_1}\, dx=-\int_{\Omega}
\beta_{\ep}(u) {u_n}_{x_1} \psi.$$

In fact, for  $n$ fixed we have that $F_n(t)=g_n(t)/t\geq 1/n$ and
then by the uniform estimates of \cite{LU},  $u_n\in
W^{2,2}(\Omega)$. As $u_n$ is a weak solution of \eqref{ecunn} and
as $u_n\in W^{2,2}(\Omega)$, taking as test function in the weak
formulation of \eqref{ecunn} the function $\psi {u_n}_{x_1}$, we
have that
$$\int_{\Omega} F_n(|\nabla u_n|)\nabla u_n\nabla(  \psi {u_n}_{x_1})\, dx =-\int_{\Omega}
\beta_{\ep}(u){u_n}_{x_1} \psi\, dx.$$

As $(G_n(|\nabla u_n|))_{x_1}=g_n(|\nabla u_n|)\frac{\nabla
u_n}{|\nabla u_n|} ({\nabla u_n})_{x_1}=F(|\nabla u_n|) \nabla u_n
({\nabla u_n})_{x_1}$ we have that
$$-\int_{\Omega} G_n(|\nabla u_n|) \psi_{x_1}\, dx+
 \int_{\Omega} F_n(|\nabla u_n|)\nabla u_n\nabla \psi\,
{u_n}_{x_1} \, dx =-\int_{\Omega} \beta_{\ep}(u) u_{nx_1}\psi\,
dx,$$

Passing to the limit as $n\to \infty$ and then, integrating by
parts on the right hand side we get,
$$-\int_{\Omega} G(|\nabla u|) \psi_{x_1}\, dx+
 \int_{\Omega} F(|\nabla u|)\nabla u\nabla \psi\,
{u}_{x_1} \, dx =\int_{\Omega} B_{\ep}(u) \psi_{x_1}\, dx.
$$

\end{proof}

Now, we characterize some special global limits.

\begin{prop}\label{alphamenorphiM}
Let $x_0\in \Omega$ and let $u^{\ep_k}$ be solutions to
$$ \L u^{\ep_k}=\beta_{\ep_k}(u^{\ep_k})$$ in $\Omega$. If $u^{\ep_k}$
converge to $\alpha(x-x_0)_1^+$ uniformly in compact subsets of
$\Omega$, with
  $\ep_k\to 0$ as $k\to \infty$ and $\alpha\in \R$, there holds that
$$\alpha=0 \quad\mbox{or} \quad \alpha=\Phi^{-1}(M).$$
Where $\Phi(t)=g(t)t-G(t)$.
\end{prop}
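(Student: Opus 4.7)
The plan is to pass to the limit in the Pohozaev-type identity of Lemma \ref{sale} and exploit the half-plane structure of the limit to read off the slope. The case $\alpha = 0$ is immediate, so assume $\alpha > 0$; the goal reduces to showing $\Phi(\alpha) = M$. Without loss of generality take $x_0 = 0$.

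For $\psi \in C_0^\infty(\Omega)$, apply Lemma \ref{sale} to each $u^{\ep_k}$:
$$-\int G(|\nabla u^{\ep_k}|) \psi_{x_1}\,dx + \int F(|\nabla u^{\ep_k}|) \nabla u^{\ep_k}\cdot \nabla \psi\,u^{\ep_k}_{x_1}\,dx = \int B_{\ep_k}(u^{\ep_k})\,\psi_{x_1}\,dx.$$
For the LHS, I would use the uniform $L^\infty$ bound on $\nabla u^{\ep_k}$ (Corollary \ref{gradsing}) together with the strong $L^{g_0+1}_{\rm loc}$ convergence $\nabla u^{\ep_k}\to \nabla u$ from Proposition \ref{nose}(4); by dominated convergence both integrands pass to their pointwise limits. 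Since $\nabla u = \alpha e_1 \chi_{\{x_1>0\}}$, the LHS tends to
$$\int_{\{x_1>0\}} \bigl(g(\alpha)\alpha - G(\alpha)\bigr)\psi_{x_1}\,dx = \Phi(\alpha)\int_{\{x_1>0\}}\psi_{x_1}\,dx = -\Phi(\alpha)\int_{\{x_1=0\}}\psi\,d\H.$$

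The work lies in the RHS: the aim is to show $B_{\ep_k}(u^{\ep_k}) \to M\chi_{\{x_1>0\}}$ in $L^1_{\rm loc}$. On a compact $K\subset \{x_1>0\}$, uniform convergence $u^{\ep_k}\to \alpha x_1>0$ combined with $\ep_k\to 0$ gives $u^{\ep_k}\geq \ep_k$ on $K$ for $k$ large, hence $B_{\ep_k}(u^{\ep_k})=M$ there. On a compact $K\subset \{x_1<0\}$ one must rule out a nontrivial contribution. The weak convergence $\beta_{\ep_k}(u^{\ep_k})\to \mu$ with $\mu$ supported on $\partial\{u>0\}=\{x_1=0\}$ (Proposition \ref{nose}) gives $\int_K \beta_{\ep_k}(u^{\ep_k})\,dx\to 0$; using that $\beta\geq c>0$ on compact subintervals of $(0,1)$ forces the ``transition set'' $\{0<u^{\ep_k}/\ep_k<1\}\cap K$ to have Lebesgue measure $o(\ep_k)$. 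For the ``burnt'' set $\{u^{\ep_k}\geq \ep_k\}\cap K$, I would argue via blow-up: rescaling $v^k(y)=u^{\ep_k}(x_k+\ep_k y)/\ep_k$ at an accumulation point $x_k\in K$ and invoking Lemma \ref{bolwepj} together with the uniform Lipschitz bound produces an entire solution $V\geq 0$ of $\L V=\beta(V)$ whose existence, combined with $u^{\ep_k}\to 0$ uniformly on $K$, leads to a contradiction. Granting this, the RHS converges to $M\int_{\{x_1>0\}}\psi_{x_1}\,dx = -M\int_{\{x_1=0\}}\psi\,d\H$.

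Equating the two limits for arbitrary $\psi$ gives $\Phi(\alpha)=M$, hence $\alpha=\Phi^{-1}(M)$ by the strict monotonicity of $\Phi$ (since $\Phi'(t)=g'(t)t>0$ by \eqref{cond} and $\Phi(0)=0$). The main obstacle is the control of $B_{\ep_k}(u^{\ep_k})$ on compact subsets of $\{x_1<0\}$: uniform convergence $u^{\ep_k}\to 0$ there does not by itself imply $u^{\ep_k}/\ep_k\to 0$, and one must combine the measure-theoretic information coming from Proposition \ref{nose}(3) with a blow-up analysis anchored on Corollary \ref{gradsing} to preclude a spurious weak-$*$ limit on the negative side.
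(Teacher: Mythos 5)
Your overall strategy --- pass to the limit in the identity of Lemma \ref{sale}, compute the left side using the strong $L^{g_0+1}_{loc}$ convergence of gradients from Proposition \ref{nose}(4), and read off $\Phi(\alpha)=M$ --- is the same as the paper's, and your treatment of $\{x_1>0\}$ and of the transition layer $\{\delta<u^{\ep_k}/\ep_k<1-\delta\}$ is sound. The gap is in the ``burnt'' set on $\{x_1<0\}$, where your blow-up argument does not close. Lemma \ref{bolwepj} is stated only for blow-ups centered at $x_k\to x_0\in\partial\{u>0\}$; here the $x_k$ accumulate inside a compact $K\subset\subset\{x_1<0\}\subset\{u=0\}^\circ$, so its hypotheses fail. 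More seriously, even if you produced an entire limit $V\ge 0$ with $\L V=\beta(V)$ and $V(0)\ge 1$, there is no contradiction: $V\equiv 1$ is such a solution, and uniform convergence $u^{\ep_k}\to 0$ on $K$ gives no information on $v^k=u^{\ep_k}/\ep_k$ after the $\ep_k$-scale rescaling --- which is precisely the obstacle you flag in your closing sentence. So you have not shown $B_{\ep_k}(u^{\ep_k})\to 0$ on $\{x_1<0\}$, and in fact one should not try to prove this a priori.

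The paper sidesteps the issue by \emph{not} trying to show the burnt set is small before using the identity. It first observes $\int_K|\nabla B_{\ep_k}(u^{\ep_k})|\,dx=\int_K\beta_{\ep_k}(u^{\ep_k})|\nabla u^{\ep_k}|\,dx\to 0$ by the uniform Lipschitz bound and Proposition \ref{nose}(3), so $B_{\ep_k}(u^{\ep_k})\to\overline M$ in $L^1_{loc}(\{x_1<0\})$ for a \emph{constant} $\overline M\in[0,M]$. Passing to the limit in Lemma \ref{sale} then gives $\Phi(\alpha)=M-\overline M$, an equation that still involves the unknown $\overline M$. A measure-theoretic step pins down $\overline M$: given $\eta>0$, choose $\delta$ so that $\{\eta<B_{\ep_k}(u^{\ep_k})<M-\eta\}\cap K\subset\{\delta<u^{\ep_k}/\ep_k<1-\delta\}\cap K$; the latter has $\beta_{\ep_k}(u^{\ep_k})\ge a/\ep_k$ with $a=\inf_{[\delta,1-\delta]}\beta>0$, so its measure tends to $0$ since $\|\beta_{\ep_k}(u^{\ep_k})\|_{L^1(K)}$ is bounded. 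This forces $\overline M\in\{0,M\}$. Finally the equation $\Phi(\alpha)=M-\overline M$ together with $\alpha>0$ (hence $\Phi(\alpha)>0$, since $\Phi'(t)=tg'(t)>0$ and $\Phi(0)=0$) rules out $\overline M=M$, giving $\overline M=0$ and $\Phi(\alpha)=M$. The key idea your proposal misses is that the Pohozaev-type identity itself is what eliminates the spurious value $\overline M=M$; no independent blow-up control of the burnt set is needed or easily available.
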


\begin{proof}
Assume, for simplicity, that $x_0=0$. Since $u^{\ep_k}\geq 0$, we
have that $\alpha\geq 0$. If $\alpha=0$ there is nothing to prove.
So let us assume that $\alpha>0$. Let $\psi \in
C_0^{\infty}(\Omega)$. By Lemma \ref{sale} we have,
\begin{equation}\label{novale}
-\int_{\Omega} G(|\nabla u^{\ep_k}|) \psi_{x_1}\, dx +
\int_{\Omega} F(|\nabla u^{\ep_k}|) \nabla u^{\ep_k} \nabla \psi
\, u^{\ep_k}_{x_1}\, dx=\int_{\Omega} B_{\ep_k}(u^{\ep_k})
\psi_{x_1}.
\end{equation}
Since $0\leq B_{\ep_k}(s)\leq M$, there exists $M(x)\in
L^{\infty}(\Omega)$, $0\leq M(x)\leq M$, such that
$B_{\ep_k}\rightarrow M$ $*\,$-- weakly in $L^{\infty}(\Omega)$.

If $y\in \Omega\cap\{x_1>0\}$, then $u^{\ep_k}\geq \frac{\alpha
y_1}{2}$ in a neighborhood of $y$ for $k$ large. Thus,
$u^{\ep_k}\geq \ep_k$ and  we have
$$B_{\ep_k}(u^{\ep_k})(x)=\int_0^{u^{\ep_k}/{\ep_k}}\beta(s)\,
ds=M.$$

On the other hand, if we let $K\subset\subset\Omega\cap\{x_1<0\}$,
since by Proposition \ref{nose} $\beta_{\ep_k}(u^{\ep_k})\to 0$ in
$L^1(K)$, we have that $\int_K\big|\nabla
B_{\ep_k}(u^{\ep_k})\big|\,dx=\int_K\beta_{\ep_k}(u^{\ep_k})|\nabla
u^{\ep_k}|\,dx\to0$. Therefore,
 we may assume that $B_{\ep_k}\rightarrow \overline M$ in $L^1_{loc}(\{x_1<0\})$ for a constant $\overline M\in [0,M]$.

Passing to the limit in \eqref{novale}, using the strong
convergence result in Proposition \ref{nose} we have
$$
-\int_{\{x_1>0\}} G(\alpha) \psi_{x_1}\, dx + \int_{\{x_1>0\}}
F(\alpha)\, \alpha^2 \psi_{x_1} \, dx=M\int_{\{x_1>0\}}
 \psi_{x_1}+\overline{M}\int_{\{x_1<0\}}
 \psi_{x_1}.
$$
Then,
$$
 (-G(\alpha)  + g(\alpha) \alpha)\int_{\{x_1>0\}}
  \psi_{x_1} \, dx=M\int_{\{x_1>0\}}
 \psi_{x_1}\, dx+\overline{M}\int_{\{x_1<0\}}
 \psi_{x_1}\, dx.
$$
And, integrating by parts, we obtain
$$
 (-G(\alpha)  + g(\alpha) \alpha)\int_{\{x_1=0\}}
 \psi \, dx'=M\int_{\{x_1=0\}}
 \psi \, dx'-\overline{M}\int_{\{x_1=0\}}
 \psi\, dx'.
$$

Thus, $
 (-G(\alpha)  + g(\alpha) \alpha)=M-\overline{M}$.

 In order to see that
 $\alpha=\Phi^{-1}(M)$ let us show that
$\overline{M}=0$.

In fact, let $K\subset\subset \{x_1<0\}\cap \Omega$. Then for any
$\eta>0$ there exists $0<\delta<1$ such that,
\begin{align*}
\big|K\cap\{\eta<B_{\ep_j}(u^{\ep_j})<M-\eta\}\big|&\leq
\big|K\cap\{\delta<u^{\ep_j}/{\ep_j}<1-\delta\}\big|\leq
\big|K\cap\{\beta_{\ep_j}(u^{\ep_j})\geq a/\ep_j\}\big|\to 0
\end{align*}
 as $j\to \infty$,
where $a=\inf_{[\delta,1-\delta]}\beta>0$, and we are using that
$\beta_{\ep_j}(u^{\ep_j})$ is bounded in $L^1(K)$ uniformly in
$j$.

Now, as $B(u^{\ep_j})\to \overline{M}$ in $L^1(K)$, we conclude
that
$$\big|K\cap\{\eta<\overline{M}<M-\eta\}\big|=0$$ for every $\eta>0$. Hence, $\overline{M}=0$ or $\overline{M}=M$
and, since $\alpha>0$, we must have  $\overline{M}=0$.
\end{proof}

\bigskip

\begin{prop}\label{gammapositiva}
Let $x_0\in\Omega$, and let $u^{\ep_k}$ be a solution to
$\mathcal{L}u^{\ep_k}=\beta_{\ep_k}(u^{\ep_k})$ in $\Omega$.
Assume $g'$ satisfies \eqref{condderivadag} below. If $u^{\ep_k}$
converges to $\alpha(x-x_0)_1^++\gamma(x-x_0)_1^-$ uniformly in
compact subsets of $\Omega$, with $\alpha,\gamma>0$ and $\ep_k\to
0$ as $k\to\infty$, then
$$\alpha=\gamma\leq \Phi^{-1}(M).$$
\end{prop}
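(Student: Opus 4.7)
The plan is to split the argument into two steps: first showing $\alpha=\gamma$ via the integral identity of Lemma \ref{sale}, and then obtaining the bound $\alpha\le\Phi^{-1}(M)$ through a blow-up at scale $\ep_k$. For the first step, take $\psi\in C_0^\infty(\Omega)$ and apply \eqref{salsale}, then let $\ep_k\to 0$. Since both $\alpha,\gamma>0$, the limit $u(x)=\alpha(x-x_0)_1^++\gamma(x-x_0)_1^-$ is strictly positive almost everywhere, so $u^{\ep_k}>\ep_k$ a.e.\ for $k$ large and $B_{\ep_k}(u^{\ep_k})\to M$ a.e.\ (and in $L^1_{\mathrm{loc}}(\Omega)$ by dominated convergence). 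Hence the right-hand side tends to $M\int\psi_{x_1}\,dx=0$. Using the strong $L^{g_0+1}_{\mathrm{loc}}$ convergence of gradients from Proposition \ref{nose}(4), the left-hand side converges (taking $x_0=0$ for convenience) to
\begin{equation*}
\Phi(\alpha)\int_{\{x_1>0\}}\psi_{x_1}\,dx+\Phi(\gamma)\int_{\{x_1<0\}}\psi_{x_1}\,dx=\bigl(\Phi(\gamma)-\Phi(\alpha)\bigr)\int_{\{x_1=0\}}\psi\,dx',
\end{equation*}
the last equality by integration by parts. Arbitrariness of $\psi$ yields $\Phi(\alpha)=\Phi(\gamma)$, and since $\Phi'(t)=tg'(t)>0$ by \eqref{cond}, the function $\Phi$ is strictly increasing on $(0,\infty)$, forcing $\alpha=\gamma$.

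For the bound $\alpha\le\Phi^{-1}(M)$, I would perform a blow-up at scale $\ep_k$. Setting $v^k(y)=u^{\ep_k}(x_0+\ep_k y)/\ep_k$, one has $\L v^k=\beta(v^k)$ on an expanding family of balls, and the Lipschitz estimates of Corollary \ref{gradsing} transfer to $v^k$. After recentering at a point where $u^{\ep_k}\le 2\ep_k$ (such points exist in an $O(\ep_k)$-neighborhood of $\{x_1=0\}$ by Lemma \ref{umenor2ep}), one extracts a subsequential local uniform limit $V:\R^N\to[0,\infty)$ solving $\L V=\beta(V)$ on all of $\R^N$, with asymptotic slope $\alpha$ as $y_1\to+\infty$ and $-\gamma$ as $y_1\to-\infty$. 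The $x'$-translation invariance of the outer profile $\alpha|x_1|$, combined with uniqueness properties of $\L$ that condition \eqref{condderivadag} is designed to ensure, should force $V=V(y_1)$, reducing the problem to the ODE
\begin{equation*}
\bigl(g(|V'|)\,\mathrm{sgn}(V')\bigr)'=\beta(V),\qquad V\ge 0.
\end{equation*}
Multiplying by $V'$ and integrating on a monotonic interval gives the Hamiltonian identity $\Phi(|V'|)=B(V)+C$ where $B(s)=\int_0^s\beta(t)\,dt$. Since $V\ge 0$ is unbounded at $\pm\infty$ it attains an interior minimum at some $y_*$ with $V(y_*)=V_*\ge 0$ and $V'(y_*)=0$; evaluating the Hamiltonian at $y_*$ and at $y_1\to+\infty$ (where $|V'|\to\alpha$ and $B(V)\to M$) yields
\begin{equation*}
\Phi(\alpha)=M-B(V_*)\le M,
\end{equation*}
as required.

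The hardest part is justifying the blow-up and the subsequent 1D reduction. Controlling the base point of $v^k$ requires the recentering argument above, and condition \eqref{condderivadag} is used to obtain enough regularity of the family $\{v^k\}$ to pass to the limit in the rescaled equation without losing the structural information about $\L$ that is needed at the limit. The 1D reduction itself rests on transferring the $x'$-translation symmetry of the asymptotic profile $\alpha|x_1|$ to the full limit $V$, which should follow from a uniqueness argument for the Dirichlet problem of $\L V=\beta(V)$ with prescribed asymptotic slopes (a variant of Lemma 2.8 in \cite{MW1}); making this rigorous is the technically delicate point.
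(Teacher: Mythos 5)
Your first step ($\alpha=\gamma$) is essentially the paper's argument: pass to the limit in the identity of Lemma \ref{sale}, use that $B_{\ep_k}(u^{\ep_k})\to M$ on both sides of $\{x_1=0\}$ (since $\alpha,\gamma>0$) together with the strong convergence of gradients, and integrate by parts to get $\Phi(\alpha)=\Phi(\gamma)$, hence $\alpha=\gamma$ by strict monotonicity of $\Phi$. That part is fine.

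The second step, however, has a genuine gap. Your blow-up $v^k(y)=u^{\ep_k}(x_k+\ep_k y)/\ep_k$ does produce (subsequentially) an entire solution $V$ of $\L V=\beta(V)$, but the claim that $V$ has asymptotic slope $\alpha$ as $y_1\to+\infty$ and $-\gamma$ as $y_1\to-\infty$ does not follow from the locally uniform convergence $u^{\ep_k}\to\alpha|x_1|$: the limits $\ep_k\to0$ and $|y|\to\infty$ do not commute, and the $\ep$-scale profile could a priori lose the macroscopic slopes entirely. Worse, the proposed repair -- a ``uniqueness argument for the Dirichlet problem of $\L V=\beta(V)$ with prescribed asymptotic slopes'' to force $V=V(y_1)$ -- cannot work as stated, because $\beta$ is not monotone, so there is no comparison principle and no uniqueness for this semilinear problem; this is precisely why the paper never attempts to classify the $\ep$-scale limit. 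You also misattribute the role of hypothesis \eqref{condderivadag}: it is not needed for compactness or regularity of $\{v^k\}$ (Lieberman's estimates under \eqref{cond} suffice); in the paper it enters only in the comparison Lemma \ref{compara}, to show that the perturbed one-dimensional barrier $w^{\ep,\eta}(x_1)=w^{\ep}(\varphi_\eta(x_1-c_\eta))$ remains a strict subsolution when it is slid against a solution.

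The paper's route avoids your missing 1D classification altogether. Assuming $\alpha>\Phi^{-1}(M)$, it replaces $u^{\ep_j}$ by the \emph{minimal} solutions $v^{\ep_j}$ of $(P_{\ep_j})$ in a rectangle with boundary data $u-b_{\ep_j}$; minimality gives uniqueness and hence the symmetry $v^{\ep_j}(x_1,x')=v^{\ep_j}(-x_1,x')$, and an explicit 1D ODE barrier (exactly the Hamiltonian computation you describe, with the factor $g_0/\delta$ in one regime) combined with Lemma \ref{compara} shows $v^{\ep_j}\to\alpha|x_1|$. Then, instead of identifying the profile, one multiplies the equation by $v^{\ep_j}_{x_1}$ on the half-rectangle and integrates by parts, using $v^{\ep_j}_{x_1}(0,x')=0$ from the symmetry, to obtain in the limit $g(\alpha)\alpha\le G(\alpha)+M$, i.e.\ $\Phi(\alpha)\le M$, a contradiction. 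Your Hamiltonian identity is the right heuristic, but without the minimal-solution construction (or some substitute giving both the convergence to $\alpha|x_1|$ and the reflection symmetry) the bound $\alpha\le\Phi^{-1}(M)$ is not established.
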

\begin{proof}
We can assume that $x_0=0$.

As in the proof of Proposition \ref{alphamenorphiM} we see that
$B_{\ep_k}(\uepk)\to M$ uniformly on compact sets of $\{x_1>0\}$
and $\{x_1<0\}$. Since $\uepk$ satisfies \eqref{salsale} we get,
after passing to the limit, for any $\psi\in C_0^\infty(\Omega)$,
$$-\int_{\{x_1>0\}} \Phi(\alpha) \psi_{x_1}\, dx  -\int_{\{x_1<0\}} \Phi(\gamma) \psi_{x_1}\, dx
=\int_{\Omega} M \psi_{x_1}.$$

Integrating by parts we obtain,
$$\int_{\{x_1=0\}} \Phi(\alpha) \psi\, dx'  -\int_{\{x_1=0\}} \Phi(\gamma) \psi\,
dx'=0$$ and then, $\alpha=\gamma$.

\bigskip

Now assume that $\alpha>\Phi^{-1}(M)$. We will prove that this is
a contradiction.

{\bf Step 1}. Let $\mathcal{R}_2=\{x=(x_1,x')\in \mathbb{R}^N:
|x_1|<2, |x'|<2\}.$ From the scaling invariance of the problem, we
can assume that $\mathcal{R}_2\subset \Omega$.

We will construct a family  $\{v^{\ep_j}\}$ of solutions of
$(P_{\ep_j})$ in $\mathcal{R}_2$ satisfying
$v^{\ep_j}(x_1,x')=v^{\ep_j}(-x_1,x')$ in $\mathcal{R}_2$, and
such that $v^{\ep_j}\to u$ uniformly on compact subsets of
$\mathcal{R}_2$, where $u(x)=\alpha |x_1|$.

To this end, we take $b_{\ep_j}=\sup_{\mathcal{R}_2}|u^{\ep_j}-u|$
and $v^{\ep_j}$ the minimal solution (the minimum of all
supersolutions) to $(P_{\ep_j})$ in $\mathcal{R}_2$ with boundary
values $v^{\ep_j}=u-b_{\ep_j}$ on $\partial\mathcal{R}_2.$

By Proposition \ref{nose}, there exists $v\in
Lip_{loc}(\mathcal{R}_2)$ such that, for a subsequence that we
still denote $v^{\ep_j}$, $v^{\ep_j}\to v$ uniformly on compact
subsets of $\mathcal{R}_2$. From the minimality of $v^{\ep_j}$ we
have that $u\geq v$.

{ In order to prove that $u\leq v$, we  considered two cases.

First suppose that
$\alpha>\Phi^{-1}\big(\frac{g_0}{\delta}M\big)$}.  Let $w\in
C^{1,\beta}(\mathbb{R})$, be the solution to {
$$(F(|w'|) w')'=\frac{g_0}{\delta}\beta(w) \quad \mbox{in } \mathbb{R}, \quad w(0)=1, \quad
w'(0)=\alpha.$$}

Observe that, when $w'(s)> 0$, the equation is locally uniformly
elliptic so that, as long as $w'>0$, there holds that $w\in C^2$
and a solution to
$$
\big(g(w')\big)'=\frac{g_0}{\delta}\beta(w).
$$

\medskip

Suppose that there exists an $s\in \mathbb{R}$ such that
$w'(s)=0$. Take $s_1$ as the supremum of the $s$'s such that this
happens. Then, $s_1<0$ and,
in $(s_1,0]$, $w'>0$ and $F(|w'|)\,w'=g(w')$. Multiplying the equation by $w'$ and integrating  in this interval we get, 
$$
-\int_{s_1}^0 g(w') w''+g(w')w'\Big|_{s_1}^0 =
{\frac{g_0}{\delta}}B(w)\Big|_{s_1}^0.
$$

Since $g(w')w''=\big (G(w')\big)'$ we get,
$$
\Phi(\alpha)={\frac{g_0}{\delta}}M-{\frac{g_0}{\delta}}B\big(w(s_1)\big)\le
{\frac{g_0}{\delta}}M
$$
which  is a contradiction.

 Then, $w'>0$
everywhere. By the same calculation as before, we obtain that for
any $s\in \mathbb{R}$ we have,
$$\Phi(w'(s))=\Phi(\alpha)+ {\frac{g_0}{\delta}}B(w(s))-{\frac{g_0}{\delta}}M\leq \Phi(\alpha),$$
 and
\begin{equation}\label{wsalpha}
 \Phi(w'(s))=\Phi(\alpha)+ {\frac{g_0}{\delta}}B(w(s))-{\frac{g_0}{\delta}}M\geq
 \Phi(\alpha)-{\frac{g_0}{\delta}}M=\Phi(\bar{\alpha}),
\end{equation}
  for some $\alpha>\bar{\alpha}>0$.
 Thus, $\bar{\alpha}\leq w'(s)\leq \alpha$.

Therefore, $w'(s)=\alpha$ for $s\geq 0$ and there exists
$\bar{s}<0$ such that $w(\bar{s})=0$. This implies, by
\eqref{wsalpha},
 that $w'(\bar{s})=\bar{\alpha}$, and then
$w'(s)=\bar{\alpha}$ for all $s\leq \bar{s}$. Therefore,
$$w(s)=\begin{cases}
1+\alpha s &\quad s>0 \\
\bar{\alpha} (s-\bar{s}) &\quad s\leq \bar{s}.
\end{cases}$$
Let $w^{\ep_j}(x_1)=\ep_j w\Big(\frac{x_1}{\ep_j}
-\frac{b_{\ep_j}}{\bar{\alpha} \ep_j}+\bar{s}\Big)$ then,
$$w^{\ep_j}(0)=\ep_j w\Big(-\frac{b_{\ep_j}}{\bar{\alpha}
\ep_j}+\bar{s}\Big)=\ep_j \bar{\alpha}
\Big(\bar{s}-\frac{b_{\ep_j}}{\bar{\alpha}
\ep_j}-\bar{s}\Big)=-b_{\ep_j}$$ and ${w^{\ep_j}}'(s)\leq \alpha$.
Therefore, $w^{\ep_j}\leq u-b_{\ep_j}$ in $\mathbb{R}$ so that,
$w^{\ep_j}\leq v^{\ep_j}$ on $\partial \mathcal{R}_2$.

Then, by the comparison principle below (Lemma \ref{compara}), we
have that $w^{\ep_j}\leq v^{\ep_j}$ in $\mathcal{R}_2$.

Take $x_1>0$. Then, for $j$ large
$x_1-\frac{b_{\ep_j}}{\bar{\alpha}}>\frac{x_1}{2}$. Thus,
$\frac{1}{\ep_j}
(x_1-\frac{b_{\ep_j}}{\bar{\alpha}})+\bar{s}>\frac{x_1}{2\ep_j}+\bar{s}>0$
for $j$ large.

Therefore, $w^{\ep_j}(x)=\ep_j+\alpha x_1-\frac{\alpha
}{\bar{\alpha}}b_{\ep_j}+\alpha\ep_j\bar{s}$. Hence,
$w^{\ep_j}\rightarrow u$ uniformly on compact set of $\{x_1>0\}$.

Passing to the limit, we get
  that $u\leq v$ in $\mathcal{R}_2 \cap \{x_1>0\}$. Observe that,
  by the uniqueness of the minimal solution, we have
that $v^{\ep_j}(x_1,x')=v^{\ep_j}(-x_1,x')$. Thus, we obtain that
$u\leq v$ in $\mathcal{R}_2$.

This completes the first case.

\medskip

{ Now, suppose that $\alpha\leq
\Phi^{-1}\big(\frac{g_0}{\delta}M\big)$. Let $w\in
C^{1,\beta}(\mathbb{R})$, satisfying
$$(F(|w'|) w')'=\beta(w) \quad \mbox{in } \mathbb{R}, \quad w(0)=1, \quad
w'(0)=\alpha.$$ Again, when $w'(s)> 0$ the equation is locally
uniformly elliptic and then $w\in C^2$.


Proceeding as in the first case we see that $\bar \alpha\le
w'(s)<\alpha$ in $\R$ where, in the present case,
$\Phi(\bar\alpha)=\Phi(\alpha)-M$.

%
%

In this way we see that there exists $\bar s<0$ such that
$$w(s)=\begin{cases}
1+\alpha s &\quad s>0 \\
\bar{\alpha} (s-\bar{s}) &\quad s\leq \bar{s}.
\end{cases}$$
Let $w^{\ep_j}(x_1)=\ep_j w\big(\frac{x_1}{\ep_j}
-\frac{b_{\ep_j}}{\bar{\alpha} \ep_j}+\bar{s}\big)$, then
$$w^{\ep_j}(0)=\ep_j w\big(-\frac{b_{\ep_j}}{\bar{\alpha}
\ep_j}+\bar{s}\big)=\ep_j \bar{\alpha}
\big(\bar{s}-\frac{b_{\ep_j}}{\bar{\alpha}
\ep_j}-\bar{s}\big)=-b_{\ep_j}$$ and ${w^{\ep_j}}'(s)\leq \alpha$.
Therefore, $w^{\ep_j}\leq u-b_{\ep_j}$ in $\mathbb{R}$, so that,
$w^{\ep_j}\leq v^{\ep_j}$ on $\partial \mathcal{R}_2$ and since
${w^{\ep_j}}'\leq \alpha\leq \Phi^{-1}\big( \frac{g_0}{\delta}M
\big)$ we have, by the comparison principle below (Lemma
\ref{compara}),  that $w^{\ep_j}\leq v^{\ep_j}$ in
$\mathcal{R}_2$. We can conclude as in the previous case that,
$u\leq v$ in $\mathcal{R}_2$. }

\bigskip

{\bf Step 2}. Let $\mathcal{R}^+=\{x: 0<x_1<1, |x'|<1\}.$ Define,
$$F_j=\int_{\partial \mathcal{R}^+\cap\{x_1=1\}} F(|\nabla
v^{\ep_j}|) (v_{x_1}^{\ep_j})^2\, dx' + \int_{\partial
\mathcal{R}^+\cap\{|x'|=1\}} F(|\nabla v^{\ep_j}|) v_{n}^{\ep_j}
v_{x_1}^{\ep_j}\, dS,$$ where ${v_n}^{\ep_j}$ is the exterior
normal of $v^{\ep_j}$ on $\partial\mathcal{R}^+\cap\{|x'|=1\}$. We
first want to prove that,
$$F_j\leq \int_{\partial\mathcal{R}^+\cap\{x_1=1\}} \Big(G(
|\nabla v^{\ep_j}|) +B_{\ep_j}(v^{\ep_j})\Big)\, dx'.
$$

In order to prove it, we proceed as in the proof of Lemma
\ref{sale}. This is, we can suppose that $F(s)\geq c>0$, by using
an approximation argument. Therefore, we can suppose that
$v^{\ep_j}\in W^{2,2}(\mathcal{R}_2)$. Multiplying equation
$(P_{\ep_j})$ by $v^{\ep_j}_{x_1}$ in $\mathcal{R}^+$ and using
the definitions of $G$ and $F$ we have,

\begin{align*}E_j&:=\int\int_{\mathcal{R}^+} \frac{\partial}{\partial
x_1}\Big(G( |\nabla v^{\ep_j}|)\Big) \, dx
=\int\int_{\mathcal{R}^+} F(|\nabla v^{\ep_j}|) \nabla v^{\ep_j}
\nabla v^{\ep_j}_{x_1}\, dx\\&=\int\int_{\mathcal{R}^+}
\mbox{div}(F(|\nabla v^{\ep_j}|) \nabla v^{\ep_j}
v_{x_1}^{\ep_j})\, dx -\int\int_{\mathcal{R}^+}
\beta_{\ep_j}(v^{\ep_j}) v^{\ep_j}_{x_1}=:H_j-G_j.\end{align*}
Using the divergence theorem and the fact that
$v^{\ep_j}_{x_1}(0,x')=0$ (by the symmetry in the $x_1$ variable)
we find that, $H_j=F_j.$

From the convergence of $v^{\ep_j}\rightarrow u=\alpha |x_1|$ in
$\mathcal{R}_2$ and Proposition  \ref{nose} we have that
$$\nabla v^{\ep_j}\rightarrow \alpha e_1 \quad \mbox{ a.e in
} \mathcal{R}^+_2=\mathcal{R}_2\cap \{x_1>0\}.$$ Since $|\nabla
v^{\ep_j}|$ are uniformly bounded, from the dominate convergence
theorem we deduce that,
\begin{equation}\label{alphaM1}
\lim_{j\to\infty}
F_j=\int_{\partial\mathcal{R}^+\cap\{x_1=1\}}g(\alpha)\alpha\, dx'
\end{equation}
and
\begin{align*} F_j&=E_j+G_j\\ &=
\int\int_{\mathcal{R}^+} \frac{\partial}{\partial x_1}\Big(G(
|\nabla v^{\ep_j}|)+B_{\ep_j}(v^{\ep_j})\Big) \, dx\\ &=
\int_{\partial\mathcal{R}^+\cap\{x_1=0\}} -\Big(G( |\nabla
v^{\ep_j}|)+B_{\ep_j}(v^{\ep_j})\Big)\, dx'+
\int_{\partial\mathcal{R}^+\cap\{x_1=1\}} \Big(G( |\nabla
v^{\ep_j}|)+B_{\ep_j}(v^{\ep_j})\Big)\, dx'\\& \leq
\int_{\partial\mathcal{R}^+\cap\{x_1=1\}} \Big(G( |\nabla
v^{\ep_j}|)+B_{\ep_j}(v^{\ep_j})\Big) \, dx'.
\end{align*}
Using again that $v^{\ep_j}\to u=\alpha |x_1|$ uniformly on
compact subsets of $\mathcal{R}_2$, we have that $|\nabla
v^{\ep_j}|\to \alpha$ uniformly  on $\partial
\mathcal{R}^+\cap\{x_1=1\}$ and $B_{\ep_j}(v^{\ep_j})=M$ on this
set for $j$ large. Therefore,
\begin{equation}\label{alphaM2}
\limsup_{j\to\infty} F_j\leq
\int_{\partial\mathcal{R}^+\cap\{x_1=1\}} (G(\alpha)+M)\
dx'.\end{equation}

Thus, from \eqref{alphaM1} and \eqref{alphaM2} we obtain
$\Phi(\alpha)\leq M$ which is a contradiction.

\end{proof}

Now, we prove the comparison principle needed in the proof of the
lemma above. This is the step  where we  need an additional
hypothesis: {There exist $\eta_0>0$ such that,
\begin{equation}\label{condderivadag}
g'(t)\le s^2 g'(ts) \quad \mbox{if}\quad 1\leq s\leq 1+\eta_0
\quad \mbox{and}\quad 0<t\le
\Phi^{-1}\Big(\frac{g_0}{\delta}M\Big).
\end{equation}}

\begin{remark}
We remark that condition \eqref{condderivadag} holds for all the
examples of functions $g$ satisfying condition \eqref{cond}
considered in the Introduction.

This is immediate when $g$ is a positive power or the sum of
positive powers.

If $g(t)=t^{a}\mbox{log\,}(b+ct)$, we have for $s\ge1$,
$$
s^2g'(ts)=
s^{a+1}at^{a-1}\mbox{log\,}(b+cts)+s^{a+2}\frac{ct^a}{b+cts}\ge
\Big[at^{a-1}\mbox{log\,}(b+ct) +\frac{s ct^a}{b+cts}\Big].
$$
Since
$$
g'(t)=at^{a-1}\mbox{log\,}(b+ct)+\frac{ct^a}{b+ct},
$$
condition \eqref{condderivadag} holds if
$$
\frac s{b+cts}\ge\frac 1{b+ct}.
$$
Or, equivalently
$$
sb+cst\ge b+cst,
$$
and this last inequality holds for $s\ge1$.

Finally, if $g\in C^1(\R)$,  $g(t)=c_1t^{a_1}$ for $t\le k$,
$g(t)=c_2t^{a_2}+c_3$ for $t>k$ we have
$$
s^2g'(ts)=\begin{cases}
s^{a_1+1}a_1c_1t^{a_1-1}\quad&\mbox{if }st\le k\\
s^{a_2+1}a_2c_2t^{a_2-1}\quad&\mbox{if }st\ge k.
\end{cases}
$$
So that,
\begin{enumerate}
\item If $t\ge k$, then $ts\ge k$ and
$$
s^2g'(ts)=s^{a_2+1}a_2c_2t^{a_2-1}\ge a_2c_2t^{a_2-1}=g'(t).
$$

\item If $ts\le k$ (i. e. $t\le  k/s$), we have, in particular,
that $t\le k$ and
$$
s^2g'(ts)=s^{a_1+1}a_1c_1t^{a_1-1}\ge a_1c_1t^{a_1-1}=g'(t).
$$

\item If $ k/s<t<k$ there holds that
$s^2g'(ts)=s^{a_2+1}a_2c_2t^{a_2-1}$ and $g'(t)=a_1c_1t^{a_1-1}$.
Therefore, condition \eqref{condderivadag} is equivalent to
\begin{equation}\label{cond1}
s^{a_2+1}\ge \frac{a_1c_1}{a_2c_2}t^{a_1-a_2}.
\end{equation}
Observe that the condition that $g'$ be continuous implies that
$\frac{a_1c_1}{a_2c_2}=k^{a_2-a_1}$. Thus, \eqref{cond1} is
equivalent to
\begin{equation}\label{cond2}
s^{a_2+1}\ge \Big(\frac tk\Big)^{a_1-a_2}.
\end{equation}

We consider two cases.
\begin{enumerate}
\item If $a_1\ge a_2$, \eqref{cond2} holds since $t<k$ and
$s\ge1$.

\item If $a_1<a_2$, as $t>k/s$ there holds that,
$$
\Big(\frac tk\Big)^{a_1-a_2}<\frac 1{s^{a_1-a_2}}\le s^{a_2+1},
$$
because $\frac 1{s^{a_1}}\le s$ since $s\ge1$.
\end{enumerate}

\end{enumerate}
\end{remark}


Let us now prove the comparison lemma used in the proof of
Proposition \ref{gammapositiva}.

\begin{lema}\label{compara}
{Let $w^{\ep}(x_1)$ in $C^2(\mathbb{R})$ such that
${w^{\ep}}'(x_1)\geq \bar{\alpha}>0$ and $v^{\ep}(x)\geq 0$ a
solution of $\mathcal{L}v^{\ep}=\beta_{\ep}(v^{\ep})$ in
$\mathcal{R}=\{x=(x_1,x'): a<x_1<b, |x'|<r\}$, continuous up to
$\partial \mathcal{R}$. Then,  the following comparison principle
holds: if $v^{\ep}(x)\geq w^{\ep}(x_1)$ for all $x\in
\partial \mathcal{R}$ and if,

\begin{enumerate}

\item
 $\mathcal{L}(w^{\ep})\geq \frac{g_0}{\delta}\beta_{\ep}(w^{\ep})$ on $\mathbb{R}$,

\medskip
or
\medskip

\item  $\mathcal{L}w^{\ep}\ge \beta_{\ep}(w^{\ep})$,
${w^{\ep}}'\leq \Phi^{-1}(\frac{g_0}{\delta}M)$ and $g'$ satisfies
condition \eqref{condderivadag},

\end{enumerate}
then, $v^{\ep}(x)\geq w^{\ep}(x_1)$ for all $x\in \mathcal{R}$. }

\end{lema}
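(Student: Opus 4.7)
The plan is to prove $v^\ep\geq w^\ep$ by a sliding argument in the $x_1$-direction combined with the strong maximum principle.

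For $\tau\geq 0$, set $w^\ep_\tau(x_1):=w^\ep(x_1-\tau)$. By translation invariance of $\L$ and $\beta_\ep$, $w^\ep_\tau$ inherits the subsolution inequality $\L w^\ep_\tau\geq \beta_\ep(w^\ep_\tau)$ on $\R$ — this holds in both hypotheses, since $g_0/\delta\geq 1$ in case (1). Because $(w^\ep)'\geq\bar\alpha>0$, $w^\ep$ is strictly increasing and $w^\ep_\tau(x)\to-\infty$ as $\tau\to+\infty$, so for $\tau$ large $v^\ep\geq 0>w^\ep_\tau$ on $\mathcal{R}$; on $\partial\mathcal{R}$, $v^\ep\geq w^\ep>w^\ep_\tau$ strictly for every $\tau>0$. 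Hence $\tau^*:=\inf\{\tau\geq 0:v^\ep\geq w^\ep_\tau\text{ in }\mathcal{R}\}$ is finite, and the goal is to show $\tau^*=0$.

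Suppose for contradiction $\tau^*>0$. By continuity $v^\ep\geq w^\ep_{\tau^*}$ in $\mathcal{R}$ with equality at some point $x_0$, which must be interior because of the strict boundary inequality. At $x_0$, $\nabla v^\ep(x_0)=\nabla w^\ep_{\tau^*}(x_0)=(w^\ep)'(x_{0,1}-\tau^*)\,e_1\neq 0$. By the interior $C^{1,\alpha}$ regularity of \cite{Li1}, $\nabla v^\ep$ is continuous, so on a small ball $B_r(x_0)\subset\mathcal{R}$ both $|\nabla v^\ep|$ and $|\nabla w^\ep_{\tau^*}|$, as well as their convex combinations, remain in a compact subset of $(0,\infty)$. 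Writing
\[
A(\nabla v^\ep)-A(\nabla w^\ep_{\tau^*})=\bar a(x)\,\nabla u,\qquad u:=v^\ep-w^\ep_{\tau^*}\geq 0,
\]
with $\bar a_{ij}(x):=\int_0^1 a_{ij}\bigl((1-t)\nabla w^\ep_{\tau^*}(x)+t\nabla v^\ep(x)\bigr)\,dt$, condition \eqref{cond} shows $\bar a$ is bounded and uniformly elliptic on $B_r(x_0)$; thus $u$ is a nonnegative weak solution of
\[
\mathrm{div}\bigl(\bar a(x)\nabla u\bigr)=\L v^\ep-\L w^\ep_{\tau^*}\leq \beta_\ep(v^\ep)-\beta_\ep(w^\ep_{\tau^*})\leq C_\ep\,u\quad\text{in }B_r(x_0),
\]
where $C_\ep$ denotes the Lipschitz constant of $\beta_\ep$.

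The strong maximum principle for uniformly elliptic divergence-form equations with bounded measurable coefficients (via the De Giorgi--Nash--Moser Harnack inequality) then forces $u\equiv 0$ on $B_r(x_0)$. Since $(w^\ep)'\geq\bar\alpha$, the same argument applies at any point of $\{u=0\}\cap\mathcal{R}$, so this set is open in $\mathcal{R}$; it is closed by continuity, and $\mathcal{R}$ is connected, so $u\equiv 0$ throughout $\mathcal{R}$, contradicting $u>0$ on $\partial\mathcal{R}$. Hence $\tau^*=0$ and $v^\ep\geq w^\ep$ in $\mathcal{R}$.

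The main technical obstacle is guaranteeing uniform ellipticity of the linearized operator in a neighborhood of the touching point, for which the hypothesis $(w^\ep)'\geq\bar\alpha>0$ together with \eqref{cond} is essential — this is what lets the interpolated gradients $(1-t)\nabla w^\ep_{\tau^*}+t\nabla v^\ep$ stay bounded away from $0$ in some $B_r(x_0)$. The two alternative sets of hypotheses in the statement correspond to two admissible routes for \emph{constructing} the one-dimensional subsolution $w^\ep$ in the proof of Proposition~\ref{gammapositiva}; the comparison itself only uses the common consequence $\L w^\ep\geq \beta_\ep(w^\ep)$, so the slope bound and \eqref{condderivadag} in case (2) enter at the level of the ODE construction of $w^\ep$ rather than of the comparison argument.
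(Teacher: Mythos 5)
Your argument is correct in outline and, as far as I can check, in substance, but it is genuinely different from the paper's proof at the decisive step. The paper does not slide $w^{\ep}$ itself: it first replaces it by the perturbation $w^{\ep,\eta}(x_1)=w^{\ep}(\varphi_{\eta}(x_1-c_{\eta}))$, $\varphi_{\eta}(s)=s+\eta s^2$, in order to manufacture a \emph{strict} subsolution $\L w^{\ep,\eta}>\beta_{\ep}(w^{\ep,\eta})$; this is exactly where the two alternative hypotheses enter (the factor $g_0/\delta$ in case (1), the slope bound together with \eqref{condderivadag} in case (2)), because composing with $\varphi_{\eta}$ rescales the gradient by $\widetilde{\varphi}_{\eta}'\in[1,1+\eta_0]$ and one must control $g'$ under that rescaling. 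At the touching point of the slide the paper then freezes the coefficients $a_{ij}$ at $\nabla w^{\ep,\eta}$, upgrades $v^{\ep}$ to $C^2$ nearby by Schauder theory (possible since the common gradient is nonzero), and gets a contradiction from a pointwise second-derivative argument, the strictness making the $\beta_\ep$-terms cancel exactly at the contact point. You instead slide $w^{\ep}$ directly and absorb $\beta_{\ep}(v^{\ep})-\beta_{\ep}(w^{\ep}_{\tau^*})$ through the Lipschitz bound, writing $u=v^{\ep}-w^{\ep}_{\tau^*}\ge0$ as a weak supersolution of a uniformly elliptic divergence-form operator with a bounded zeroth-order coefficient of the favorable sign, and conclude with the De Giorgi--Nash--Moser weak Harnack/strong maximum principle plus a connectedness continuation; the nondegeneracy ${w^{\ep}}'\ge\bar\alpha>0$ plays the same role in both proofs, namely it keeps the (frozen or averaged) coefficients uniformly elliptic near every contact point, via \eqref{cond}. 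What each approach buys: the paper's route stays entirely classical and pointwise (no linear theory with lower-order terms), but pays with the structural conditions on $g'$; your route uses only the common consequence $\L w^{\ep}\ge\beta_{\ep}(w^{\ep})$ together with $\beta\in\mathrm{Lip}$ and the interior $C^{1,\alpha}$ regularity of $v^\ep$ from \cite{Li1}, so, once the routine details are written out (existence of an interior contact point for $\tau^*>0$, the mean-value representation $A(\nabla v^{\ep})-A(\nabla w^{\ep}_{\tau^*})=\bar a\,\nabla u$ on a ball where the gradient segment avoids the origin, and the weak Harnack inequality with a bounded zeroth-order term), it would actually show that \eqref{condderivadag} and the slope restriction in case (2) are not needed for the comparison itself. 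Note, however, that your closing remark misreads the paper on this point: in the paper those hypotheses are used \emph{inside} this very proof (for the strict perturbation), not merely to construct the one-dimensional profile in Proposition \ref{gammapositiva}.
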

\begin{proof}

{Since ${w^{\ep}}'(x_1)\geq \bar{\alpha}$ there exist $x_0$ such
that $w^{\ep}(x_0)=0$. Let us suppose that $x_0=0$.}
%
Since $v^{\ep}(x)\geq 0$, we can find $\tau$ such that,
$$w^{\ep}(x_1-\tau)<v^{\ep}(x) \quad \mbox{ on }
\bar{\mathcal{R}}.$$ For $\eta>0$ sufficiently small define,
$$w^{\ep,\eta}(x_1):=w^{\ep}(\varphi_{\eta}(x_1-c_{\eta})),$$ where
$\varphi_{\eta}(s)=s+\eta s^2$ and $c_{\eta}>0$ is the smallest
constant such that $\varphi_{\eta}(s-c_{\eta})\leq s$ on
$[-2\tau,2\tau]$ (observe that $c_{\eta}\to 0$ when $\eta\to 0$).
If $c_\eta-\frac{1}{\eta}\leq -2\tau$ then
$\varphi_\eta(s-c_{\eta})\leq 0$ for $s\leq c_\eta$. Observe that,
in $[-2\tau,2\tau]$, $w^{\ep,\eta}\leq w^{\ep}$ and, as $\eta\to
0$, $w^{\ep,\eta}\rightarrow w^{\ep}$ uniformly.

If we call $\widetilde{\varphi}_{\eta}(s)=
\varphi_{\eta}(s-c_{\eta})$,  we have,
\begin{equation}\label{formulafin}
\mathcal{L}w^{\ep,\eta}= g'({w^{\ep}}'(\widetilde{\varphi}_{\eta})
\widetilde{\varphi}'_{\eta})
{w^{\ep}}''(\widetilde{\varphi}_{\eta})
(\widetilde{\varphi}'_{\eta})^2+
g'({w^{\ep}}'(\widetilde{\varphi}_{\eta})
\widetilde{\varphi}'_{\eta})
{w^{\ep}}'(\widetilde{\varphi}_{\eta})
\widetilde{\varphi}_{\eta}''.
\end{equation}

{In the first case we use that, by condition \eqref{cond},  we
have for $s\geq 1$,
$$g'(ts)\geq \delta \frac{g(ts)}{ts} \geq \delta
\frac{g(t)}{ts}\geq \frac{\delta g'(t)}{g_0 s}.$$ Therefore
\begin{equation}\label{sgprima}
s^2 g'(ts)\geq \frac{\delta} {g_0} sg'(t).\end{equation}

Taking $s={{\widetilde{\varphi}}_{\eta}}'$ and
$t={w^{\ep}}'({{\widetilde{\varphi}}_{\eta}}')$, using
\eqref{formulafin}, the fact that ${{\varphi}_{\eta}}''>0$,
${w^{\ep}}'>0$ and \eqref{sgprima} we have,
$$\mathcal{L}w^{\ep,\eta}> \frac{\delta}{g_0} g'({w^{\ep}}'
(\widetilde{\varphi}_{\eta})){w^{\ep}}''(\widetilde{\varphi}_{\eta})
{{\widetilde{\varphi}}_{\eta}}'=\frac{\delta}{g_0}
\mathcal{L}w^{\ep}(\widetilde{\varphi}_{\eta}){{\widetilde{\varphi}}_{\eta}}'
\geq \beta_{\ep}(w^{\ep,\eta}){{\widetilde{\varphi}}_{\eta}}'.$$
Since, $\beta_{\ep}({w}^{\ep,\eta})=0$ when $x_1\leq c_{\eta}$ and
$\widetilde{\varphi}'_{\eta}\geq 1$ when $x_1\geq c_{\eta}$, we
have that $\mathcal{L}w^{\ep,\eta}>\beta_{\ep}({w}^{\ep,\eta})$.

\medskip

For the second case, choose $\eta$ small enough so that
$0<c_\eta\le 1$ and ${\widetilde{\varphi}_{\eta}}'(r)\leq
1+\eta_0$ for $a<r<b$.

If $x_1<c_\eta$, we proceed as in the previous case and deduce
that $\L(w^{\ep,\eta})>0=\beta_\ep({w}^{\ep,\eta})$.

If $x_1\ge c_\eta$, we can apply
 condition \eqref{condderivadag} with $s={{\widetilde{\varphi}}_{\eta}}'$ and
$t={w^{\ep}}'({{\widetilde{\varphi}}_{\eta}}')$ since
${w^{\ep}}'\leq \Phi^{-1}(\frac{g_0}{\delta}M)$.

 Then,  using that
 ${{\varphi}_{\eta}}''>0$,
${w^{\ep}}'>0$ and \eqref{formulafin} we have,
\begin{align*}
\mathcal{L}w^{\ep,\eta}> g'({w^{\ep}}'
(\widetilde{\varphi}_{\eta}))
{w^{\ep}}''(\widetilde{\varphi}_{\eta})
=\mathcal{L}w^{\ep}(\widetilde{\varphi}_{\eta})\ge
\beta_{\ep}({w}^{\ep,\eta}) .
\end{align*}
}

Summarizing, in both cases we have,
\begin{align*}
\mathcal{L}w^{\ep,\eta}>\beta_{\ep}(w^{\ep,\eta}),\quad
w^{\ep,\eta}\rightarrow w^{\ep} \mbox{ as } \eta\to 0\quad \mbox{
and } w^{\ep,\eta}\leq w^{\ep}.
\end{align*}
Let now $\tau^*\geq 0$ the smallest constant such that
$$w^{\ep,\eta}(x_1-\tau^*)\leq v^{\ep}(x)\quad \mbox{ on }
\overline{\mathcal{R}}.$$ We want to prove that $\tau^*=0$. By the
minimality of $\tau^*$, there exists a point $x^*\in
\overline{\mathcal{R}}$ such that
$w^{\ep,\eta}(x_1^*-\tau^*)=v^{\ep}(x^*)$. If $\tau^*>0$, then
$w^{\ep,\eta}(x_1-\tau^*)<w^{\ep,\eta}(x_1)\leq w^{\ep}(x_1)\leq
v^{\ep}(x)$ on $\partial \mathcal{R}$, and hence, $x^*$ is an
interior point of $\mathcal{R}$.

At this point observe that the gradient of
$w^{\ep,\eta}(x_1-\tau^*)$  does not vanish and
$\mathcal{L}w^{\ep,\eta}(x_1^*-\tau^*)>\beta_{\ep}({w}^{\ep,\eta}(x^*-\tau^*))=
\beta_{\ep}({v}^{\ep}(x^*))=\mathcal{L}v^{\ep}(x^*)$. We also have
$w^{\ep,\eta}(x_1-\tau^*)\leq v^{\ep}(x)$ in $\mathcal{R}$ and
$w^{\ep,\eta}(x_1^*-\tau^*)=v^{\ep}(x^*)$. Then, also $\nabla
w^{\ep,\eta}(x_1^*-\tau^*)=\nabla v^{\ep}(x^*)$.

Let, $$L v=\sum_{i,j=1}^N a_{ij}(\nabla w^{\ep,\eta}(x_1-\tau^*))
v_{x_i x_j}.$$

 Since
 $|\nabla w^{\ep,\eta}(x_1-\tau^*)|>0$ near $x^*$, $L$ is well defined near the point $x^*$ and,
 by condition \eqref{cond}, $L$ is uniformly elliptic.

 Since $\nabla w^{\ep,\eta}(x_1^*-\tau^*)=\nabla v^{\ep}(x^*)$,
  we have that
  $$L w^{\ep,\eta}(x_1^*-\tau^*)=\L w^{\ep,\eta}(x_1^*-\tau^*)>\L \vep(x^*)=\sum_{i,j=1}^N
a_{ij}(\nabla \vep(x^*)) \vep_{x_i x_j}=
   L v^{\ep}(x^*).
   $$

   Moreover, since $\vep$ is a solution
  to
  $$\widetilde{L}v:=\sum_{i,j=1}^N
a_{ij}(\nabla \vep(x)) v_{x_i x_j}=\beta_\ep(v),$$
  $\widetilde{L}$ is uniformly elliptic in a neighborhood of $x^*$ with H\"older continuous
  coefficients and $\beta_\ep(\vep)\in Lip$, there holds that
  $\vep\in C^2$ in a neighborhood of $x^*$.

 Therefore,  we have for some $\delta>0$,
$$
\begin{cases}
L w^{\ep,\eta}(x_1-\tau^*)> L v^{\ep}(x) &\quad \mbox{ in } B_{\delta}(x^*)\\
w^{\ep,\eta}(x_1^*-\tau^*)= v^{\ep}(x^*)\\
w^{\ep,\eta}(x_1-\tau^*)\leq  v^{\ep}(x) &\quad \mbox{ in }
\overline{ \mathcal{R} }.
\end{cases}
$$

But these three statements contradict the strong maximum
principle. Therefore $\tau^*=0$ and thus, $w^{\ep,\eta}\leq
v^{\ep}$ on $\overline{\mathcal{R}}$.

Letting $\eta\to 0$ we obtain the desired result.

\end{proof}

\section{Asymptotic Behavior of Limit Solutions}
\setcounter{equation}{0}

Now we want to prove  --for $g$ satisfying conditions \eqref{cond}
and  \eqref{condderivadagintro2}-- the asymptotic development of
the limiting function $u$. We will obtain this result,  under
suitable assumptions on the function $u$. First we give the
following,
\begin{defi}\label{nondegener} Let $v$ be a continuous nonnegative function
in a domain $\Omega\subset\mathbb{R}^N$. We say that $v$ is
non-degenerate at a point  $x_0\in \Omega\cap\{v=0\}$ if there
exist $c$, $r_0>0$ such that
$$\frac{1}{r^N} \int_{B_r(x_0)} v\, dx\geq c r\quad \mbox{ for } 0<r\leq r_0$$

\medskip

We say that $v$ is uniformly non degenerate in a set
$\Omega'\subset\Omega\cap\{v=0\}$ if the constants $c$ and $r_0$
can be taken independent of the point $x_0\in\Omega'$.
\end{defi}
We have the following,

\begin{teo}\label{teoasim}
Suppose that $g$ satisfies  conditions \eqref{cond} and
\eqref{condderivadagintro2}. Let $u^{\ep_j}$ be a solution to
$(P_{\ep_j})$ in a domain $\Omega\subset \mathbb{R}^N$ such that
$u^{\ep_j}\rightarrow u$ uniformly on compact subsets of $\Omega$
and $\ep_j\to 0$. Let $x_0\in \Omega \cap\partial\{u>0\}$ be such
that $\partial\{u>0\}$ has an inward unit normal $\nu$ in the
measure theoretic sense at $x_0$, and suppose that $u$ is
non-degenerate at $x_0$. Under these assumptions, we have
$$u(x)=\Phi^{-1}(M) \langle x-x_0,\nu\rangle^+ +o(|x-x_0|).$$
\end{teo}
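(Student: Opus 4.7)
\textbf{The plan} is to perform a blow-up analysis at $x_0$. After translation and rotation we may assume $x_0 = 0$ and $\nu = e_N$. For $\lambda_n \to 0^+$ set $u_{\lambda_n}(x) = u(\lambda_n x)/\lambda_n$. By the uniform Lipschitz bound in Corollary \ref{gradsing}, the family $\{u_{\lambda_n}\}$ is equicontinuous on compacts, and Arzel\`a--Ascoli yields a subsequence converging locally uniformly to some Lipschitz $U \geq 0$ with $U(0) = 0$. Using Lemma \ref{bolwepj} I would pick indices $j_n \to \infty$ so that $\ep_{j_n}/\lambda_n \to 0$ and the rescaled solutions $v^{(n)}(x) := u^{\ep_{j_n}}(\lambda_n x)/\lambda_n$, which solve the singular perturbation problem with parameter $\ep_{j_n}/\lambda_n$, also converge locally uniformly to $U$.

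\textbf{Structure of the blow-up.} The measure-theoretic normal condition at $x_0$ reads
$$
\lim_{r \to 0^+}\frac{|B_r \cap (\{u > 0\} \triangle \{x_N > 0\})|}{r^N} = 0.
$$
Rescaling and using that $u_{\lambda_n} \to U$ uniformly on compacts, together with the openness of $\{U > 0\}$ and continuity of $U$, one first obtains that $U \equiv 0$ on the half-space $\{x_N \leq 0\}$. Rescaling the non-degeneracy of $u$ at $x_0$ gives
$$
\frac{1}{r^N}\int_{B_r} U\,dx \geq c\,r \quad \mbox{for every } r > 0,
$$
so $U \not\equiv 0$, and the positivity set of $U$ is a nonempty open subset of $\{x_N > 0\}$ where, by Proposition \ref{nose}(2), $\L U = 0$. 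A Harnack/barrier argument combined with the measure-theoretic statement (which prevents $\{U = 0\}$ from having nonempty interior in $\{x_N > 0\}$) promotes this to the identification $\{U > 0\} = \{x_N > 0\}$.

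\textbf{Rigidity and identification of the slope.} With $U$ Lipschitz, $U \geq 0$, $\L U = 0$ in $\{x_N > 0\}$ and $U \equiv 0$ on $\{x_N \leq 0\}$, the next step is the rigidity statement $U(x) = \alpha x_N^+$ for some $\alpha \geq 0$. I would set $\alpha^* := \sup_{x_N > 0} U/x_N$ (finite by Lipschitz regularity) and compare $U$ with the $\L$--harmonic planar profile $\alpha^* x_N$. Since $\alpha^* x_N$ has nonvanishing gradient, the difference $\alpha^* x_N - U \geq 0$ satisfies a uniformly elliptic linear equation (thanks to \eqref{cond}), and the strong minimum principle forces $U \equiv \alpha^* x_N^+$ in the half-space. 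Proposition \ref{alphamenorphiM} applied to $v^{(n)} \to \alpha^* x_N^+$ then gives $\alpha^* \in \{0, \Phi^{-1}(M)\}$, and the non-degeneracy bound above rules out $\alpha^* = 0$. The value $\alpha^* = \Phi^{-1}(M)$ being independent of the subsequence, every blow-up limit is the same linear profile, so $u_{\lambda} \to \Phi^{-1}(M)\, x_N^+$ for the full family $\lambda \to 0^+$, yielding the claimed development $u(x) = \Phi^{-1}(M)\langle x - x_0, \nu \rangle^+ + o(|x - x_0|)$.

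\textbf{Main obstacle.} The decisive step is the rigidity argument showing every blow-up is a one-dimensional linear profile. For $\L = \Delta$ this is classical (odd reflection plus Liouville), but for the quasilinear, possibly degenerate or singular, operator $\L$ under \eqref{cond} and \eqref{condderivadagintro2} one must combine the measure-theoretic normal, the non-degeneracy hypothesis and the comparison tools developed in Lemma \ref{compara} and \cite{MW1} in order to rule out non-planar global profiles. The strong $L^{g_0+1}$ convergence of the gradients in Proposition \ref{nose}(4) and Lemma \ref{bolwepj}(2)--(3) will be essential for passing the weak form of the singular perturbation equation to the blow-up limit with enough precision to invoke Proposition \ref{alphamenorphiM}.
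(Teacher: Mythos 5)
You correctly identify the blow-up strategy, the role of Proposition \ref{alphamenorphiM} in classifying the slope, and the use of non-degeneracy to exclude the zero slope; and you are right to flag the rigidity of the blow-up limit as the decisive step. However, the sketch you give for that step does not close. Setting $\alpha^* = \sup_{x_N>0} U/x_N$ and comparing with $\alpha^* x_N$ fails because the supremum need not be attained in the half-space, so there is no interior touching point at which to apply the strong maximum principle; and, more subtly, the difference $\alpha^* x_N - U$ only satisfies a non-divergence uniformly elliptic equation on regions where $\nabla U$ is bounded away from zero — condition \eqref{cond} controls the ellipticity ratio but does not prevent the coefficients from being undefined where $\nabla U$ vanishes, and $U$ is a priori only Lipschitz. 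Your preliminary claim that the measure-theoretic normal plus a Harnack/barrier argument gives $\{U>0\} = \{x_N>0\}$ is also not directly supported (the rescaled solutions can be positive but vanishing on a set of positive proportion of $\{x_N>0\}$ in the limit), and it is in fact unnecessary for the paper's route.

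The paper resolves the rigidity issue by a different and more indirect mechanism. Rather than proving globally that $U$ is a half-plane profile, it first invokes Lemma \ref{development11} — a barrier-based asymptotic development for nonnegative $\L$-solutions vanishing on a hyperplane, proved in the appendix via Lemma \ref{psub2} — to obtain only the local statement $U(x) = \alpha x_1^+ + o(|x|)$. It then performs a \emph{second} blow-up $U_\lambda \to \alpha x_1^+$, applies Lemma \ref{bolwepj} twice (once to connect the original $u^{\ep_j}$ to $U$, and once more to connect approximating solutions to $\alpha x_1^+$), and runs the test-function computation of Proposition \ref{alphamenorphiM} on this second, purely linear, blow-up. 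Only after $\alpha = \Phi^{-1}(M)>0$ is identified does it invoke $C^{1,\alpha}$ regularity of $U$ up to $\{x_1=0\}$ to obtain $\nabla U(0) = \Phi^{-1}(M)\,e_1\neq 0$, combine this with the global bound $|\nabla U|\le \Phi^{-1}(M)$ from Theorem \ref{anterior}, and apply Hopf's boundary point lemma to $w = U - \Phi^{-1}(M)x_1$ at the origin, where the nonvanishing of the gradient is now guaranteed. This circumvents both the sup-attainment problem and the possible degeneracy of the linearized operator in your sketch. To repair your plan you would need to restructure it along these lines: establish a local asymptotic development of $U$ at the origin first, then pass to the second blow-up, and place the maximum-principle comparison at the boundary point $0$ (where the slope is known nonzero) rather than at an interior extremum.
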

The proof of this theorem makes strong use of the following
result,

\begin{teo}\label{anterior}
Suppose that $g$ satisfies  conditions \eqref{cond} and
\eqref{condderivadagintro2}. Let $u^{\ep_j}$ be a solution to
$(P_{\ep_j})$ in a domain $\Omega\subset\mathbb{R}^N$ such that
$u^{\ep_j}\rightarrow u$ uniformly in compact subsets of $\Omega$
and $\ep_j\to 0$. Let $x_0\in\Omega\cap\partial\{u>0\}$. Then,
$$\limsup_{\stackrel{x\to x_0}{u(x)>0}} |\nabla u(x)|\leq \Phi^{-1}(M).$$
\end{teo}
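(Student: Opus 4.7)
I argue by contradiction. Suppose $\limsup_{x\to x_0,\,u(x)>0}|\nabla u(x)|>\Phi^{-1}(M)$, so there exist $x_n\to x_0$ with $u(x_n)>0$ and $|\nabla u(x_n)|\to \alpha>\Phi^{-1}(M)$; note that $\nabla u$ is well defined on $\{u>0\}$ by Lieberman's $C^{1,\alpha}$ theory applied to the homogeneous equation $\L u=0$ (Proposition \ref{nose}(2)). Let $d_n=\mbox{dist}(x_n,\partial\{u>0\})$ and pick $y_n\in\partial\{u>0\}$ with $|x_n-y_n|=d_n$. Since $u(x_0)=0$ by continuity, $d_n\le|x_n-x_0|\to 0$ and $y_n\to x_0$. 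Rescale: set $U_n(x):=u(y_n+d_n x)/d_n$ and $\tilde x_n:=(x_n-y_n)/d_n$, so $|\tilde x_n|=1$, $U_n(0)=0$, and $\L U_n=0$ in $B_1(\tilde x_n)\subset\{U_n>0\}$. The $U_n$ are uniformly Lipschitz by Proposition \ref{nose}, hence uniformly bounded on compacts. By Lieberman's interior regularity, after passing to a subsequence, $\tilde x_n\to\tilde x_0$ with $|\tilde x_0|=1$, $U_n\to U$ in $C^1_{\mathrm{loc}}(B_1(\tilde x_0))$ and uniformly on compacts of $\R^N$; in particular $|\nabla U(\tilde x_0)|=\alpha$, $U(0)=0$, $B_1(\tilde x_0)\subset\{U>0\}$, and $\L U=0$ in $\{U>0\}$. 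Lemma \ref{bolwepj} furnishes $\ep_{j_n}$ with $\ep_{j_n}/d_n\to 0$ so that the analogously rescaled $u^{\ep_{j_n}}$ also converge to $U$; hence $U$ is itself a limit of singular-perturbation solutions and, consequently, a nonnegative $\L$-subsolution on $\R^N$.

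Next, I iterate. Let $\lambda_k\to 0$, set $V_k(x):=U(\lambda_k x)/\lambda_k$, and extract a convergent subsequence $V_k\to V$ on compacts of $\R^N$. A diagonal application of Lemma \ref{bolwepj} exhibits $V$ as a limit of solutions to $(P_{\tilde\ep_k})$ with $\tilde\ep_k\to 0$, and the interior tangent ball $B_1(\tilde x_0)\subset\{U>0\}$ rescales in the limit to the half-space $H=\{\langle x,\tilde x_0\rangle>0\}\subset\{V>0\}$. Applying the asymptotic development of $\L$-subsolutions proved in the Appendix to $U$ at $0$ with its interior tangent ball in direction $\tilde x_0$, I identify
\[
V(x)=\alpha_0\langle x,\tilde x_0\rangle^+ + \gamma_0\langle x,\tilde x_0\rangle^-,
\]
for some $\alpha_0>0$, $\gamma_0\ge 0$.

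To relate $\alpha_0$ and $\alpha$, I use a maximum-principle bound on $|\nabla U|$ in $B_1(\tilde x_0)$: since $\L U=0$ is uniformly elliptic wherever $\nabla U\neq 0$ by \eqref{cond}, $|\nabla U|^2$ is a subsolution of a linear uniformly elliptic equation in $B_1(\tilde x_0)$, so its supremum is attained on $\partial B_1(\tilde x_0)$. Combining this with the $C^1_{\mathrm{loc}}(H)$ convergence $V_k\to V$ and the identity $\nabla V_k(y)=\nabla U(\lambda_k y)$, which gives $|\nabla U(\lambda_k y)|\to\alpha_0$ for any fixed $y\in H$ (so the boundary slope of $U$ at $0$ along the inward normal $\tilde x_0$ is $\alpha_0$), yields $\alpha_0\ge|\nabla U(\tilde x_0)|=\alpha$. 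Finally, $V$ is a global limit of singular-perturbation solutions of precisely the form treated in Proposition \ref{alphamenorphiM} (when $\gamma_0=0$) or Proposition \ref{gammapositiva} (when $\gamma_0>0$, which is where hypothesis \eqref{condderivadagintro2} enters), forcing $\alpha_0\le\Phi^{-1}(M)$ and contradicting $\alpha_0\ge\alpha>\Phi^{-1}(M)$.

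The principal obstacle is the structural identification of $V$ as a one- or two-sided half-plane solution with the correct normal direction, which rests on the Appendix's asymptotic development of $\L$-subsolutions applied to $U$ at $0$; coupled with this is the slope comparison $\alpha_0\ge\alpha$, which reduces to a boundary-versus-interior maximum-principle estimate for $|\nabla U|$ on the interior tangent ball $B_1(\tilde x_0)$. Once both ingredients are in hand, Propositions \ref{alphamenorphiM} and \ref{gammapositiva} close the argument.
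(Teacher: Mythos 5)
Your overall strategy --- blow up at a nearest free boundary point to $x_0$, identify the blow-up limit as a half-plane or two-half-plane profile, and then invoke Propositions \ref{alphamenorphiM} and \ref{gammapositiva} --- is the same as the paper's. But the argument as written has a genuine gap in the crucial step where you try to relate the slope $\alpha_0$ of the second blow-up $V$ to $\alpha=|\nabla U(\tilde x_0)|$.

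The missing ingredient is the \emph{global} bound $|\nabla U|\leq\alpha$ on $\mathbb{R}^N$. This follows directly from the definition of $\alpha$ as a $\limsup$: for any $R>1$, $\delta>0$ one has $|\nabla u|\leq\alpha+\delta$ in a small neighbourhood of $x_0$, hence $|\nabla U_n|\leq\alpha+\delta$ on $B_R$ for $n$ large, and letting $n\to\infty$ and then $R\to\infty$, $\delta\to0$ gives the bound. With it, since $|\nabla U(\tilde x_0)|=\alpha$ and $\tilde x_0$ is an \emph{interior} point of $B_1(\tilde x_0)$, the strong maximum principle applied to the directional derivative $w=\partial_{\tilde x_0}U$ (through the nonnegative supersolution $\alpha-w$, via Harnack) forces $\nabla U\equiv\alpha\tilde x_0$ in $B_1(\tilde x_0)$ and, by continuation, $U(x)=\alpha\langle x,\tilde x_0\rangle$ on the half-space $\{\langle x,\tilde x_0\rangle\ge0\}$. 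That identity is what pins down the boundary slope as exactly $\alpha$. Your substitute --- that $|\nabla U|^2$ is a subsolution so its supremum on $\overline{B_1(\tilde x_0)}$ sits on $\partial B_1(\tilde x_0)$ --- is correct but gives no information about the asymptotic slope at the \emph{particular} boundary point $0$; the maximum could occur elsewhere on $\partial B_1(\tilde x_0)$ and $\alpha_0$ could a priori be strictly smaller than $\alpha$. The inequality ``$\alpha_0\ge\alpha$'' does not follow as you assert.

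A second, related issue: you invoke Lemma \ref{development11} directly for $U$ at $0$ ``with its interior tangent ball,'' but that lemma is stated for solutions vanishing on the flat boundary $\{x_N=0\}$ of a half-ball, not for solutions with a tangent ball. The paper avoids this by first proving, via the strong maximum principle as above, that $U$ equals the exact linear function $\alpha\langle x,\tilde x_0\rangle$ on the closed half-space --- in particular $U=0$ on the hyperplane --- and only then applies Lemma \ref{development11} on the opposite side to produce the coefficient $\gamma_0\ge0$. Without this intermediate rigidity step, neither the structural claim for $V$ nor the slope comparison is justified, and the proof does not close.
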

\begin{proof}
Let
$$
\alpha:=\limsup_{\stackrel{x\to x_0}{u(x)>0}} |\nabla u(x)|.
$$
Since $u\in Lip_{loc}(\Omega)$, $\alpha<\infty$. If, $\alpha=0$ we
are done. So, suppose that $\alpha>0$. By the definition of
$\alpha$ there exists a sequence $z_k\rightarrow x_0$ such that
$$
u(z_k)>0,\quad \quad |\nabla u(z_k)|\rightarrow \alpha.
$$
Let $y_k$ be the nearest point from $z_k$ to $\Omega \cap
\partial\{u>0\}$ and let $d_k = |z_k-y_k|$.

Consider the blow up sequence $u_{d_k}$ with respect to
$B_{d_k}(y_k)$.  This is, $u_{d_k}(x)=\frac 1{d_k}u(y_k+d_k x)$.
Since $u$ is Lipschitz, and $u_{d_k}(0)=0$ for every $k$, there
exists $u_0\in Lip(\R^N)$, such  that (for a subsequence)
$u_{d_k}\to u_0$ uniformly in compact sets of $\R^N$. And we also
have that  $\mathcal{L} u_0=0$ in $\{u_0>0\}$.

Now, set $\bar{z}_k=(z_k-y_k)/d_k\in \partial B_1$. We may assume
that  $\bar{z}_k\to \bar{z}\in \partial B_1$. Take,
$$
\nu_k:=\frac{\nabla u_{d_k}(\bar{z}_k)}{|\nabla
u_{d_k}(\bar{z}_k)|}= \frac{\nabla u({z}_k)}{|\nabla u({z}_k)|}.
$$
Passing to a subsequence and after a rotation we can assume that
$\nu_k\to e_1$. Observe that $B_{2/3}(\bar{z})\subset
B_1(\bar{z}_k)$ for $k$ large, and therefore $u_0$ is an
$\L-$solution there. By interior H\"older gradient estimates (see
\cite{Li1}), we have $\nabla u_{d_k}\to \nabla u_0$ uniformly in
$B_{1/3}(\bar{z})$, and therefore $\nabla u(z_k)\to \nabla
u_0(\bar z)$. Thus, $\nabla u_0(\bar z)=\alpha \,e_1$ and, in
particular, $\partial_{x_1} u_0(\bar{z})=\alpha$.

Next, we claim that $|\nabla u_0|\leq \alpha$ in $\R^N$. In fact,
let $R>1$ and $\delta>0$. Then, there exists, $\tau_0>0$ such that
$|\nabla u(x)|\leq \alpha+\delta$ for any $x\in B_{\tau_0
R}(x_0)$. For $|z_k-x_0|<\tau_0 R/2$ and $d_k<\tau_0/2$ we have,
$B_{d_k R}(z_k)\subset B_{\tau_0 R}(x_0)$ and therefore $|\nabla
u_{d_k}(x)|\leq \alpha +\delta$ in $B_R$ for $k$ large. Passing to
the limit, we obtain $|\nabla u_0|\leq \alpha+\delta$ in $B_R$,
and since $\delta$ and $R$ were arbitrary, the claim holds.

Since $\nabla u_0$ is H\"older continuous in $B_{1/3}(\bar{z})$,
there holds that $\nabla u_0\neq0$ in a neighborhood of $\bar z$.
Thus, by the results in \cite{Li2}, $u_0\in W^{2,2}$ in a ball
$B_r(\bar z)$ for some $r>0$ and, since
$$
\int A(\nabla u_0)\nabla \varphi\,dx=0 \quad\mbox{for every
}\varphi\in C_0^\infty(B_r(\bar z)),
$$
taking $\varphi=\psi_{x_1}$ and integrating by parts we see that,
for $w=\frac{\partial u_0}{\partial x_1}$,
$$
\sum_{i,j=1}^N\int_{B_r(\bar z)} a_{ij}\big(\nabla
u_0(x)\big)w_{x_j}\psi_{x_i}\,dx=0.
$$
This is, $w$ is a solution to the uniformly elliptic equation
$$
\mathcal T w:= \sum_{i=1}^N\frac{\partial}{\partial
x_i}\Big(a_{ij}\big(\nabla u_0(x)\big)w_{x_j}\Big)=0.
$$

Let now $\bar w=\alpha-w$. Then, $\bar w\ge0$ in $B_r(\bar z)$,
$\bar w(\bar z)=0$ and $\mathcal T \bar w=0$ in $B_r(\bar z)$. By
Harnack inequality we conclude that $\bar w\equiv0$. Hence,
$w\equiv\alpha$ in $B_r(\bar z)$.

 Now, since we can repeat this argument around any point where $w=\alpha$,  by a
continuation argument, we have that $w=\alpha$ in $B_1(\bar{z})$.

Therefore, $\nabla u_0=\alpha\, e_1$ and we have, for some $y\in
\R^N$, $u_0(x)=\alpha (x_1-y_1)$ in $B_1(\bar{z})$. Since
$u_0(0)=0$, there holds that $y_1=0$ and $u_0(x)=\alpha x_1$ in
$B_1(\bar{z})$. Finally, since $\L u_0=0$ in $\{u_0>0\}$ by a
continuation argument we have that $u_0(x)=\alpha x_1$ in
$\{x_1\geq 0\}$.

On the other hand, as $u_0\geq 0$, $\L u_0=0$ in $\{u_0>0\}$ and
$u_0=0$ in $\{x_1=0\}$ we have, by Lemma \ref{development11}, that
$$u_0=-\gamma x_1+o(|x|) \quad \mbox{ in } \{x_1<0\}$$
for some $\gamma \geq 0$.

Now, define for $\lambda>0$, $(u_0)_{\lambda}(x)=\frac{1}{\lambda}
u_0(\lambda x)$. There exist a sequence $\lambda_n\to 0$ and
$u_{00}\in Lip(\R^N)$ such that $(u_0)_{\lambda_n}\to u_{00}$
uniformly in compact subsets of $\R^N$. We have $u_{00}(x)=\alpha
x_1^++\gamma x_1^-$.

By Lemma \ref{bolwepj} there exists a sequence $\ep'_j\to 0$ such
that $u^{\ep_j'}$ is a solution to $(P_{\ep'_j})$ and
$u^{\ep'_j}\to u_0$ uniformly on compact subsets of $\R^N$.
Applying a second time Lemma \ref{bolwepj} we find a sequence
$\ep''_j\to 0$ and a solution $u^{\ep''_j}$ to $(P_{\ep''_j})$
converging uniformly in compact subsets of $\R^N$ to $u_{00}$. Now
we can apply Proposition \ref{alphamenorphiM} in the case that
$\gamma=0$ or Proposition \ref{gammapositiva} in the case that
$\gamma>0$, and we conclude that $\alpha\leq \Phi^{-1}(M)$.
\end{proof}

\begin{proof}[Proof of Theorem \ref{teoasim}]
Assume that $x_0=0$, and $\nu=e_1$. Take
$u_{\lambda}(x)=\frac{1}{\lambda} u(\lambda x).$ Let $\rho>0$ such
that $B_{\rho}\subset\subset\Omega$, since $u_{\lambda}\in
Lip(B_{\rho/\lambda})$ uniformly in $\lambda$, $u_{\lambda}(0)=0$,
there exists $\lambda_j\to 0$ and $U\in Lip(\R^ N)$ such that
$u_{\lambda_j}\to U$ uniformly on compact subsets of $\R^N$.  From
Proposition \ref{nose} and Lemma \ref{bolwepj}, $\L u_{\lambda}=0$
in $\{u_{\lambda}>0\}$. Using the fact that $e_1$ is the
 inward normal in the measure theoretic sense, we have, for
fixed $k$,
$$|\{u_{\lambda}>0\}\cap\{x_1<0\}\cap B_k|\to 0 \quad \mbox{ as } \lambda \to 0.$$
Hence, $U=0$ in $\{x_1<0\}$. Moreover, $U$ is non negative in
$\{x_1>0\}$, $\L U=0$ in $\{U>0\}$ and $U$ vanishes in $\{x_1\leq
0\}$. Then, by Lemma \ref{development11} we have that, there
exists $\alpha\geq 0$ such that,
$$U(x)=\alpha x_1^++o(|x|).$$

By Lemma \ref{bolwepj} we can find a sequence $\ep'_j\to 0$ and
solutions $u^{\ep'_j}$ to $(P_{\ep'_j})$ such that $u_{\ep'_j}\to
U$ uniformly on compact subsets of $\R^N$ as $j\to \infty$. Define
$U_{\lambda}(x)=\frac{1}{\lambda} U(\lambda x)$, then
$U_{\lambda}\to \alpha x_1^+$ uniformly on compact subsets of
$\R^N$. Applying again Lemma \ref{bolwepj} we find a second
sequence $\sigma_j\to 0$ and $u^{\sigma_j}$ solution to
$(P_{\sigma_j})$ such that $u^{\sigma_j}\to \alpha x_1^+$
uniformly on compact subsets of $\R^N$ and,
$$\nabla u^{\sigma_j}\to \alpha \chi_{\{x_1>0\}} e_1\quad \mbox{ in } L^{g_0+1}_{loc}(\R^N).$$

Now we proceed as in the proof of Proposition
\ref{alphamenorphiM}. Let $\psi\in C_0^ {\infty}(\R^N)$ and choose
$u_{x_1}^{\sigma_j} \psi$ as test function in the weak formulation
of $\L u^{\sigma_j}=\beta_{\sigma_j}(u^{\sigma_j})$. Then,
$$B_{\sigma_j}(u^{\sigma_j})\to M \chi_{\{x_1>0\}}+\overline{M} \chi_{\{x_1<0\}}
 \quad *-\mbox{weakly in } L^{\infty}$$
with $\overline{M}=0$ or $\overline{M}=M$. Moreover
  $\Phi(\alpha)=M-\overline{M}$.

By the non degeneracy assumption on $u$ we have,
$$\frac{1}{r^N} \int_{B_r} u_{\lambda_j} \, dx \geq cr$$ and then,
$$\frac{1}{r^N} \int_{B_r} U_{\lambda_j} \, dx \geq cr.$$
Therefore $\alpha>0$. So that  we have that $\overline{M}=0$.
Then, $\alpha=\Phi^{-1}(M).$

We have shown that,
$$U(x)=\begin{cases} \Phi^{-1}(M) x_1+o(|x|) &\quad x_1>0\\
0 &\quad x_1\leq 0.
       \end{cases}
$$

By Theorem \ref{anterior}, $|\nabla U|\leq \Phi^{-1}(M)$ in
$\R^N$. As $U=0$ on $\{x_1=0\}$ we have, $U\leq \Phi^{-1}(M)x_1$
in $\{x_1>0\}$.

Since $\L U=0$ in $\{x_1>0\}$, $U=0$ on $\{x_1=0\}$, there holds
that $U\in C^{1,\alpha}(\{x_1\ge0\})$. Thus,
 $|\nabla U(0)|=\Phi^{-1}(M)>0$ so that, near zero, $U$ satisfies a linear
 uniformly elliptic equation in non divergence form
 and the same equation is satisfied by $w=U-\Phi^{-1}(M)x_1$ in $\{x_1>0\}\cap B_r(0)$ for some $r>0$.
 We also have $w\leq 0$ so that  by  Hopf's boundary principle we have that $w=0$ in $\{x_1>0\}\cap B_r(0)$
 and then, by a continuation argument based on the strong maximum principle we deduce that $U(x)=\alpha x_1^+$
 in $\R^N$. The proof is complete.
\end{proof}

\bigskip

Now we prove another result that is needed in order to see that
$u$ is a weak solution according to Definition \ref{weak2}.

\begin{teo}\label{limsup4}
Let $u^{\ep_j}$ be a solution to $(P_{\ep_j})$ in a domain
$\Omega\subset\mathbb{R}^N$ such that $u^{\ep_j}\rightarrow u$
uniformly in compact subsets of $\Omega$ and $\ep_j\to 0$. Let
$x_0\in  \Omega\cap\partial\{u>0\}$ and suppose that $u$ is
non-degenerate at $x_0$. Assume there is  a ball $B$ contained in
$ \{u=0\} $ touching  $ x_0$ then,
\begin{equation}\label{limsup224}
\limsup_{\stackrel{x\to x_0}{u(x)>0}}
\frac{u(x)}{\mbox{dist}(x,B)}= \Phi^{-1}(M).\end{equation}
\end{teo}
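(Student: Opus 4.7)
The plan is to establish the two inequalities separately, and in both cases the key input is Theorem \ref{anterior}.

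For the upper bound $\limsup \le \Phi^{-1}(M)$, I fix $\epsilon>0$ and use Theorem \ref{anterior} to choose $\rho>0$ with $|\nabla u|\le \Phi^{-1}(M)+\epsilon$ on $B_\rho(x_0)\cap\{u>0\}$. Since $\nabla u=0$ almost everywhere on the interior of $\{u=0\}$ and $u$ is continuous and nonnegative, this bound actually holds a.e.\ on $B_\rho(x_0)$, making $u$ globally $(\Phi^{-1}(M)+\epsilon)$-Lipschitz there. For $x$ close enough to $x_0$ with $u(x)>0$, the nearest point $y\in\overline B$ lies in $B_\rho(x_0)$ (because $x_0\in\overline B$ forces $|x-y|\le|x-x_0|$), so $u(x)=u(x)-u(y)\le (\Phi^{-1}(M)+\epsilon)\,\mbox{dist}(x,B)$. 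Letting $\epsilon\to 0$ yields the upper bound.

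For the lower bound, I translate and rotate so that $x_0=0$ and the outward normal to $B$ at $0$ is $e_1$ (so $B$ sits in $\{x_1\le 0\}$ and is tangent to $\{x_1=0\}$ at the origin). Perform the blow-up $u_\lambda(x)=u(\lambda x)/\lambda$; by the uniform Lipschitz bound and $u(0)=0$, a subsequence $\lambda_k\to 0$ produces a limit $U\in\mbox{Lip}(\R^N)$. The dilated balls $B/\lambda_k$ exhaust the half-space $\{x_1\le 0\}$, forcing $U\equiv 0$ on $\{x_1\le 0\}$. Proposition \ref{nose} (applied to the rescalings via Lemma \ref{bolwepj}) gives $\L U=0$ in $\{U>0\}$, so Lemma \ref{development11} yields the one-sided development $U(x)=\alpha\, x_1^{+}+o(|x|)$ for some $\alpha\ge 0$. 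The slope $\alpha$ is then pinned down by a second blow-up, exactly as in the proof of Theorem \ref{teoasim}: the scalings $U_\mu(x)=U(\mu x)/\mu$ converge to $\alpha\, x_1^{+}$, Lemma \ref{bolwepj} supplies solutions $u^{\sigma_j}$ of $(P_{\sigma_j})$ converging to $\alpha\, x_1^{+}$, and Proposition \ref{alphamenorphiM} forces $\alpha\in\{0,\Phi^{-1}(M)\}$. The non-degeneracy assumption on $u$ at $x_0$ transfers to $U$ as $r^{-N}\int_{B_r}U\,dx\ge c\,r$, which rules out $\alpha=0$; hence $\alpha=\Phi^{-1}(M)$.

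Finally, to exhibit a sequence along which the ratio attains the value $\Phi^{-1}(M)$, I set $y_n=(1/n)\,e_1$ and note that, from the development, $n\,U(y_n)=\Phi^{-1}(M)+o(1)$ and $U(y_n)>0$ for large $n$. For each such $n$, as $k\to\infty$ we have $u_{\lambda_k}(y_n)\to U(y_n)$ and $\mbox{dist}(\lambda_k y_n,B)/\lambda_k\to 1/n$ (because the dilated balls $B/\lambda_k$ converge on compacts to $\{x_1\le 0\}$). A diagonal choice $x_k=\lambda_{k(n)} y_n\to 0$ then satisfies $u(x_k)>0$ and $u(x_k)/\mbox{dist}(x_k,B)\to\Phi^{-1}(M)$, completing the lower bound. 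The main obstacle, as is typical for this circle of problems, is identifying the correct slope of the blow-up profile: this is precisely where the double blow-up, Proposition \ref{alphamenorphiM}, and the non-degeneracy hypothesis all become indispensable---without them one only obtains $\alpha\ge 0$, with no way to distinguish the admissible value $\Phi^{-1}(M)$.
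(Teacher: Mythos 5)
Your argument is correct, but it follows a route different from the paper's. You split \eqref{limsup224} into two inequalities. For the upper bound you combine Theorem \ref{anterior} with the observation that $\nabla u$ vanishes a.e.\ on $\{u=0\}$, so that $u$ is locally $(\Phi^{-1}(M)+\ep)$-Lipschitz near $x_0$ and hence $u(x)\le(\Phi^{-1}(M)+\ep)\,\mbox{dist}(x,B)$. For the lower bound you blow up $u$ at the \emph{fixed} point $x_0$, use the exterior ball to force the blow-up $U$ to vanish on the half-space $\{x_1\le 0\}$, invoke Lemma \ref{development11} for the one-sided development $U=\alpha x_1^+ +o(|x|)$, pin $\alpha$ via the double blow-up, Lemma \ref{bolwepj} and Proposition \ref{alphamenorphiM}, rule out $\alpha=0$ by nondegeneracy, and finally exhibit an explicit sequence $x_k\to x_0$ realizing the ratio $\Phi^{-1}(M)$. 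This is essentially a re-run of the proof of Theorem \ref{teoasim} with the ball playing the role of the measure-theoretic normal. The paper argues differently: it takes a sequence $y_k\to x_0$ realizing the limsup $\ell$, blows up at the footpoints $x_k\in\partial B$ at the scale $d_k=\mbox{dist}(y_k,B)$, so that the blow-up $u_0$ satisfies $0\le u_0\le\ell\langle x,e\rangle^+$ with equality at the interior point $e$; the strong maximum principle then forces $u_0\equiv\ell\langle x,e\rangle^+$, and Proposition \ref{alphamenorphiM} together with $\ell>0$ (from nondegeneracy) yields $\ell=\Phi^{-1}(M)$ in one stroke. Your version is more modular and makes the role of each ingredient (gradient bound, development, nondegeneracy) explicit, at the cost of re-deriving the asymptotic development; the paper's single blow-up with a Hopf/maximum-principle touching argument is shorter and avoids any appeal to Theorem \ref{anterior} or Lemma \ref{development11}.
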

\begin{proof}
 Let $\ell$ be the finite limit on the left hand side of \eqref{limsup224}, and $y_k \to x_0$
with $u(y_k)>0$ and
$$\frac{u(y_k)}{d_k}\to \ell, \quad d_k=\mbox{dist}(y_k,B).$$
Consider the blow up sequence $u_k$ with respect to
$B_{d_k}(x_k)$, where $x_k\in\partial B$ are points with
$|x_k-y_k|=d_k$, and choose a subsequence with blow up limit
$u_0$, such that there exists
$$e:=\lim_{k\to\infty} \frac{y_k-x_k}{d_k}.$$

Then, by construction, $u_0(e)=\ell=\ell\langle e,e\rangle$,
$u_0(x)\leq \ell\langle x, e\rangle $ for $\langle x ,
e\rangle\geq 0$, $u_0(x)=0$ for $\langle x, e\rangle\leq 0.$ In
particular, $\nabla u_0(e)=\ell\,e$.

By the non-degeneracy assumption, we have that $\ell>0$. Since
$|\nabla u_0(e)|=\ell>0$ and $\nabla u_0$ is continuous, both
$u_0$ and $\ell\langle x, e\rangle^+$ are solutions of $Lv=0$ in
$\{u_0>0\}\cap \{\langle x,e\rangle\ge 0\}\cap\{|\nabla u_0|>0\}$
where
$$
Lv:=\sum_{i,j=1}^N b_{ij}(\nabla u_0)v_{x_1x_j}
$$
is uniformly elliptic and
$$b_{ij}(p)=\delta_{ij}+\Big(\frac{g'(|p|)|p|}{g(|p|)}-1\Big)
\frac{p_i p_j}{|p|^2}.$$

Now, from the strong maximum principle, we have that  they must
coincide in a neighborhood at the point $e$.

By continuation we have that $u_0=\ell\langle x, e\rangle^+$.
Thus, we have by, Proposition \ref{alphamenorphiM}, that
$\ell=\Phi^{-1}(M)$.

\end{proof}

\section{Regularity of the free boundary}
We can now prove a regularity result for the free boundary of
limits of solutions to \eqref{ecsing},
\begin{teo}\label{regularidadsing} Assume that $g$ satisfies
conditions \eqref{cond} and  \eqref{condderivadagintro2}. Let
$u^{\ep_j}$ be a solution to $(P_{\ep_j})$ in a domain
$\Omega\subset\mathbb{R}^N$ such that $u^{\ep_j}\rightarrow u$
uniformly in compact subsets of $\Omega$ and $\ep_j\to 0$.  Let
$x_0\in \Omega\cap\partial\{u>0\}$ be such that there is an inward
unit normal $\nu$ in the measure theoretic sense at $x_0$. Suppose
that $u$ is uniformly non-degenerate at the free boundary in a
neighborhood of $x_0$ (see Definition \ref{nondegener}). Then,
there exists $r>0$ such that $B_r(x_0)\cap\partial\{u>0\}$ is a
$C^{1,\alpha}$ surface.
\end{teo}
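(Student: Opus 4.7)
The plan is to reduce the theorem to the regularity result for weak solutions established in \cite{MW1}. More precisely, I would show that under the stated hypotheses, the limit $u$ is a weak solution of the free boundary problem \eqref{bernoulli2} in the sense of Definition \ref{weak2} with constant $\lambda^{*}=\Phi^{-1}(M)$, in a neighborhood of $x_0$; once this is done, the theorem of \cite{MW1} applies and yields that $\partial_{\mathrm{red}}\{u>0\}$ is a $C^{1,\alpha}$ surface near $x_0$. Since the measure-theoretic normal $\nu$ exists at $x_0$, we have $x_0 \in \partial_{\mathrm{red}}\{u>0\}$, so this gives exactly the claim.

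The main task is therefore to verify the four items in Definition \ref{weak2} in a neighborhood $D$ of $x_0$. Item (1), namely $\L u=0$ in $\{u>0\}\cap D$, follows directly from Proposition \ref{nose}(2). For item (2), the lower bound follows from the uniform nondegeneracy assumption (take $\gamma=1$), while the upper bound follows from the uniform Lipschitz estimate $\|\nabla u^{\ep}\|_{\infty} \le C$ of Corollary \ref{gradsing}, which passes to the limit to give $u(x)\le C\,\mathrm{dist}(x,\partial\{u>0\})$ for $x\in D$, hence the desired averaged upper bound $\frac{1}{r}(\,\rlap{$-$}\!\!\int_{B_r(x)}u^{\gamma}\,dx)^{1/\gamma}\le C_{\max}$ for $x\in \partial\{u>0\}\cap D$.

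Item (3), the asymptotic development at $\mathcal{H}^{N-1}$-a.e.\ point of the reduced free boundary, is given by Theorem \ref{teoasim} with $\lambda^{*}=\Phi^{-1}(M)$: the measure-theoretic normal exists $\mathcal{H}^{N-1}$-a.e. on $\partial_{\mathrm{red}}\{u>0\}$ by definition, and the uniform nondegeneracy supplies the nondegeneracy assumption required by Theorem \ref{teoasim} at every point of the free boundary in a neighborhood. Item (4) splits into two assertions: the estimate $\limsup |\nabla u(x)|\le \lambda^{*}$ is precisely Theorem \ref{anterior}, and the touching-ball lower bound is exactly the content of Theorem \ref{limsup4}, whose hypothesis of pointwise nondegeneracy is supplied by our uniform nondegeneracy assumption.

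The only mildly delicate point is to make sure all of these ingredients are available in a single open neighborhood $D$ of $x_0$. This is automatic for items (1) and (4), and for item (3) we only need the conclusion $\mathcal{H}^{N-1}$-a.e.\ on $\partial_{\mathrm{red}}\{u>0\}\cap D$, which is what Theorem \ref{teoasim} yields pointwise at each such point. For item (2) it is the uniform nondegeneracy which guarantees that $c_{\min}$ and $r_0$ can be chosen uniformly on compact subsets of $D$. Having verified (1)--(4) in $D$, $u$ is a weak solution of \eqref{bernoulli2} in the sense of Definition \ref{weak2} on $D$, and the $C^{1,\alpha}$ regularity of $\partial_{\mathrm{red}}\{u>0\}\cap B_r(x_0)$ follows by invoking the corresponding theorem from \cite{MW1}; the main obstacle was really done in the previous sections, namely proving the sharp asymptotic slope $\Phi^{-1}(M)$ (Theorem \ref{teoasim}) and the touching-ball version (Theorem \ref{limsup4}) for limits of the singular perturbation problem rather than for arbitrary weak solutions.
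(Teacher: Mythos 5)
Your proposal is correct and takes essentially the same route as the paper: the paper's proof is a one-line invocation of Corollary \ref{gradsing}, Theorem \ref{teoasim}, Theorem \ref{limsup4} and the nondegeneracy hypothesis to confirm that $u$ is a weak solution in the sense of Definition \ref{weak2}, followed by an appeal to Theorem 9.4 of \cite{MW1}. You simply unpack this by matching each of the four items of Definition \ref{weak2} to the corresponding ingredient (also making explicit the role of Theorem \ref{anterior} for the gradient bound in item (4)), which is the same argument written out in full.
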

\begin{proof}
By Corollary \ref{gradsing}, Theorem \ref{teoasim}, Theorem
\ref{limsup4} and the nondegeneracy assumption we have that $u$ is
a weak solution in the sense of Definition \ref{weak2}. Therefore
Theorem 9.4 of \cite{MW1} applies, and the result follows.
\end{proof}

\section{Some examples}
In this section we  give some examples in which the nondegeneracy
condition  is satisfied. So that, in these cases
$\partial_{red}\{u>0\}$ is a $C^{1,\alpha}$ surface.

For the case of a limit of minimizers of the functionals
\begin{equation}\label{funbep}
J_{\ep}(v)=\int_\Omega G(|\nabla v|)\,dx+\int_\Omega B_\ep(v)\,dx
\end{equation}
with $B_{\ep}'(s)=\beta_{\ep}(s)$, we wil also prove that
$\H(\partial\{u>0\}\setminus\partial_{red}\{u>0\})=0$.

The uniform non degeneracy condition will follow from the linear
growth out of the free boundary. This is a well known result for
the case of the laplacian. We prove it here for the operator $\L$
(Theorem \ref{nodegfinal}). The proof is based on an iteration
argument that, in the case of the proof for the laplacian, makes
use of the mean value property (see \cite{CS}). We replace it here
by a blow up argument (see Lemma \ref{itera1}).

\bigskip

%
%
%

\begin{lema}\label{itera1}
Let $c_1>1$ and let  $u^{\ep}\in C(\Omega)$, $|\nabla u^{\ep}|\leq
L$ with $\L u^{\ep}=0$ in $\{u^{\ep}>\ep\}$ be such that there
exists $C>0$ so that
 $u^{\ep}(x)\geq C\, dist(x,\partial \{u^{\ep}>\ep\})$ if $\uep(x)>c_1\ep$ and
$d(x)=dist(x,\partial \{u^{\ep}>\ep\})<1/2\, dist(x,\partial
\Omega)$.  Then, there exists $\delta_0>0$,
$\delta_0=\delta_0(c_1,C)$ such that $\forall \ep>0$ and $\forall
x\in \{u^{\ep}>c_1\ep\}$ with $d(x)<1/2\,dist(x,\partial\Omega)$
we have
$$
\sup_{B_{d(x)}(x)} u^{\ep} \geq (1+\delta_0) u^{\ep}(x).
$$
\end{lema}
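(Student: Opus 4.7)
The plan is a blow-up argument by contradiction. Suppose the conclusion fails; then there exist $\ep_k>0$ and $x_k\in\{u^{\ep_k}>c_1\ep_k\}$ with $d_k:=d(x_k)<\frac12\operatorname{dist}(x_k,\partial\Omega)$ satisfying
$$\sup_{B_{d_k}(x_k)}u^{\ep_k}<\Big(1+\tfrac1k\Big)u^{\ep_k}(x_k).$$
Set
$$v_k(y):=\frac{u^{\ep_k}(x_k+d_k\,y)}{u^{\ep_k}(x_k)},$$
which is defined at least on $B_2$ thanks to $d_k<\frac12\operatorname{dist}(x_k,\partial\Omega)$. The gradient bound $|\nabla u^{\ep_k}|\le L$ combined with the linear growth hypothesis $u^{\ep_k}(x_k)\ge C d_k$ gives $|\nabla v_k|\le L/C$ on $B_2$, so the $v_k$ are equi-Lipschitz with $v_k(0)=1$, $v_k\ge0$, and $v_k\le1+1/k$ on $\overline{B_1}$. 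By Arzel\`a--Ascoli, along a subsequence $v_k\to v_\infty$ uniformly on $\overline{B_1}$, with $v_\infty$ Lipschitz, $v_\infty(0)=1$, and $v_\infty\le1$ on $\overline{B_1}$.

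Pick $z_k\in\partial\{u^{\ep_k}>\ep_k\}$ with $|z_k-x_k|=d_k$; by continuity, $u^{\ep_k}(z_k)=\ep_k$, so $\zeta_k:=(z_k-x_k)/d_k\in\partial B_1$ satisfies
$$v_k(\zeta_k)=\frac{\ep_k}{u^{\ep_k}(x_k)}<\frac1{c_1}.$$
After a further extraction, $\zeta_k\to\zeta_\infty\in\partial B_1$ with $v_\infty(\zeta_\infty)\le1/c_1<1$. The proof thus reduces to showing $v_\infty\equiv1$ on $\overline{B_1}$, which contradicts this inequality.

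To prove $v_\infty\equiv1$, note that $v_k$ satisfies the rescaled equation $\operatorname{div}(\mathcal A_k(\nabla v_k))=0$ on the open set $\{v_k>\ep_k/u^{\ep_k}(x_k)\}$, where $\mathcal A_k(p)=g_k(|p|)\,p/|p|$ and $g_k(t):=g(\lambda_k t)$ with $\lambda_k:=u^{\ep_k}(x_k)/d_k\ge C$. The identity
$$\frac{t g_k'(t)}{g_k(t)}=\frac{(\lambda_k t)g'(\lambda_k t)}{g(\lambda_k t)}\in[\delta,g_0]$$
shows each $g_k$ satisfies \eqref{cond} with the same constants $\delta,g_0$, so Lieberman's Harnack inequality applies to $\L_k$-solutions with a uniform constant $K$. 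Since $\mathcal A_k$ is odd, the nonnegative function $w_k:=(1+1/k)-v_k$ solves the same equation on the same set. Starting at $y_0=0$, for any $r<(1-1/c_1)C/L$ the equi-Lipschitz bound forces $v_k>1/c_1>\ep_k/u^{\ep_k}(x_k)$ on $B_r(0)$, so Harnack yields
$$\sup_{B_{r/2}(0)}w_k\le K\,w_k(0)=K/k\longrightarrow0,$$
whence $v_\infty\equiv1$ on $B_{r/2}(0)$.

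Iterating this step at points $y_j$ where $v_\infty(y_j)=1$, and allowing the Harnack radius to shrink as $y_j$ approaches $\partial B_1$, propagates the identity $v_\infty\equiv1$ throughout $B_1$, and then by continuity of $v_\infty$ on $\overline{B_1}$ to $\zeta_\infty$. The main obstacle is precisely this final propagation: one must check that the chain of Harnack balls reaches points arbitrarily close to $\partial B_1$. This works because the Harnack constant $K$ and the equi-Lipschitz constant $L/C$ are both independent of the radius; one may therefore pick $r_j$ proportional to $\operatorname{dist}(y_j,\partial B_1)$ near the boundary and still invoke Harnack, which together with a standard diagonal extraction and the uniform continuity of $v_\infty$ on $\overline{B_1}$ yields $v_\infty\equiv1$ on $\overline{B_1}$ and completes the contradiction.
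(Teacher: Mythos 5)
Your blow-up/Harnack strategy is the one the paper uses, but you overlook a key simplification that the paper exploits, and this leads you into an unnecessary and only sketchily justified propagation argument.

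The point you miss is that the rescaled equation holds on \emph{all} of $B_1$, not just on the small ball $B_r(0)$ with $r<(1-1/c_1)C/L$ that you extract from the Lipschitz bound. Indeed, by the very definition $d_k=\operatorname{dist}(x_k,\partial\{u^{\ep_k}>\ep_k\})$, and since $x_k$ lies in the open set $\{u^{\ep_k}>\ep_k\}$ (because $u^{\ep_k}(x_k)>c_1\ep_k>\ep_k$), the connected ball $B_{d_k}(x_k)$ cannot leave $\{u^{\ep_k}>\ep_k\}$ without meeting its boundary, hence $B_{d_k}(x_k)\subset\{u^{\ep_k}>\ep_k\}$. After rescaling, $\L_k v_k=0$ on all of $B_1$, and the same holds for $(1+1/k)-v_k\ge0$ by oddness of the operator. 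The paper then simply applies the (scale-invariant, uniform-in-$k$) Harnack inequality on $B_r$ for each fixed $r<1$, getting $\sup_{B_r}\big((1+\delta_k)-w_k\big)\le c(r)\,\delta_k\to0$, so $\overline w\equiv1$ on $B_r$ for every $r<1$, hence on $B_1$, hence on $\overline{B_1}$ by uniform continuity --- no chain of Harnack balls and no continuation argument is needed.

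As written, your final propagation step is the weak link. You must (i) verify at each new center $y_j$ that $v_k>\ep_k/u^{\ep_k}(x_k)$ on a neighborhood for $k$ large (you only check this at $y_0=0$, where it is easiest because $v_k(0)=1$ exactly), and (ii) justify that a chain of Harnack balls of radius $r_j\sim\operatorname{dist}(y_j,\partial B_1)$ actually exhausts $B_1$ and, combined with a diagonal extraction and continuity, reaches $\zeta_\infty$. These details are plausible but left vague, and they are entirely avoidable: once you note that $B_{d_k}(x_k)\subset\{u^{\ep_k}>\ep_k\}$, the proof collapses to the paper's three-line Harnack argument.
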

\begin{proof}
Suppose by contradiction that there exist  sequences $\delta_k\to
0$,  $\ep_k>0$ and $x_k\in \{u^{\ep_k}>c_1 \ep_k\}$ with
$d_k=d(x_k)<1/2\,dist(x_k,\partial\Omega)$ such that
$$
\sup _{ B_{d_k}(x_k)} u^{\ep_k}\leq (1+\delta_k) u^{\ep_k}(x_k).$$

Take $w_k(x)=\displaystyle\frac{u^{\ep_k}(x_k+d_k
x)}{u^{\ep_k}(x_k)}$. Then, $w_k(0)=1$ and
$$
\max_{\overline{B_1}} w_k\leq (1+\delta_k), \quad w_k>0,\quad
\mbox{ and }\quad\mathcal{L}_k w_k=0\quad\mbox{in }B_1,
$$
where $\mathcal{L}_k v=
\mbox{div}\Big(\displaystyle\frac{g_k(|\nabla v|)}{|\nabla v|}
\nabla v \Big)$ with
$g_k(t)=g\Big(\displaystyle\frac{u^{\ep_k}(x_k) t}{d_k}\Big)$.

On the other hand, in $B_2$ we have
$$\|\nabla w_k\|_{L^{\infty}(B_2)}=\|\nabla u^{\ep_k}(x_k+d_k
x)\|_{L^{\infty}(B_2)}\frac{d_k}{u^{\ep_k}(x_k)}\leq
\frac{L}{C}.$$

Then, there exists $\overline{w}\in C(\overline{B}_1)$ such that
$$w_k \to \overline{w}\quad \mbox{uniformly in } \overline{B}_1.$$

Take $0<r<1$ ant let $v_k(x)=(1+\delta_k)-w_k(x)$. Then, since
$g_k$ satisfies \eqref{cond}, by Harnack inequality we have
$$0\leq v_k(x)\leq c(r) v_k(0)\quad \mbox{ for } |x|<r.$$

By passing to the limit we have
$$0\leq 1-\overline{w}\leq c(r) (1-\overline{w}(0))=0.$$

Therefore $\overline{w}=1$ in $B_1$.

Let $y_k\in \partial \{u_k>\ep_k\}$ with $|x_k-y_k|=d_k$. Then,
 if
$z_k=\frac{y_k-x_k}{d_k}$ we have,
$$w_k(z_k)=\frac{\ep_k}{u^{\ep_k}(x_k)}\leq \frac{1}{c_1}$$ and, we may assume that
$z_k\to \bar{z} \in \partial B_1$. Thus,
$1=\overline{w}(\bar{z})\leq \frac{1}{c_1}<1$. This is a
contradiction, and the lemma is proved.
\end{proof}

\begin{teo}\label{nodegfinal} 
Let $c_1>1$, $C,\, L>0$ and $\Omega'\subset\subset\Omega$. There
exist $c_0,\, r_0>0$ such that, if $\uep\in C(\Omega)$ is such
that $\L\uep=0$ in $\{\uep>\ep\}$, $\|\uep\|_{L^\infty(\Omega')},\
\|\nabla\uep\|_{L^\infty(\Omega')}\le L$ and
 $u^{\ep}(x)\geq C\, dist(x,\partial \{u^{\ep}>\ep\})$ if $x\in \{\uep>c_1\ep\}\cap\Omega'$ and
$d(x)=dist(x,\partial \{u^{\ep}>\ep\})<1/2\, dist(x,\partial
\Omega')$ then, if $x_0\in \Omega'\cap \{u^{\ep}>c_1\ep\}$ with
$dist(x_0,\partial \{u^{\ep}>\ep\})< 1/2\, dist(x,\partial
\Omega')$ there holds that,
$$
\sup_{B_r(x_0)} u^{\ep} \geq c_0 r \quad \mbox{ for } 0<r<r_0.
$$
\end{teo}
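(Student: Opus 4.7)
I would prove the estimate by iterating Lemma~\ref{itera1}. Starting from $y_0=x_0$, I construct inductively points $y_{k+1}\in\overline{B_{d(y_k)}(y_k)}$ with $u^\ep(y_{k+1})\ge(1+\delta_0)u^\ep(y_k)$ as long as Lemma~\ref{itera1} can be applied at $y_k$. Iterating yields the geometric lower bound $u^\ep(y_k)\ge(1+\delta_0)^k u^\ep(x_0)$, and since $u^\ep(y_k)\ge u^\ep(x_0)>c_1\ep$ at every step, each $y_k$ lies in $\{u^\ep>c_1\ep\}$. The linear-growth hypothesis then gives $d(y_j)\le u^\ep(y_j)/C$, and summing the resulting geometric series yields the telescoped displacement bound
$$
|y_k-x_0|\ \le\ \sum_{j=0}^{k-1}d(y_j)\ \le\ \frac{1}{C}\sum_{j=0}^{k-1}u^\ep(y_j)\ \le\ \frac{1+\delta_0}{\delta_0\,C}\,u^\ep(y_{k-1}).
$$

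The iteration cannot continue indefinitely since $u^\ep\le L$ on $\Omega'$. Let $k^{*}$ be the largest index for which Lemma~\ref{itera1} applies at $y_{k^{*}}$ and $y_{k^{*}}\in B_r(x_0)$; the iteration breaks either because $y_{k^{*}+1}\notin B_r(x_0)$ or because the remaining hypothesis $d(y_{k^{*}+1})<\tfrac12\mbox{dist}(y_{k^{*}+1},\partial\Omega')$ fails. In the first case, the displacement bound gives $r<|y_{k^{*}+1}-x_0|\le \frac{1+\delta_0}{\delta_0 C}\,u^\ep(y_{k^{*}})$, hence $u^\ep(y_{k^{*}})>\frac{\delta_0 C}{1+\delta_0}\,r$; in the second, the failure of the separation yields $u^\ep(y_{k^{*}+1})\ge C\,d(y_{k^{*}+1})\ge\tfrac{C}{2}(\mbox{dist}(x_0,\partial\Omega')-r)$, which is comparable to $r$ once $r_0$ is chosen as a small fraction of $\mbox{dist}(x_0,\partial\Omega')$. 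In either case, using that $y_{k^{*}}$ (resp.\ $y_{k^{*}+1}$) lies in $B_r(x_0)$, one concludes $\sup_{B_r(x_0)}u^\ep\ge c_0 r$, with $c_0$ depending only on $\delta_0$ and $C$, and $r_0$ chosen appropriately small.

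\textbf{Main obstacle.} The key step is the telescoping estimate, which replaces a naive bound of the form $|y_k-x_0|\le k\cdot d_{\max}$ by one proportional only to the \emph{last} value $u^\ep(y_{k-1})$; it is precisely this fact that makes the iteration quantitative and hinges on the geometric (rather than merely monotone) growth furnished by Lemma~\ref{itera1}. A second point requiring care is ensuring that the separation hypothesis $d(y_k)<\tfrac12\mbox{dist}(y_k,\partial\Omega')$ is preserved along the iteration: this forces $r_0$ to be chosen small relative to $\mbox{dist}(x_0,\partial\Omega')$ so that both the modest drift of $\mbox{dist}(\cdot,\partial\Omega')$ along $\{y_k\}$ and the bound $d(y_k)\le u^\ep(y_k)/C\le M/C$ on $B_r(x_0)$ combine to keep the margin intact through the (finitely many) iterations.
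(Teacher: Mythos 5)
Your proposal follows the same route the paper takes: the paper's proof is exactly the Caffarelli--Salsa Theorem 1.9 iteration built on Lemma \ref{itera1}, and your construction of the points $y_k$, the geometric growth $u^\ep(y_k)\ge(1+\delta_0)^k u^\ep(x_0)$ (which also keeps $y_k\in\{u^\ep>c_1\ep\}$), and the telescoped displacement bound $|y_k-x_0|\le\frac{1+\delta_0}{\delta_0 C}u^\ep(y_{k-1})$ are precisely the intended argument; the first exit case (the sequence leaves $B_r(x_0)$) is handled correctly and already yields $c_0=\frac{\delta_0 C}{1+\delta_0}$.

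There is, however, a concrete flaw in your second exit case. When the iteration stops because the separation condition $d(y_{k^*+1})<\tfrac12\,\mbox{dist}(y_{k^*+1},\partial\Omega')$ fails, you invoke $u^\ep(y_{k^*+1})\ge C\,d(y_{k^*+1})$; but the linear-growth hypothesis is only assumed at points where that separation condition \emph{holds}, so you are using the hypothesis exactly where it is unavailable -- the step is circular as written. The repair is to use the hypothesis one step earlier, at $y_{k^*}$, where it does hold: since $|y_{k^*+1}-y_{k^*}|\le d(y_{k^*})$ one has $d(y_{k^*+1})\le 2d(y_{k^*})\le \frac{2}{C}u^\ep(y_{k^*})$, hence $u^\ep(y_{k^*})\ge\frac C4 d(y_{k^*+1})\ge\frac C8\big(\mbox{dist}(x_0,\partial\Omega')-r\big)$, and $y_{k^*}\in B_r(x_0)$. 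Note also that this case, as in your sketch, forces $r$ to be small compared with $\mbox{dist}(x_0,\partial\Omega')$, whereas the theorem asserts $r_0$ uniform in $x_0$; this imprecision is already present in the paper's statement (in the application, Corollary \ref{coro-nondeg}, the linear growth is assumed on every compact subset of $\Omega$, so one works on a slightly larger $\Omega''\supset\supset\Omega'$ and the needed margin is automatic, with $r_0$ depending only on $\Omega'$, $\Omega''$). Apart from these two points -- the misapplied hypothesis in case 2 and the $r_0$ versus $\mbox{dist}(x_0,\partial\Omega')$ dependence -- your argument is the paper's.
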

\begin{proof}
The proof follows as that of Theorem 1.9 in \cite{CS} by using
Lemma \ref{itera1} and the same iteration argument as in that
theorem.
\end{proof}

As a Corollary we get the locally uniform nondegeneracy of
$u=\lim\uep$ if $\uep$ are solutions to $(P_\ep)$ with linear
growth. In fact,
\begin{corol}\label{coro-nondeg}
Let $u^{\ep_j}$ be uniformly bounded solutions to $(P_{\ep_j})$ in
$\Omega$ such that for every $\Omega'\subset\subset\Omega$ there
exist constants $c_1>1$, $C>0$ such that $u^{\ep_j}(x)\geq C\,
dist(x,\partial \{u^{\ep_j}>\ep_j\})$ if $x\in
\{u^{\ep_j}>c_1\ep_j\}\cap\Omega'$ and $d(x)=dist(x,\partial
\{u^{\ep_j}>\ep_j\})<1/2\, dist(x,\partial \Omega')$. Assume
$\uepj\to u$ uniformly on compact subsets of $\Omega$.

Then, there exist constants $c_0,\, r_0$ depending on $c_1,\,C$,
the uniform bound of $\|u^{\ep_j}\|_{L^\infty (\Omega)}$ and
$\Omega'$ such that for every
$x_0\in\Omega'\cap\overline{\{u>0\}}$ such that
$dist(x_0,\partial\{u>0\}) <1/2 dist(x_0,\partial\Omega')$,
$$
\sup_{B_r(x_0)} u \geq c_0 r \quad \mbox{ for } 0<r<r_0.
$$
\end{corol}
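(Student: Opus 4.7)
The plan is to transfer the nondegeneracy estimate of Theorem \ref{nodegfinal} from the approximants $u^{\ep_j}$ to the limit $u$ using the uniform convergence $u^{\ep_j}\to u$ on compact subsets of $\Omega$.

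First I would fix an intermediate set $\Omega''$ with $\Omega'\subset\subset\Omega''\subset\subset\Omega$. Since $\{u^{\ep_j}\}$ is uniformly bounded in $L^\infty(\Omega)$, Corollary \ref{gradsing} furnishes a uniform Lipschitz bound $\|\nabla u^{\ep_j}\|_{L^\infty(\Omega'')}\le L$ once $\ep_j$ is sufficiently small. Together with the linear-growth hypothesis assumed on $u^{\ep_j}$, this places each $u^{\ep_j}$ within the scope of Theorem \ref{nodegfinal} applied on $\Omega''$, producing constants $c_0,\,r_0>0$ that do not depend on $j$ and that have the dependence claimed in the corollary.

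Next I would treat the case $x_0\in\{u>0\}\cap\Omega'$ with $\mathrm{dist}(x_0,\partial\{u>0\})<\tfrac{1}{2}\mathrm{dist}(x_0,\partial\Omega')$. Since $u^{\ep_j}(x_0)\to u(x_0)>0$ and $\ep_j\to 0$, for $j$ large $x_0\in\{u^{\ep_j}>c_1\ep_j\}$. Picking $z\in\partial\{u>0\}$ with $|x_0-z|=\mathrm{dist}(x_0,\partial\{u>0\})$, the relations $u(z)=0$ and $u^{\ep_j}(z)\to 0$ force $u^{\ep_j}(z)<\ep_j$ eventually, so
\[
\mathrm{dist}\bigl(x_0,\partial\{u^{\ep_j}>\ep_j\}\bigr)\le|x_0-z|<\tfrac{1}{2}\mathrm{dist}(x_0,\partial\Omega''),
\]
provided $\Omega''$ has been chosen close enough to $\Omega'$. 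Theorem \ref{nodegfinal} then gives $\sup_{B_r(x_0)}u^{\ep_j}\ge c_0 r$ for $0<r<r_0$, and the same bound for $u$ is obtained by passing to the limit in $j$ using uniform convergence.

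For the boundary case $x_0\in\partial\{u>0\}\cap\Omega'$, I would approximate: choose $y_k\in\{u>0\}\cap\Omega'$ with $y_k\to x_0$ (such points exist since $x_0\in\overline{\{u>0\}}$). For $k$ large each $y_k$ falls into the previous case, so $\sup_{B_r(y_k)}u\ge c_0 r$ for $0<r<r_0$. Given $r\in(0,r_0)$ and $\eta\in(0,1)$, for $k$ large $|y_k-x_0|<\eta r$, whence $B_{(1-\eta)r}(y_k)\subset B_r(x_0)$ and $\sup_{B_r(x_0)}u\ge c_0(1-\eta)r$; letting $\eta\to 0$ yields the desired bound. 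The main technical obstacle I expect is the careful bookkeeping of the distance condition: one must ensure that $\mathrm{dist}(x_0,\partial\{u^{\ep_j}>\ep_j\})<\tfrac{1}{2}\mathrm{dist}(x_0,\partial\Omega'')$ holds uniformly in $j$ along the approximating sequence, using only continuity of $u$, the vanishing of $u$ on $\partial\{u>0\}$, and the uniform convergence $u^{\ep_j}\to u$, so that the final constants $c_0,r_0$ retain the stated dependence on $c_1, C, \|u^{\ep_j}\|_{L^\infty(\Omega)}$ and $\Omega'$.
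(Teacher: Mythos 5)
Your overall plan coincides with the paper's intended route: the paper proves this corollary by invoking Theorem \ref{nodegfinal} and the limiting argument of Chapter 1 of \cite{CS}, i.e.\ one proves the estimate at points of $\{u>0\}$ satisfying the distance condition by applying the $\ep$-level nondegeneracy to $u^{\ep_j}$ for $j$ large and passing to the limit, and then reaches points of $\partial\{u>0\}$ by continuity. Your treatment of the threshold $u^{\ep_j}(x_0)>c_1\ep_j$, the passage to the limit in $\sup_{B_r(x_0)}u^{\ep_j}\ge c_0 r$, and the approximation of boundary points are all fine. (Also, the precaution of taking $\Omega''$ close to $\Omega'$ is unnecessary: since $x_0\in\Omega'\subset\Omega''$ one has $\mathrm{dist}(x_0,\partial\Omega'')\ge \mathrm{dist}(x_0,\partial\Omega')$, so enlarging $\Omega''$ only makes the required inequality easier.)

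There is, however, a genuine gap at the one step that actually needs an argument: the verification of the distance hypothesis of Theorem \ref{nodegfinal}. You claim that $u(z)=0$ and $u^{\ep_j}(z)\to 0$ ``force $u^{\ep_j}(z)<\ep_j$ eventually''. This does not follow: both quantities tend to zero, but there is no comparison of rates (nothing rules out, a priori, $u^{\ep_j}(z)\sim \ep_j^{1/2}$), so you cannot conclude $z\in\{u^{\ep_j}\le\ep_j\}$ nor, therefore, that $\mathrm{dist}(x_0,\partial\{u^{\ep_j}>\ep_j\})\le|x_0-z|$. Since this is precisely the hypothesis under which Theorem \ref{nodegfinal} (and the assumed linear growth) is available, the proof is incomplete at its crux. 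The standard repair is a compactness/Harnack argument rather than a pointwise one: suppose that along a subsequence $\mathrm{dist}(x_0,\partial\{u^{\ep_j}>\ep_j\})\ge\rho:=\tfrac12\,\mathrm{dist}(x_0,\partial\Omega'')$. Since $u^{\ep_j}(x_0)>\ep_j$ for $j$ large and $B_\rho(x_0)$ does not meet $\partial\{u^{\ep_j}>\ep_j\}$, connectedness gives $B_\rho(x_0)\subset\{u^{\ep_j}>\ep_j\}$, hence $\L u^{\ep_j}=0$ in $B_\rho(x_0)$. Using the uniform Lipschitz bound of Corollary \ref{gradsing} and Lieberman's interior $C^{1,\alpha}$ estimates (as in the proof of Proposition \ref{nose}), the limit satisfies $\L u=0$ in $B_\rho(x_0)$ with $u\ge0$ and $u(x_0)>0$; the Harnack inequality, chained over $B_\rho(x_0)$, then gives $u>0$ in $B_\rho(x_0)$, i.e.\ $\mathrm{dist}(x_0,\partial\{u>0\})\ge\rho\ge\tfrac12\,\mathrm{dist}(x_0,\partial\Omega')$, contradicting the standing assumption on $x_0$. (Equivalently, one can center the Harnack argument at the nearest free boundary point $z$ and contradict $z\in\partial\{u>0\}$.) With this replacement for the flawed sentence, your argument is complete and agrees with the paper's proof.
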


\begin{proof}
The proof follows from Theorem \ref{nodegfinal} as in Chapter 1 in
\cite{CS}.
\end{proof}

%

\bigskip

\subsection{Example 1}

Before we  give the first example we need the following,

\begin{defi}\label{minimal}
Let $u^{\ep}$ be a solution to $(P_{\ep})$. We say that $u^{\ep}$
is a minimal solution to $(P_{\ep})$ in $\Omega$ if whenever we
have $h^{\ep}$ a strong supersolution to $(P_{\ep})$ in
$\Omega'\subset\subset\Omega$, i.e.,
$$ h^{\ep}\in W^{1,G}(\Omega)\cap C(\overline{\Omega'}),\quad g(|\nabla h^\ep|)\frac{\nabla h^\ep}{|\nabla h^\ep|}
\in W^{1,1}(\Omega'), \quad \L h^{\ep} \leq \beta_{\ep}(h^{\ep})
\mbox{ in } \Omega',$$ which satisfies
$$h^{\ep}\geq u^{\ep} \mbox{ on } \partial \Omega,$$ then,
$$h^{\ep}\geq u^{\ep} \mbox{ in } \Omega'.$$
\end{defi}

We can prove for minimal solutions, as in Theorem 4.1 in
\cite{BCN}, the following

\begin{lema}
Let  $u^{\ep}$ be minimal solutions to $(P_{\ep})$ in a domain
$\Omega\subset \R^{N}$. For every $\Omega'\subset\subset \Omega$,
there exist $C,\rho$ and $\ep_0$ depending on $N$,
$dist(\Omega',\partial \Omega)$ and the function $\beta$ such
that, if $\ep\leq \ep_0$ and $x\in \Omega'$ then
$$
 u^{\ep}(x)\geq C\,
dist(x, \{u^{\ep}\leq \ep\})
$$
if $dist(x,\{u^{\ep}\leq \ep\})\leq\rho$.
\end{lema}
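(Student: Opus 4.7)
My approach follows Theorem 4.1 of \cite{BCN}, adapted to the Orlicz setting of the operator $\L$. The core idea is to construct an explicit one-dimensional barrier whose linear asymptote has slope $\Phi^{-1}(M)$, and to slide it against $u^\ep$ using the minimality of the latter.

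I would first build the 1D profile. Let $\psi:\R\to[0,\infty)$ be the solution of $(g(\psi'))'=\beta(\psi)$ with $\psi\equiv 0$ on $(-\infty,0]$ and $\psi'(0)=0$. Multiplying the ODE by $\psi'$ and integrating gives the first-integral identity $\Phi(\psi'(t))=B(\psi(t))$, where $B(s)=\int_0^s\beta$; in particular $\psi(\bar T)=1$ at a finite $\bar T>0$ with $\psi'(\bar T)=\Phi^{-1}(M)$, and $\psi$ continues affinely with slope $\Phi^{-1}(M)$ beyond $\bar T$. Setting $\phi^\ep(t):=\ep\,\psi(t/\ep)$, the planar function $V_e^\ep(x):=\phi^\ep(\langle x,e\rangle)$ is an $\L$-solution of $(P_\ep)$ in $\R^N$ for every unit vector $e$, vanishing on $\{\langle x,e\rangle\le 0\}$ and growing linearly with slope $\Phi^{-1}(M)$ far from this hyperplane.

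Next, for $x_0\in\Omega'$ with $d_0=\mbox{dist}(x_0,\{u^\ep\le\ep\})\le\rho$ small, I would choose a closest point $y_0\in\{u^\ep\le\ep\}$ and set $e:=(x_0-y_0)/d_0$. I then run a sliding argument on the family $V_s(x):=\phi^\ep(\langle x-y_0,e\rangle-s)$ inside a comparison domain $D\subset B_{2d_0}(y_0)\cap\Omega$. For large positive $s$, $V_s\equiv 0$ on $D$; decreasing $s$, there is a critical value $s^*\ge 0$ at which $V_{s^*}$ first contacts $u^\ep$ in the interior of $D$ from below. Using the minimality of $u^\ep$ against the competitor $h^\ep:=\min(u^\ep,V_{s^*}+\eta)$ (with a boundary adjustment $\eta\ge 0$ ensuring $h^\ep=u^\ep$ on $\partial D$), which is a strong supersolution of $(P_\ep)$ in $D$, one concludes $h^\ep\ge u^\ep$ in $D$, hence $V_{s^*}+\eta\ge u^\ep$ at the contact point. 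Letting $\eta\to 0$ and estimating $s^*\le C\ep$ (from the $O(\ep)$ thickness of the transition layer $\{0<u^\ep<\ep\}$ near $y_0$) yields $u^\ep(x_0)\ge\phi^\ep(d_0-C\ep)\ge C\,d_0$, provided $\rho$ and $\ep_0\le\rho$ are chosen small relative to $\mbox{dist}(\Omega',\partial\Omega)$.

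The main technical obstacle is justifying that $h^\ep=\min(u^\ep,V_{s^*}+\eta)$ is a strong supersolution in the regularity sense required by Definition \ref{minimal}, namely that the flux $g(|\nabla h^\ep|)\nabla h^\ep/|\nabla h^\ep|$ lies in $W^{1,1}(D)$. This would follow from the $C^{1,\alpha}$ estimates of \cite{Li1} applied on each side of the contact set $\{u^\ep=V_{s^*}+\eta\}$, combined with a distributional computation showing that the interface contribution to $\mbox{div}\bigl(g(|\nabla h^\ep|)\nabla h^\ep/|\nabla h^\ep|\bigr)$ has the right sign. This is the step where the non-homogeneity of $g$ complicates the analogous argument for the $p$-Laplacian in \cite{BCN}, and it is essential to use the Lieberman structure condition \eqref{cond} to control the off-diagonal derivatives of the flux uniformly across the contact set.
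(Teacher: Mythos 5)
Your barrier is built from a one-dimensional profile $\psi$ with $\psi\equiv 0$ on $(-\infty,0]$ and $\psi'(0)=0$, but under the hypotheses of the paper such a nontrivial profile need not exist. From the first integral $\Phi(\psi')=B(\psi)$ one gets $\psi'=\Phi^{-1}(B(\psi))$, so the time for $\psi$ to leave the zero state is $\int_0^1 d\psi/\Phi^{-1}(B(\psi))$. Because $\beta$ is Lipschitz with $\beta(0)=0$, one has $B(\psi)\le C\psi^2$ near $0$, while \eqref{cond} together with Lemma~\ref{prop}(g1) give $\Phi(s)\ge c\,s^{g_0+1}$ for small $s$ and hence $\Phi^{-1}(r)\le C\,r^{1/(g_0+1)}$; the integrand is therefore bounded below by $c\,\psi^{-2/(g_0+1)}$, and the integral diverges whenever $g_0\le 1$. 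This covers the Laplacian ($g_0=\delta=1$) and the $p$-Laplacian with $1<p\le2$; in all these cases the unique solution of the ODE with $\psi(0)=\psi'(0)=0$ is $\psi\equiv0$, so the sliding family $V^\ep_e$ is identically zero and cannot detect linear growth. Since \cite{BCN} is exactly the Laplacian, Theorem~4.1 there --- which the paper cites --- cannot rest on this profile. The related construction in Proposition~\ref{gammapositiva} of the present paper indeed uses a different profile, one with $w(0)=1$ and $w'(0)=\alpha>0$ that reaches zero with \emph{positive} slope at a finite $\bar s<0$ and is extended linearly, giving a kinked subsolution rather than a smoothly detaching solution.

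The use of minimality is also in the wrong direction. Setting $h^\ep=\min(u^\ep,V_{s^*}+\eta)$ with $h^\ep=u^\ep$ on $\partial D$, Definition~\ref{minimal} would give $h^\ep\ge u^\ep$, hence (since $h^\ep\le u^\ep$ by construction) $h^\ep=u^\ep$, i.e.\ $u^\ep\le V_{s^*}+\eta$ in $D$. Combined with the sliding inequality $V_{s^*}\le u^\ep$, this would force $u^\ep$ to coincide with the one-dimensional profile $V_{s^*}$ up to an error $\eta$, which is both too strong to hold in general and useless for the bound you seek, because $\eta$ is pinned from below by $\max_{\partial D}(u^\ep-V_{s^*})^+$ and cannot be sent to $0$. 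The lower bound you actually want, $u^\ep(x_0)\ge V_{s^*}(x_0)$, follows from the sliding alone; minimality never enters. But the sliding comparison, if valid, would apply to \emph{every} solution of $(P_\ep)$, whereas the Lemma is false for non-minimal solutions --- so the step where minimality must be used essentially is missing. Independently, $h^\ep$ is not a strong supersolution in the sense required by Definition~\ref{minimal}: across the contact set the gradient of the minimum has a jump, so the flux $g(|\nabla h^\ep|)\nabla h^\ep/|\nabla h^\ep|$ is not in $W^{1,1}(D)$; the $C^{1,\alpha}$ estimates of \cite{Li1} on each side do not remove this discontinuity. Finally, $s^*\le C\ep$ is asserted rather than proved, and proving it is circular: the sliding is also halted by the boundary requirement $V_s\le u^\ep$ on $\partial D$, which you cannot control without the very nondegeneracy at stake. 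The structure of Definition~\ref{minimal} calls instead for a contradiction argument: assume $u^\ep(x_0)<c\,d_0$, use Harnack in $B_{d_0}(x_0)$ to propagate smallness, and build a strong supersolution dominating $u^\ep$ on a boundary yet dropping below $u^\ep$ near $x_0$, contradicting minimality; this is the route Theorem~4.1 of \cite{BCN} actually takes.
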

\begin{proof}
The proof  follows the lines of Theorem 4.1 in \cite{BCN}.
\end{proof}

Then, by Theorems \ref{regularidadsing} and \ref{nodegfinal}, we
have  the following
\begin{teo}\label{ejemplo1}
Assume that $g$ satisfies conditions \eqref{cond} and
\eqref{condderivadagintro2}.  Let $u^{\ep_j}$ be uniformly bounded
minimal solutions to $(P_{\ep_j})$ in a domain
$\Omega\subset\mathbb{R}^N$ such that $u^{\ep_j}\rightarrow u$
uniformly in compact subsets of $\Omega$ as $\ep_j\to 0$.  
Then, $\Omega\cap\partial_{red}\{u>0\}\in C^{1,\alpha}$.
\end{teo}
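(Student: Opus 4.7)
The plan is to apply Theorem \ref{regularidadsing} at every $x_0\in\Omega\cap\partial_{red}\{u>0\}$. Two hypotheses must be checked there: (i) the existence of an inward unit normal to $\partial\{u>0\}$ in the measure-theoretic sense at $x_0$, which is automatic from the definition of the reduced free boundary; and (ii) uniform non-degeneracy of $u$ at the free boundary in a neighborhood of $x_0$. The nontrivial work is (ii).

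To verify (ii), I would first feed the minimal-solution hypothesis into the lemma preceding Theorem \ref{ejemplo1}: on every $\Omega'\subset\subset\Omega$ this gives the linear growth
$$u^{\ep_j}(x)\ge C\,\mbox{dist}(x,\{u^{\ep_j}\le\ep_j\})\quad\text{whenever}\quad \mbox{dist}(x,\{u^{\ep_j}\le\ep_j\})\le\rho\text{ and }\ep_j\le\ep_0.$$
For $x\in\{u^{\ep_j}>\ep_j\}$ this distance equals $\mbox{dist}(x,\partial\{u^{\ep_j}>\ep_j\})$; combined with the uniform Lipschitz bound of Corollary \ref{gradsing} and the standing uniform $L^\infty$ bound, the hypotheses of Corollary \ref{coro-nondeg} are met on any $\Omega''\subset\subset\Omega'\subset\subset\Omega$, after fixing $c_1>1$ and shrinking $\Omega''$ so that $\tfrac12\mbox{dist}(\cdot,\partial\Omega')\le\rho$ on $\Omega''$.

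Corollary \ref{coro-nondeg} then yields constants $c_0,r_0>0$ such that $\sup_{B_r(x_0)}u\ge c_0 r$ for every free-boundary point $x_0\in\Omega''$ and every $r\le r_0$. To pass from this sup non-degeneracy to the integral non-degeneracy required by Definition \ref{nondegener}, use the local Lipschitz bound $L=\|\nabla u\|_{L^\infty(\Omega')}$: if $u(y)\ge c_0 r$ for some $y\in B_r(x_0)$, then $u\ge c_0 r/2$ on the ball $B_{c_0 r/(2L)}(y)$, which is contained in $B_{2r}(x_0)$. Integrating gives $(2r)^{-N}\int_{B_{2r}(x_0)}u\,dx\ge c'r$ with $c'$ independent of $x_0$; hence $u$ is uniformly non-degenerate on $\Omega''\cap\partial\{u>0\}$.

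With (i) and (ii) in hand, Theorem \ref{regularidadsing} applies at each $x_0\in\Omega\cap\partial_{red}\{u>0\}$ and provides $r>0$ with $B_r(x_0)\cap\partial\{u>0\}\in C^{1,\alpha}$; the global statement follows by covering. I expect the only subtlety is the bookkeeping of the nested domains $\Omega''\subset\subset\Omega'\subset\subset\Omega$ so that both the distance condition in the minimal-solution lemma and the condition $\mbox{dist}(x_0,\partial\{u>0\})<\tfrac12\mbox{dist}(x_0,\partial\Omega'')$ of Corollary \ref{coro-nondeg} hold uniformly around each $x_0$; once this is set up, the argument is a direct assembly of previously established results.
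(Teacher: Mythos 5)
Your proposal is correct and follows essentially the same route as the paper, which proves this theorem by combining the linear-growth lemma for minimal solutions with Theorem \ref{nodegfinal} (via Corollary \ref{coro-nondeg}) and then invoking Theorem \ref{regularidadsing} at reduced free boundary points. The extra details you supply (the passage from sup non-degeneracy to the integral non-degeneracy of Definition \ref{nondegener} using the uniform Lipschitz bound, and the nesting of domains) are exactly the steps the paper leaves implicit, and they are carried out correctly.
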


\bigskip

\subsection{Example 2}

We  consider  solutions of \eqref{ecsing} that are local
minimizers of the functional:
\begin{equation}\label{funbep2}
J_{\ep}(v)=\int_{\Omega} [G(|\nabla v|) + B_{\ep}(v)] \, dx
\end{equation}
where $B_{\ep}'(s)=\beta_{\ep}(s)$. This is, for any
$\Omega'\subset\subset\Omega$,
 $u^{\ep}$ minimizes
 $$\int_{\Omega'} [G(|\nabla v|) + B_{\ep}(v)] \, dx$$ in
$u^{\ep}+W^{1,G}(\Omega')$.

By Theorem \ref{nodegfinal}, in order to prove the nondegeneracy
we only need to prove the   linear growth out of
$\partial\{\uep>\ep\}$.
 The proof follows the
lines of Corollary 1.7 in \cite{CS}.
\begin{lema}
Given $c_1>1$ there exists a constant $C$ such that if $u^{\ep}$
is a local minimizer of $J_{\ep}$ in $B_1$ and
$u^{\ep}(x_0)>c_1\ep$, $x_0\in B_{1/4}$, then
$$ u^{\ep}(x_0)\geq C
dist(x_0, \{u^{\ep}\leq \ep\})$$ if $dist(x_0,\{u^{\ep}\leq
\ep\})\leq 1/4$.
\end{lema}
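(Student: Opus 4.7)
The plan is to apply the minimality of $u^\ep$ against a competitor that is forced to vanish on a ball well inside the set $\{u^\ep>\ep\}$, exploiting the uniform strict positivity $B_\ep(u^\ep)\equiv M$ on that set. Set $d:=\mathrm{dist}(x_0,\{u^\ep\le\ep\})$, which is positive since $u^\ep(x_0)>c_1\ep>\ep$. By definition $B_d(x_0)\subset\{u^\ep>\ep\}$, and because $\beta$ is supported in $(0,1)$ we have $\L u^\ep=0$ and $B_\ep(u^\ep)\equiv M$ on $B_d(x_0)$. Moreover, since $d\le 1/4$ and $x_0\in B_{1/4}$, the ball $B_{d/2}(x_0)$ is compactly contained in $B_1$, so the local minimality of $u^\ep$ applies there.

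First I would record the bounds on $u^\ep$ and $\nabla u^\ep$ in $B_{d/2}(x_0)$. Since $u^\ep>0$ is an $\L$-solution in $B_d(x_0)$, the Harnack inequality from \cite{Li1} gives $\sup_{B_{3d/4}(x_0)}u^\ep\le C\,u^\ep(x_0)$, and the interior gradient estimate for $\L$-solutions (again \cite{Li1}) yields $|\nabla u^\ep|\le C\,u^\ep(x_0)/d$ on $B_{d/2}(x_0)$. Next I would choose a cutoff $\eta\in C_c^\infty(B_{d/2}(x_0))$ with $\eta\equiv 1$ on $B_{d/4}(x_0)$ and $|\nabla\eta|\le C/d$, and set $v:=(1-\eta)u^\ep$. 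Then $v\in u^\ep+W^{1,G}_0(B_{d/2}(x_0))$, $v\equiv 0$ on $B_{d/4}(x_0)$, and the pointwise bound
\begin{equation*}
|\nabla v|\le(1-\eta)|\nabla u^\ep|+u^\ep|\nabla\eta|\le C\,\frac{u^\ep(x_0)}{d}
\end{equation*}
holds on $B_{d/2}(x_0)\setminus B_{d/4}(x_0)$, while $\nabla v\equiv 0$ on $B_{d/4}(x_0)$.

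Finally, the minimality $J_\ep(u^\ep)\le J_\ep(v)$ in $B_{d/2}(x_0)$ reads
\begin{equation*}
\int_{B_{d/2}(x_0)}\!\![B_\ep(u^\ep)-B_\ep(v)]\,dx\le\int_{B_{d/2}(x_0)\setminus B_{d/4}(x_0)}\!\![G(|\nabla v|)-G(|\nabla u^\ep|)]\,dx.
\end{equation*}
The left-hand side is bounded below by $\int_{B_{d/4}(x_0)}M\,dx=c_N M\,d^N$, since $B_\ep(u^\ep)=M$ on $B_d(x_0)$ and $B_\ep(v)=0$ on $B_{d/4}(x_0)$. The right-hand side, using the gradient bound on $v$ and discarding $-G(|\nabla u^\ep|)\le 0$, is at most $C\,G\!\bigl(C\,u^\ep(x_0)/d\bigr)\,d^N$. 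Combining, $M\le C\,G\!\bigl(C\,u^\ep(x_0)/d\bigr)$, and since $G$ is continuous and strictly increasing with $G(0)=0$, this forces $u^\ep(x_0)/d\ge C_0>0$ for a constant $C_0$ depending only on $N$, $M$, $g$, $c_1$ and the $L^\infty$-bound on $u^\ep$. The main obstacle is handling the gradient term of the competitor: one must obtain a bound on $|\nabla v|$ whose contribution to $\int G(|\nabla v|)$ scales like $d^N$, and this is exactly what Harnack plus the Lieberman interior gradient estimate deliver; the hypothesis $c_1>1$ serves to make $u^\ep(x_0)-\ep$ comparable to $u^\ep(x_0)$ in case one prefers to apply Harnack to the positive $\L$-harmonic function $u^\ep-\ep$.
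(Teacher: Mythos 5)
Your proof is essentially correct and matches the approach the paper defers to (the energy-comparison argument of Theorem~1.6 in Caffarelli--Salsa), with the right nonlinear substitutes: a cutoff competitor $v=(1-\eta)u^\ep$ in place of the harmonic replacement, and Harnack plus an interior gradient estimate in place of the explicit Laplacian tools. The bookkeeping is correct: the left side of the minimality inequality is bounded below by $M\,|B_{d/4}(x_0)|$ since $B_\ep(u^\ep)\equiv M$ on $B_d(x_0)$ and $B_\ep(v)=0$ on $B_{d/4}(x_0)$, and $B_\ep$ is monotone so the annulus contribution is nonnegative; on the right, dropping the nonpositive $-G(|\nabla u^\ep|)$ terms and bounding $G(|\nabla v|)$ pointwise gives $M\leq C\,G(C\,u^\ep(x_0)/d)$, which forces the conclusion. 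The one step that deserves to be spelled out is the bound $|\nabla u^\ep|\leq C\,u^\ep(x_0)/d$ on $B_{d/2}(x_0)$: since $\L$ is not invariant under multiplication by constants, you cannot apply the Lieberman gradient estimate directly to $u^\ep$ and divide. Instead set $w(y)=u^\ep(x_0+dy)/u^\ep(x_0)$, which solves $\L_{\tilde g}w=0$ in $B_1$ for $\tilde g(t)=g(u^\ep(x_0)t/d)$; this $\tilde g$ satisfies condition \eqref{cond} with the \emph{same} $\delta,g_0$, so Harnack gives $\sup_{B_{3/4}}w\leq C(N,\delta,g_0)$ and the Lieberman interior gradient estimate for $\L_{\tilde g}$-solutions then gives $\sup_{B_{1/2}}|\nabla w|\leq C(N,\delta,g_0)$, which undoes to the bound you want. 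This is exactly the device the paper uses in Lemma~\ref{itera1} (there for Harnack only), and it closes the only point where a reader might balk. Incidentally, as you note, the hypothesis $c_1>1$ is not actually used in this argument; any $c_1\geq 1$ would do, and in fact it is precisely the other lemma (nondegeneracy, via Lemma~\ref{itera1} and Theorem~\ref{nodegfinal}) that needs $c_1>1$.
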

\begin{proof}
 The proof follows as in Theorem 1.6 in \cite{CS}.
\end{proof}

Therefore, we have that minimizers satisfy the uniform
nondegeneracy condition.

Now, we want to prove that for the limiting function we have that
almost every point of the free boundary belongs to the reduced
free boundary. To this end, we will prove that the limiting
function is a minimizer of the problem treated in \cite{MW1}. We
will follow the steps of Theorem 1.16 in \cite{CS}. We will only
give the details when the proof parts from the one in \cite{CS}.

First we want to estimate the measure of the level sets
$\partial\Omega_{\lambda}$ where
$\Omega_\lambda=\{\uep>\lambda\}$.

For a given set $\mathcal D$ we denote by $\mathcal
N_\delta(\mathcal D)$ the set of points $x$ such that
$dist(x,\mathcal D)<\delta$.
\begin{teo}\label{nmenos1}
Given $c_1>1$ there exist $c_2,c_3>0$ such that if $\lambda\geq
c_1\ep$ and $1/4\geq \delta \geq c_2 \lambda$ then, for $R<1/4$ we
have,
$$|\mathcal{N}_{\delta}(\partial \Omega_{\lambda})\cap B_R)|\leq c_3 \delta
R^{N-1}.$$
\end{teo}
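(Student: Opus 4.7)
The inequality is a tube-measure estimate for the level set $\partial\Omega_\lambda$, morally equivalent to a perimeter bound, and the strategy adapts the proof of Theorem~1.16 of \cite{CS} to the present Orlicz/Lieberman setting. The first step is a Vitali reduction: extract a maximal family $\{x_i\}_{i=1}^k \subset \partial\Omega_\lambda\cap B_R$ with $|x_i-x_j|\ge\delta/5$, so that $\{B_{\delta/5}(x_i)\}$ are disjoint, $\{B_\delta(x_i)\}$ cover $\partial\Omega_\lambda\cap B_R$, and the uniform Lipschitz estimate (Corollary~\ref{gradsing}) gives $\mathcal N_\delta(\partial\Omega_\lambda)\cap B_R\subset\bigcup_i B_{2\delta}(x_i)$; thus the left-hand side of the theorem is at most $C k \delta^N$, and it suffices to prove the ball-count bound $k\le c_3(R/\delta)^{N-1}$.

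The second step is a two-sided density lemma inside each ball. Under the hypothesis $\delta\ge c_2\lambda$ with $c_2$ large, on each $B_{\delta/2}(x_i)$ one has $u^\ep\le c_1\delta$ throughout (by Lipschitz continuity, since $u^\ep(x_i)=\lambda\le \delta/c_2$), a point with $u^\ep\le\ep$ (by the linear-growth lemma preceding the theorem, which forces $\operatorname{dist}(x_i,\{u^\ep\le\ep\})\le\lambda/C\ll\delta$), and a point $z_i$ with $u^\ep(z_i)\ge c_0\delta/10$ (by Theorem~\ref{nodegfinal}). Using the Lipschitz bound again, $u^\ep\ge c_0\delta/20$ on a ball of radius $c_0\delta/(20L)$ around $z_i$, which for $c_2$ large lies inside $B_{\delta/2}(x_i)\cap\{u^\ep>\ep\}$; hence $|\{\ep<u^\ep\le c_1\delta\}\cap B_{\delta/2}(x_i)|\ge c\delta^N$ uniformly in $i$, which is the bulk reservoir that powers the competitor argument.

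The third step is a single global competitor. Let $\phi$ be a smooth cutoff with $\phi\equiv 1$ on $B_{R-\delta}$, $\phi\equiv 0$ outside $B_R$, and $|\nabla\phi|\le C/\delta$, and set $v:=\max(u^\ep-c_1\delta\,\phi,0)$; then $v=u^\ep$ outside $B_R$ (so $v-u^\ep$ is a valid perturbation), $v=\max(u^\ep-c_1\delta,0)$ on $B_{R-\delta}$, and on the annulus $B_R\setminus B_{R-\delta}$ one has $|\nabla v|\le L+Cc_1$ by direct computation. On $\{\ep<u^\ep\le c_1\delta\}\cap B_{R-\delta}$ we have $v\equiv 0$ and $B_\ep(u^\ep)=M$, so Step~2 (summed over the indices with $B_{\delta/2}(x_i)\subset B_{R-\delta}$) gives
\begin{equation*}
\int_{B_R}[B_\ep(u^\ep)-B_\ep(v)]\,dx\ge c'Mk\delta^N,\qquad \int_{B_R}[G(|\nabla v|)-G(|\nabla u^\ep|)]\,dx\le C\,G(L+Cc_1)\,\delta R^{N-1},
\end{equation*}
and minimality $J_\ep(u^\ep)\le J_\ep(v)$ yields $c'Mk\delta^N\le C\,G(L+Cc_1)\,\delta R^{N-1}$, hence $k\le c_3(R/\delta)^{N-1}$ as desired. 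The main obstacle is making the lower bound in Step~2 fully quantitative in the Orlicz/Lieberman setting, since all of the constants ultimately depend on the Lipschitz bound $L$ and the nondegeneracy constant $c_0$, both controlled via condition~\eqref{cond}; a secondary technical point is the boundary correction in Step~3 (the indices with $B_{\delta/2}(x_i)\not\subset B_{R-\delta}$ number at most $O((R/\delta)^{N-1})$ and can be handled directly or absorbed by a dichotomy).
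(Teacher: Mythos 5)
Your proposal is correct in its overall structure, but it takes a genuinely different route from the paper's. The paper splits the proof into two lemmas: Lemma \ref{lemma7.5} covers $\mathcal N_\delta(\partial\Omega_\lambda)\cap B_R$ by balls $B_\delta(x_j)$ of bounded overlap and, using the same two ingredients you use (nondegeneracy via Theorem \ref{nodegfinal} and the uniform Lipschitz bound) to produce one subball where $(u^\ep-\lambda)^+\gtrsim\delta$ and one where it is $\lesssim\delta$, applies a Poincar\'e--Jensen oscillation estimate to get $\int_{B_j}G(|\nabla u^\ep|)\,dx\ge C|B_j|$, reducing the tube measure to the strip energy $\int_{\{\lambda<u^\ep<C_1\delta\}\cap B_{R+\delta}}G(|\nabla u^\ep|)\,dx$; Lemma \ref{gradgrad} then bounds this strip energy by $C\delta R^{N-1}$ by testing the equation $\L u^\ep=0$ in $\{u^\ep>\ep\}$ with the truncation $\min\{(u^\ep-\lambda)^+,\delta-\lambda\}$ (justified through the $W^{2,2}$ approximation of Lemma \ref{sale}) and estimating the flux through $\partial B_R$. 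You instead use minimality directly: the competitor $v=\max(u^\ep-C\delta\phi,0)$ trades the drop of $\int B_\ep$ on a reservoir of measure $\gtrsim k\delta^N$ inside the covering balls against the $G$-energy increase on the annulus $B_R\setminus B_{R-\delta}$, which is $\lesssim G(L+C)\,\delta R^{N-1}$. Both routes rest on the same Lipschitz and linear-growth/nondegeneracy inputs; yours is the classical Alt--Caffarelli truncation argument, needs no flux identity or $W^{2,2}$ approximation, but does require minimality (legitimate here, since Theorem \ref{nmenos1} is stated for local minimizers of $J_\ep$), whereas the paper's argument uses only the PDE in $\{u^\ep>\ep\}$ together with Lipschitz continuity and nondegeneracy, and so would also apply to non-minimizing solutions enjoying linear growth.

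Two repairs are needed, both minor. First, the truncation height cannot be $c_1\delta$: the constant $c_1>1$ is fixed by the hypothesis $\lambda\ge c_1\ep$ and is not at your disposal, and on $B_{\delta/2}(x_i)$ the Lipschitz bound only gives $u^\ep\le\lambda+L\delta/2\le(c_2^{-1}+L/2)\delta$, which exceeds $c_1\delta$ when $L$ is large; truncate instead at $C_1\delta$ with $C_1=c_2^{-1}+L$ (exactly the constant in the paper's Lemma \ref{lemma7.5}). With this change your displayed inequalities hold with $G(L+CC_1)$ in place of $G(L+Cc_1)$, and the reservoir ball $B_{c_0\delta/(20L)}(z_i)$ does lie in $\{\ep<u^\ep\le C_1\delta\}\cap B_{\delta/2}(x_i)$, its lower bound $c_0\delta/20>\ep$ following from $\delta\ge c_2\lambda\ge c_2\ep$ with $c_2$ large, as you indicate. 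Second, the Vitali net should be taken in $\partial\Omega_\lambda\cap B_{R+\delta}$ rather than $\partial\Omega_\lambda\cap B_R$, since a point of $\mathcal N_\delta(\partial\Omega_\lambda)\cap B_R$ may be $\delta$-close only to a level-set point just outside $B_R$; also the point with $u^\ep\le\ep$ that you extract in Step 2 plays no role in the competitor argument and can be dropped. Neither issue affects the structure of your proof.
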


%
%
%

In order to prove this theorem,  we need two  lemmas.

\begin{lema}\label{gradgrad} If $\lambda>\ep$ and $R\leq 3/4$ then,
$$\int_{\{\lambda<u^{\ep}<\delta\}\cap B_R} G(|\nabla u^{\ep}|)\,
dx \leq c \delta R^{N-1}.$$
\end{lema}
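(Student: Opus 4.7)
\textbf{Proof plan for Lemma \ref{gradgrad}.}

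The plan is the classical minimality–comparison argument: construct a competitor $v$ that flattens $u^\ep$ down to the value $\lambda$ on the slice $\{\lambda<u^\ep<\delta\}$, and exploit the fact that $\lambda>\ep$ makes the potential term $B_\ep$ cancel. First, observe that if $R\le\delta$ the conclusion is trivial: by Corollary \ref{gradsing}, $|\nabla u^\ep|\le L$ near $B_{3/4}$, whence $\int_{B_R}G(|\nabla u^\ep|)\le G(L)|B_R|\le CR^N\le C\delta R^{N-1}$. So assume $\delta\le R$, and set $R'=R+\delta$.

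Choose a cutoff $\eta\in C_0^\infty(B_{R'})$ with $\eta\equiv 1$ on $B_R$, $0\le\eta\le 1$, $|\nabla\eta|\le C/\delta$. Define $\phi(t):=\min\{(t-\lambda)^+,\delta-\lambda\}$ and set $v:=u^\ep-\eta\,\phi(u^\ep)$. Then $v=u^\ep$ on $\partial B_{R'}$, so $v$ is an admissible competitor for the local minimality of $u^\ep$ on $B_{R'}$. The key point is that $B_\ep(v)\equiv B_\ep(u^\ep)$ pointwise on $B_{R'}$: on $\{u^\ep\le\lambda\}$, $\phi=0$ and so $v=u^\ep$; on $\{u^\ep>\lambda\}$, we have $v\ge u^\ep-\phi(u^\ep)\ge\lambda>\ep$, so both $u^\ep$ and $v$ lie in $[\ep,\infty)$ where $B_\ep\equiv M$. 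Here the hypothesis $\lambda>\ep$ is used crucially.

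With $B_\ep$-cancellation, local minimality of $J_\ep$ reduces to
$$\int_{B_{R'}}G(|\nabla u^\ep|)\,dx\le\int_{B_{R'}}G(|\nabla v|)\,dx.$$
On $B_R$, $\eta\equiv 1$, so a direct calculation shows $\nabla v=0$ on $\{\lambda<u^\ep<\delta\}$ and $\nabla v=\nabla u^\ep$ on its complement. Subtracting common terms over $B_R$,
$$\int_{B_R\cap\{\lambda<u^\ep<\delta\}}G(|\nabla u^\ep|)\,dx\le\int_{B_{R'}\setminus B_R}\bigl[G(|\nabla v|)-G(|\nabla u^\ep|)\bigr]\,dx.$$

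It remains to bound the boundary-layer error. In $B_{R'}\setminus B_R$,
$$|\nabla v|\le|\nabla u^\ep|+\phi\,|\nabla\eta|\le L+C(\delta-\lambda)/\delta\le L+C,$$
so $G(|\nabla v|)\le G(L+C)$. Since also $|B_{R'}\setminus B_R|\le N\omega_N(R+\delta)^{N-1}\delta\le CR^{N-1}\delta$ (using $\delta\le R$), the error is bounded by $CR^{N-1}\delta$, proving the lemma.

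The principal delicate points are the $B_\ep$-cancellation (which pins down why the truncation must raise $v$ above $\ep$ wherever $u^\ep>\lambda$, and thus why $\lambda>\ep$ appears) and the scaling of the cutoff: picking $R'-R=\delta$ rather than the more usual $R'-R=R$ is exactly what converts the volume $|B_{R'}\setminus B_R|\sim R^{N-1}(R'-R)$ into the sharp factor $R^{N-1}\delta$; taking $|\nabla\eta|\le C/\delta$ is harmless because it is compensated by $\phi\le\delta-\lambda$.
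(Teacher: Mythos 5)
Your proof is correct, and it takes a genuinely different route from the paper. The paper argues via the PDE: it first establishes the integration-by-parts identity
$$\int_{B_R}F(|\nabla u^\ep|)\,\nabla u^\ep\cdot\nabla w\,dx=\int_{\partial B_R}w\,F(|\nabla u^\ep|)\,\frac{\partial u^\ep}{\partial\nu}\,d\H$$
for $w\in W^{1,G}(B_R)$ supported in $\{u^\ep\ge\lambda\}$ (where $\L u^\ep=0$ because $\lambda>\ep$), justifying the $W^{2,2}$ regularity needed for the integration by parts via the uniformly elliptic approximation $g_n(t)=g(t)+t/n$ from Lemma \ref{sale}; it then inserts $w=\min\{(u^\ep-\lambda)^+,\delta-\lambda\}$, notes that the left side dominates $\int_{\{\lambda<u^\ep<\delta\}\cap B_R}G(|\nabla u^\ep|)$ by (g3) of Lemma \ref{prop}, and bounds the surface term by $g(L)\,(\delta-\lambda)\,\H(\partial B_R)\le C\delta R^{N-1}$. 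You instead exploit the variational structure directly: the same truncation reappears as the outer part $\phi(u^\ep)$ of your competitor $v=u^\ep-\eta\,\phi(u^\ep)$, and the estimate comes from comparing $J_\ep$-energies on the enlarged ball $B_{R+\delta}$. Both arguments rest on the same two observations --- $\lambda>\ep$ places the whole truncation in the region where $B_\ep\equiv M$, and $\phi\le\delta$ is matched against a surface (or thin shell) of size $R^{N-1}$ --- but your competitor argument replaces the $W^{2,2}$-regularization step by a cutoff on a slightly larger ball, which is arguably more elementary. The one thing you should make explicit is admissibility: the competitor requires $B_{R+\delta}\subset\subset B_1$, which is not forced by $R\le 3/4$ alone; in the application (Theorem \ref{nmenos1}) one has $R<1/4$ and $\delta\le 1/4$, so this is harmless, but it deserves a remark, whereas the paper's boundary-integral argument lives entirely on $B_R$.
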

\begin{proof}
First, let us prove that  for  $w\in W^{1,G}(B_R)$ such that
$supp\,w\subset\{\uep\ge \lambda\}$ with $\lambda>\ep$, we have
\begin{equation}\label{derivada2}
\int_{B_R} F(|\nabla u^{\ep}|) \nabla  u^{\ep}\nabla w\, dx =
\int_{\partial B_R} w \,F(|\nabla\uep|)\,\frac{\partial
u^{\ep}}{\partial \nu}\, d\H.
\end{equation}

 We follow the ideas in the proof of Lemma \ref{sale}. This is, we
 suppose first that $F(t)\geq c t$ and then, we use an
approximation argument as in that lemma.

If $F(t)\geq ct$ then, by the  estimates of \cite{LU}, we have
that the solutions are in $W^{2,2}(\Omega)$, so equation
\eqref{derivada2} follows by integrating by parts and using that
$\L u^{\ep}=0$ in $\{u^{\ep}>\ep\}$. Finally we use the
approximation argument of Lemma \ref{sale} and the result follows.

Now, let $w=\min\{(u^{\ep}-\lambda)^+,\delta-\lambda\}$. Then,
$w\in W^{1,G}(B_R)$,  $supp\,w\subset\{\uep\ge \lambda\}$ so that,
by \eqref{derivada2} we have,
$$\int_{\{\lambda<u^{\ep}<\delta\} \cap B_R} G(|\nabla u^{\ep}|)\,
dx \leq C \int_{\partial B_R} w\,F(|\nabla\uep|)\, \frac{\partial
u^{\ep}}{\partial \nu}\, d\H\leq C \delta R^{N-1}$$ and the result
follows.
\end{proof}

\begin{lema}\label{lemma7.5}
Given $c_1>1$ there exist $C_1,C_2, c_2>0$ such that, if
$\lambda\geq c_1 \ep$ and $1/8>\delta\geq c_2 \lambda$  we have,
for $R<1/4$,
$$|\mathcal{N}_{\delta}(\partial \Omega_{\lambda})\cap
B_{R}|\leq C_2 \int_{\{\lambda<u^{\ep}<C_1 \delta \}\cap
B_{R+\delta}} G(|\nabla u^{\ep}|) \, dx. $$
\end{lema}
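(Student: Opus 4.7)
The plan is to combine a Vitali-type cover of $\mathcal{N}_\delta(\partial\Omega_\lambda)\cap B_R$ by balls centered on the free boundary $\partial\Omega_\lambda$ with a local lower bound for $\int G(|\nabla u^\ep|)$ on each such ball. More precisely, a standard covering argument produces a finite family $\{x_i\}_{i=1}^N\subset \partial\Omega_\lambda$ such that the small balls $B_{\delta/4}(x_i)$ are pairwise disjoint and contained in $B_{R+\delta}$, while the enlargements $B_{2\delta}(x_i)$ still cover $\mathcal{N}_\delta(\partial\Omega_\lambda)\cap B_R$. This immediately yields $|\mathcal{N}_\delta(\partial\Omega_\lambda)\cap B_R|\le C\,N\,\delta^N$, so the proof reduces to the local lower bound
$$
\int_{\{\lambda<u^\ep<C_1\delta\}\cap B_{\delta/4}(x_i)} G(|\nabla u^\ep|)\,dx\ge c_3\,\delta^N\qquad(\ast)
$$
valid for each $i$.

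To establish $(\ast)$ I would argue as follows. Since $\|\nabla u^\ep\|_{L^\infty}\le L$ by Corollary \ref{gradsing}, and $u^\ep(x_i)=\lambda$, Lipschitz continuity forces $u^\ep\le \lambda+L\delta/4\le C_1\delta$ throughout $B_{\delta/4}(x_i)$, provided $C_1\ge L/4+1/c_2$, so the upper constraint $u^\ep<C_1\delta$ is automatic on this ball. Next, because $\lambda>c_1\ep$, Theorem \ref{nodegfinal} applied at $x_i$ with $r=\delta/8$ (admissible once $\delta$ is small) produces a point $y_i\in B_{\delta/8}(x_i)$ with $u^\ep(y_i)\ge c_0\,\delta/8$; choosing $c_2\ge 16/c_0$ then gives $w(y_i):=u^\ep(y_i)-\lambda\ge c_0\,\delta/16$, while $w(x_i)=0$ and $|\nabla w|\le L$. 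Writing $\bar w$ for the average of $w$ on $B_{\delta/4}(x_i)$, I split into two cases: if $\bar w\le c_0\delta/32$ then Lipschitz continuity forces $w\ge c_0\delta/32$ on a ball of radius comparable to $\delta$ around $y_i$, and hence $|w-\bar w|\gtrsim\delta$ on a set of measure $\gtrsim\delta^N$; if instead $\bar w>c_0\delta/32$, the analogous conclusion follows from a small Lipschitz neighborhood of $x_i$, where $w$ remains close to $0$. In either case
$$
\int_{B_{\delta/4}(x_i)}|w-\bar w|^{g_0+1}\,dx\gtrsim\delta^{N+g_0+1},
$$
and the Poincar\'e--Wirtinger inequality on the ball yields $\int_{B_{\delta/4}(x_i)}|\nabla w|^{g_0+1}\,dx\gtrsim\delta^N$. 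Finally, condition \eqref{cond} implies $G(t)\gtrsim t^{g_0+1}$ on the bounded range $[0,L]$, and since $\nabla w=\nabla u^\ep\,\chi_{\Omega_\lambda}$, this gives $(\ast)$.

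To conclude, I would sum $(\ast)$ over the disjoint balls $B_{\delta/4}(x_i)\subset B_{R+\delta}$, obtaining $Nc_3\,\delta^N\le \int_{\{\lambda<u^\ep<C_1\delta\}\cap B_{R+\delta}} G(|\nabla u^\ep|)\,dx$, which combined with the covering bound on $|\mathcal{N}_\delta(\partial\Omega_\lambda)\cap B_R|$ yields the desired inequality. The main technical difficulty I foresee is $(\ast)$: this is precisely where the hypothesis $\delta\ge c_2\lambda$ is used, since the nondegeneracy scale $c_0\delta$ must dominate $\lambda$ in order to produce an oscillation of $w$ of order $\delta$ on $B_{\delta/4}(x_i)$. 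A secondary, more routine, point is a careful calibration of the constants in the Vitali covering so that the disjoint balls fit inside $B_{R+\delta}$ while the enlargements still cover $\mathcal{N}_\delta(\partial\Omega_\lambda)\cap B_R$; this follows from the geometric control $\delta<1/8$ and $R<1/4$.
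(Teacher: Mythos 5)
Your proposal is correct and follows essentially the same strategy as the paper's proof: cover $\mathcal{N}_\delta(\partial\Omega_\lambda)\cap B_R$ by $\delta$-scale balls centered on $\partial\Omega_\lambda$, pair the nondegeneracy bound (which gives a point where $u^\ep-\lambda\gtrsim\delta$) with the Lipschitz bound near $x_i$ (which gives a point where $u^\ep-\lambda\lesssim\delta$), extract an oscillation of $(u^\ep-\lambda)^+$ of order $\delta$ on a subball of radius $\sim\delta$, run Poincar\'e, and sum. The only divergences are technical: you use a Vitali cover (disjoint cores, dilated covering balls) where the paper uses a bounded-overlap cover of $\delta$-balls, and you invoke the $L^{g_0+1}$ Poincar\'e--Wirtinger inequality followed by the bound $G(t)\gtrsim t^{g_0+1}$ on bounded ranges, whereas the paper applies Jensen's inequality (convexity of $G$) to the $L^1$ Poincar\'e inequality and gets $\frac{1}{|B_j|}\int_{B_j} G(|\nabla v|)\,dx \ge G\bigl(\tfrac{1}{|B_j|}\int_{B_j}|\nabla v|\,dx\bigr)\ge G(c)$ directly. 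The Jensen route is marginally cleaner because it does not need the polynomial comparison for $G$ and so works verbatim for any convex $G$, but both are sound. One notational slip worth fixing: for the identity $\nabla w=\nabla u^\ep\,\chi_{\Omega_\lambda}$ to hold (and hence for the integral to localize to $\{u^\ep>\lambda\}$), you need $w=(u^\ep-\lambda)^+$, not $u^\ep-\lambda$; your earlier uses of $w$ are consistent with this but you should say it explicitly.
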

\begin{proof}
First, we cover
$\mathcal{N}_{\delta}(\partial\Omega_{\lambda})\cap B_{R}$ with
balls $B_j=B_{\delta}(x_j)$ with centers $x_j\in
\partial\Omega_{\lambda}\cap B_{R}$ which overlap at most by $n_0$
(with $n_0=n_0(N)$).

We claim that in each  of these balls there exist two subballs
$B_j^1$ and $B_j^2$ with radii $r_j=C\,\delta$ with $C$ to be
fixed below such that, if $v=(u^{\ep}-\lambda)^+$ then,
$$v\geq \frac{c_0}{8}\delta \quad \mbox{ in } B_j^1,\quad\quad v\leq \frac{c_0}{16}\delta \quad \mbox{ in }
B_j^2,$$ where $c_0$ is the constant of nondegeneracy for balls
centered in $B_{1/4}$ with radii at most $1/8$.

In fact, take $B_j^2=B_{r_j}(x_j)$ with $r_j=\frac{c_0}{16
L}\delta$ (here $\|\nabla u^{\ep}\|_{L^{\infty}(B_{3/4})}\le L$).
Observe that, since $u^{\ep}(x_j)=\lambda$ then,  $v(x)\leq L
r_j=\frac{c_0}{16}\delta$ if $x\in B_j^2$.

Let now, $y_j\in \overline{B_{\delta/4}(x_j)}$ such that
$$u^{\ep}(y_j)=\sup_{B_{\delta/4}(x_j)} u^{\ep} \geq c_0
\frac{\delta}{4}.$$

Let $B_j^1=B_{r_j}(y_j)$.  Then, if $x\in B_j^1$,
$$u^{\ep}(x)\geq u^{\ep}(y_j)-L r_j\geq c_0 \frac{\delta}{4} -L
r_j.
 $$

 Thus,
$$u^{\ep}(x)-\lambda \geq c_0 \frac{\delta}{4} -L
r_j -\lambda \geq  (\frac{c_0}{4} - \frac{c_0}{16}-
c_2^{-1})\delta\geq \frac{c_0}{8} \delta$$ if $c_2^{-1}\leq
\frac{c_0}{16}$.

Let $m_j=\pint_{B_j} v$. We claim that in one of the balls
$B_j^1$, $B_j^2$ we must have $|v-m_j|\geq c\delta$ for a certain
constant $c>0$.

Suppose by contradiction that there exist $x_1\in B_j^1$ and
$x_2\in B_j^2$ with
$$|v(x_1)-m_j|< c\delta \quad \quad |v(x_2)-m_j|<
c\delta.
$$

Then,
$$\frac{c_0}{8}\delta-\frac{c_0}{16}\delta\leq
v(x_1)-v(x_2)<2 c\delta$$ which is a contradiction if we take
$c_0/16\geq 2c$.

Therefore, if $k$ is such that  $|B_j^1|=|B_j^2|=k|B_j|$ we have,
by the convexity of $G$ and  Poincare inequality that,
$$\frac{1}{|B_j|}\int_{B_j}
G(|\nabla v|)\, dx\geq G\Big(\frac{1}{|B_j|}\int_{B_j} |\nabla
v|\, dx\Big)\geq G\Big(\frac{C}{|B_j|}\int_{B_j} \frac{|
v-m_j|}{\delta}\, dx\Big)\geq G\Big(\frac{C}{|B_j|} k|B_j|
c\Big).$$

This implies that
$$\int_{B_j}
G(|\nabla v|)\, dx\geq C |B_j|.$$

As
$$B_{R}\cap
\mathcal{N}_{\delta}(\partial\Omega_{\lambda})\subset \bigcup
B_j$$ we have
\begin{align*}
|B_{R}\cap \mathcal{N}_{\delta}(\partial\Omega_{\lambda})|&\leq
\sum |B_j|\leq \frac{1}{C} \sum \int_{B_j} G(|\nabla v|)\, dx\\ &
\leq \frac{n_0}{C} \int_{\bigcup B_j} G(|\nabla v|)\, dx=
\frac{n_0}{C} \int_{\bigcup B_j\cap \{u^{\ep}>\lambda\}} G(|\nabla
u^{\ep} |)\, dx.
\end{align*}
On the other hand, if $x\in B_j$ then $u^{\ep}(x)< C_1 \delta$
where $C_1=c_2^{-1}+L$. Then, as $\bigcup B_j \subset
B_{R+\delta}$, we have
$$|B_{R} \cap
\mathcal{N}_{\delta}(\partial\Omega_{\lambda})|\leq \frac{n_0}{C}
\int_{\{\lambda < u^{\ep} < C_1 \delta \}\cap B_{R+\delta}}
G(|\nabla u^{\ep} |)\, dx.$$
\end{proof}

\medskip

\begin{proof}[Proof of Theorem \ref{nmenos1}] Using Lemmas \ref{gradgrad} and \ref{lemma7.5} we have
 $$|B_{R-\delta} \cap
\mathcal{N}_{\delta}(\partial\Omega_{\lambda})|\leq C_0
\int_{\{\lambda < u^{\ep} < C_1 \delta \}\cap B_{R}} G(|\nabla
u^{\ep} |)\, dx\leq C_0 c C_1 \delta R^{N-1}.$$

 As $|B_R\setminus
B_{R-\delta}|\leq C \delta R^{N-1}$ we obtain the conclusion of
Theorem \ref{nmenos1}.
\end{proof}

Now, we can pass to the limit as $\ep\to 0$.  There exists a
subsequence $u^{{\ep}_k}$ converging, as $\ep_k\to 0$, to   a
function $u_0\in W^{1,G}(\Omega)$ strongly in
$L^{\delta+1}(\Omega)$, weakly in $W^{1,G}(\Omega)$ and uniformly
in every compact subset of $\Omega$.

Let $\Omega'\subset \subset \Omega$, $x_0\in \Omega'\cap
\partial\{u_0>0\}$ and $\rho_0\leq 1/2 dist(\Omega',\partial\Omega)$. Then,
by using the previous results we can prove as in Theorem 1.16 in
\cite{CS} that $u_0$ is a local minimizer of
$$J_0(v):=\int_{B_{\rho}(x_0)} [G(|\nabla v|)+ M \chi_{\{v>0\}}] \, dx.$$

Finally we can apply the results of \cite{MW1} and conclude that
$\H$--almost every  point of the free boundary belongs to the
reduced free boundary. Moreover, by applying the regularity
results for minimizers of $J_0$ from \cite{MW1} (see \cite{M1} for
the regularity of the whole free boundary in dimension 2) we have
the following theorem
\begin{teo}\label{ejemplo2}
Suppose that $g$ satisfies \eqref{cond}. Let $u^{\ep_j}$ be a
local minimizer of  \eqref{funbep2}  in a domain
$\Omega\subset\mathbb{R}^N$ such that $u^{\ep_j}\rightarrow u$
uniformly in compact subsets of $\Omega$ and $\ep_j\to 0$. Then,
$\partial_{red}\{u>0\}$ is a $C^{1,\alpha}$ surface and
$\H(\partial\{u>0\}\setminus\partial_{red}\{u>0\})=0$. In
dimension 2, if there exist $t_0$ and $k$ such that $g(t)\le k\,
t$ for $t\le t_0$ there holds that
 the  whole free boundary is a regular surface.
\end{teo}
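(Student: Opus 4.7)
The plan is to combine the preparatory results from the preceding pages into a proof built on three ingredients: uniform nondegeneracy of $u$, identification of $u$ as a local minimizer of the Alt--Caffarelli type functional
$$J_0(v) := \int [G(|\nabla v|) + M\chi_{\{v>0\}}]\,dx,$$
and application of the regularity theory for such minimizers developed in \cite{MW1}. The hypothesis \eqref{condderivadagintro2} is not needed here because we bypass the free boundary problem \eqref{bernoulli2} via weak solutions and go through the minimizer framework of \cite{MW1} directly.

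First, the linear growth lemma for minimizers (proved just above the theorem, modeled on Corollary 1.7 of \cite{CS}) combined with Theorem \ref{nodegfinal} gives that each $u^{\ep_j}$ grows linearly out of $\partial\{u^{\ep_j}>\ep_j\}$ with constants independent of $\ep_j$ on compact subsets. Corollary \ref{coro-nondeg} then transfers this bound to the limit, so $u$ is uniformly nondegenerate at every compactly contained free boundary point.

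Second, following Theorem 1.16 of \cite{CS}, I would show that $u$ is a local minimizer of $J_0$ on every $B_\rho(x_0) \subset\subset \Omega$. The lower bound $J_0(u) \le \liminf_j J_{\ep_j}(u^{\ep_j})$ uses weak lower semicontinuity of the Orlicz energy in $W^{1,G}$ together with the fact that $B_{\ep_j}(u^{\ep_j}) \to M$ on $\{u>0\}$, while the level set measure estimate of Theorem \ref{nmenos1} controls the transition region $\{0 < u^{\ep_j} < c_1\ep_j\}$ and shows it has vanishing measure. For the upper bound, given a competitor $v$ with $v=u$ on $\partial B_\rho(x_0)$, one constructs admissible $v^{\ep_j}$ for $J_{\ep_j}$ by regularizing $v$ and lifting it above $\ep_j$ in a thin collar around $\partial\{v>0\}$, in such a way that $J_{\ep_j}(v^{\ep_j}) \to J_0(v)$; the minimality $J_{\ep_j}(u^{\ep_j}) \le J_{\ep_j}(v^{\ep_j})$ then closes the argument.

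Third, once $u$ is a local minimizer of $J_0$, the regularity theory of \cite{MW1} (which assumes only \eqref{cond}) gives that $\partial_{red}\{u>0\}$ is a $C^{1,\alpha}$ surface, and the standard density/blowup analysis for Alt--Caffarelli minimizers from \cite{MW1} yields $\H(\partial\{u>0\}\setminus\partial_{red}\{u>0\})=0$. In dimension two, under the extra condition $g(t)\le k\,t$ for $t\le t_0$, Corollary 2.2 of \cite{M1} upgrades this to full regularity of $\partial\{u>0\}$.

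The main obstacle lies in the second step, namely the construction of admissible competitors $v^{\ep_j}$: one must approximate an arbitrary $J_0$-competitor by functions lifted above $\ep_j$ near their zero sets without inflating either the Orlicz energy or the potential term in the limit. The level set measure estimate of Theorem \ref{nmenos1}, applied on both sides of the inequality, is the key quantitative input that makes this double passage to the limit succeed.
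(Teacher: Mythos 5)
Your proposal is correct and follows the same route as the paper: establish uniform linear growth/nondegeneracy for the minimizers $u^{\ep_j}$, use the level set measure estimate (Theorem~\ref{nmenos1}) to prove, following Theorem~1.16 of~\cite{CS}, that the limit $u$ is a local minimizer of the one-phase functional $J_0(v)=\int\bigl[G(|\nabla v|)+M\chi_{\{v>0\}}\bigr]\,dx$, and then invoke the regularity theory for $J_0$-minimizers from~\cite{MW1} for both the $C^{1,\alpha}$ regularity of the reduced boundary and the $\H$-negligibility of the singular set, with~\cite{M1} giving full regularity in dimension two under the extra condition on $g$. You also correctly observe that condition~\eqref{condderivadagintro2} is not needed here, since one goes directly through the minimizer framework of~\cite{MW1} rather than through the weak-solution asymptotic developments.
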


\appendix
\renewcommand{\theequation}{\Alph{section}.\arabic{equation}}

\section{Properties of $G$ 
}\label{appAA} \setcounter{equation}{0} The following result is
proved  in \cite{MW1}.
\begin{lema}\label{prop} The function $g$
satisfies the following properties,
\begin{enumerate}
\item[(g1)] $\di \min\{s^{\delta},s^{g_0}\} g(t)\leq g(st)\leq
\max\{s^{\delta},s^{g_0}\} g(t)$
\smallskip
 \item[(g2)] $G$ is convex and $C^2$\item[(g3)]
$\di\frac{t g(t)}{1+g_0}\leq G(t)\leq t g(t) \quad \forall\ t\geq
0.$

\end{enumerate}

\end{lema}

\bigskip

\section{A result on $\mathcal{L}$-solutions  with linear growth}\label{appA}
\setcounter{equation}{0}

In this section we will state some properties of
$\mathcal{L}$-subsolutions.

\begin{lema}\label{psub2} Let $0<r\le1$.
Let $u\in C(\overline{B_r^+})$ be such that $\L u\ge 0$  in
$B_r^+$ and $0\leq u\leq \alpha x_N$ in $B_r^+$, $u\leq \delta_0
\alpha x_N$ on $\partial B_r^+\cap B_{r_0}(\bar{x})$ with
$\bar{x}\in
\partial B^+_r$, $\bar{x}_N>0$ and $0<\delta_0<1$.

Then, there exists $0<\gamma<1$ and $0<\eta\leq 1$, depending only
on $r$ and $N$ such that $$u(x)\leq \gamma \alpha x_N\quad\mbox{in
}B_{\eta r}^+.$$

\end{lema}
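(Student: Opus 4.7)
The plan is to linearize the nonlinear operator $\L$ around the $\L$-harmonic function $\phi(x):=\alpha x_N$ and reduce the claim to the weak Harnack inequality for a linear elliptic PDE in divergence form. Since $\nabla\phi=\alpha e_N\ne 0$ is constant, $\phi$ is a classical $\L$-harmonic function, and Lieberman's interior gradient estimates applied to the subsolution $u$ (satisfying $0\le u\le \alpha r$) yield $|\nabla u|\le C\alpha$ in $B_{r/2}^+$ with $C=C(N,\delta,g_0)$. Setting $w:=\phi-u\ge 0$ and writing
\begin{equation*}
A(\nabla u)-A(\alpha e_N)=-\mathbf{B}(x)\nabla w,\qquad \mathbf{B}(x):=\int_0^1 DA\bigl(s\nabla u(x)+(1-s)\alpha e_N\bigr)\,ds,
\end{equation*}
the subsolution inequality $\L u\ge \L\phi=0$ becomes
\begin{equation*}
-\operatorname{div}\bigl(\mathbf{B}(x)\nabla w\bigr)\ge 0\quad\text{weakly in }B_{r/2}^+.
\end{equation*}
The first step is to verify that $\mathbf{B}$ is a bounded, uniformly elliptic matrix field with constants depending only on $N,\delta,g_0$: the upper bound $|\mathbf{B}(x)|\le C\,F(C\alpha)$ follows from $|a_{ij}(p)|\le C\,F(|p|)$ and the gradient bound; the lower ellipticity bound $\langle\mathbf{B}(x)\xi,\xi\rangle\ge c\,F(\alpha)|\xi|^2$ follows from $a_{ij}(p)\xi_i\xi_j\ge \delta F(|p|)|\xi|^2$ in \eqref{cond} together with the fact that, averaging in $s\in[0,1]$, the nonzero base vector $\alpha e_N$ dominates on an interval of fixed length, so that $\int_0^1 F(|s\nabla u+(1-s)\alpha e_N|)\,ds\ge c\,F(\alpha)$ (using property (g1) to compare $F$ at different scales). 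Dividing through by $F(\alpha)$ then puts us in the classical De Giorgi--Nash--Moser setting with constants depending only on $N,\delta,g_0$.

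With the linear supersolution framework in place, I would exploit the boundary information to get a pointwise lower bound for $w$ on a fixed interior ball and then propagate it. On $\partial B_r^+\cap B_{r_0}(\bar x)$ one has $w\ge (1-\delta_0)\alpha x_N\ge \tfrac12(1-\delta_0)\alpha\bar x_N$ on a boundary piece of positive surface measure. Sliding this condition slightly into the interior using the Lipschitz bound on $u$ produces a ball $B_\rho(x_0)\subset B_{r/2}^+$, with $\rho$ and $x_0$ depending only on $r,r_0,\bar x,N,\delta,g_0$, on which $w\ge c(1-\delta_0)\alpha\bar x_N$. The weak Harnack inequality for the nonnegative supersolution $w$ of the uniformly elliptic divergence-form equation then yields a uniform lower bound $w(x)\ge c_1\alpha$ on $\{x\in B_{\eta r}:x_N\ge \eta r/4\}$ for an appropriate $\eta=\eta(N,\delta,g_0,r,r_0,\bar x,\delta_0)\in(0,1)$.

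Finally, to upgrade this interior positivity to a linear bound up to the flat boundary, I would compare $w$ with a boundary barrier. The function $x_N$ itself belongs to the kernel of $\operatorname{div}(\mathbf{B}\nabla\cdot)$ only approximately, so instead I would use a standard De Giorgi-type barrier of the form $\psi(x):=c_1\alpha\bigl(\tfrac{x_N}{\eta r}-\kappa\tfrac{|x'|^2}{(\eta r)^2}\bigr)^+$ with $\kappa$ chosen small depending on $N,\delta,g_0$ so that $\psi$ is a subsolution of $\operatorname{div}(\mathbf{B}\nabla\cdot)=0$, vanishes on $\{x_N=0\}$, and is dominated by $w$ on the remainder of $\partial B_{\eta r}^+$ (using the weak Harnack bound above). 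The comparison principle for the linear operator then gives $w\ge\psi\ge c_2\alpha x_N$ in $B_{\eta r}^+$, so $u\le(1-c_2)\alpha x_N$ there, which is the claim with $\gamma:=1-c_2\in(0,1)$. The main obstacle is the verification of uniform ellipticity of $\mathbf{B}$ despite the possible degeneracy or singularity of $\L$ at points where $\nabla u$ vanishes; this is resolved by the observation above that the nonzero endpoint value $DA(\alpha e_N)$ contributes nondegenerately to the $s$-average, so that the effective ellipticity ratio depends only on $\delta,g_0,N$ after the normalization by $F(\alpha)$.
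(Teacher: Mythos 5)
Your reduction to a linear divergence-form inequality for $w=\alpha x_N-u$ is appealing, but it has two genuine gaps. First, the opening step is unjustified: the lemma only assumes $u\in C(\overline{B_r^+})$ with $\L u\ge 0$, and interior gradient estimates of Lieberman type hold for \emph{solutions}, not for subsolutions (a continuous subsolution need not even be Lipschitz, and in the paper's application $u$ solves $\L u=0$ only in $\{u>0\}$, so near its free boundary $|\nabla u|$ is controlled by the Lipschitz constant of $u$, not by $C\alpha$). Without $|\nabla u|\le C\alpha$ both your upper bound on $\mathbf{B}$ and the ``fixed-length interval where $\alpha e_N$ dominates'' argument for the lower bound collapse, so the uniform ellipticity of $\mathbf{B}$ after dividing by $F(\alpha)$ is not established. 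This is exactly why the paper does not linearize around $u$ at all: it replaces $u$, via the comparison principle, by the solution $\alpha\psi^{\alpha}$ of an auxiliary Dirichlet problem for the rescaled operator $\L_\alpha$ (with $g_\alpha(t)=g(\alpha t)$, which satisfies \eqref{cond} with the same $\delta,g_0$), and all regularity is harvested from $\psi^\alpha$, uniformly in $\alpha$.

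Second, and more seriously, your final step does not work with the coefficients you have. The matrix $\mathbf{B}(x)$ is only bounded and measurable, so the explicit quadratic barrier $\psi=c_1\alpha\bigl(x_N/(\eta r)-\kappa|x'|^2/(\eta r)^2\bigr)^+$ cannot be checked to be a subsolution of $\operatorname{div}(\mathbf{B}\nabla\cdot)=0$ — the expression $\operatorname{div}(\mathbf{B}\nabla\psi)$ is just a distribution with no sign for any choice of $\kappa$. More fundamentally, the conclusion you need at this stage is a Hopf-type \emph{linear} lower bound $w\ge c\,\alpha x_N$ up to $\{x_N=0\}$, and for divergence-form operators the boundary point (Hopf) principle fails in general when the coefficients are not Dini continuous; measurable coefficients only give a power-type lower bound via iterated weak Harnack, which is not enough here. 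The paper circumvents precisely this obstruction: in Steps~2 and~4 it proves, by compactness in $\alpha$ and barrier arguments, that $|\nabla\psi^{\alpha}|\ge\bar c>0$ in a fixed half-ball near $\{x_N=0\}$, so that $w^{\alpha}=x_N-\psi^{\alpha}$ solves there a uniformly elliptic equation whose coefficients $b^{\alpha}_{ij}(\nabla\psi^\alpha(x))$ are H\"older continuous (since $\psi^\alpha\in C^{1,\beta}$ up to the flat boundary, uniformly in $\alpha$), and only then applies Hopf's principle. Unless you supply an analogue of this nondegeneracy-and-regularity step, the passage from interior positivity of $w$ to the linear bound $u\le\gamma\alpha x_N$ is not justified.
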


\begin{proof}
By the invariance of the equation $\L u\ge 0$ under the rescaling
$\bar{u}(x)=u(rx)/r$ we can suppose that $r=1$.

Let $\psi^{\alpha}$ be a $\L_{\alpha}$-solution in $B_1^+$, with
smooth boundary data, such that
$$
\begin{cases}
\psi^{\alpha}=x_N & \mbox{on } \partial B_1^+\setminus B_{r_0}(\bar{x})\\
\delta_0 x_N\leq \psi^{\alpha} \leq x_N & \mbox{on } \partial
B_1^+\cap B_{r_0}(\bar{x})\\
\psi^{\alpha}=\delta_0 x_N & \mbox{on } \partial B_1^+\cap
B_{r_0/2}(\bar{x}),
\end{cases}
$$
where $\L_{\alpha} v=\mbox{div}
\Big(\displaystyle\frac{g_{\alpha}(|\nabla v|)}{|\nabla v|} \nabla
v\Big)$ and $g_{\alpha}(t)= g(\alpha t)$.

 Therefore, $\L(\alpha \psi^{\alpha})=0$ and, by the comparison
 principle (see \cite{MW1}),
 $u\leq \alpha \psi^{\alpha}$ in $B_1^+$.

 If we see
 that there exist $0<\gamma<1$ and $\eta>0$, independent of
$\alpha$, such that $\psi^{\alpha}\leq \gamma x_N$ in
$B_{\eta}^+$, the result follows.

First, observe that,
\begin{equation}\label{clasealpha}
\delta \leq \frac{g_{\alpha}'(t) t}{g_{\alpha}(t)}\leq g_0.
\end{equation}

Then, by the results in \cite{Li1}, for $0<\rho_0<1$ and some
$0<\beta<1$,
\begin{equation}\label{propphi}
\begin{aligned}
&\psi^{\alpha}\in C^{1,\beta}(\overline{B_{\rho_0}^+})\cap C^\beta
(\overline{B_{1}^+}).
\\& \mbox{The } C^{1,\beta} (\overline{B_{\rho_0}^+})\mbox{ and }C^\beta (\overline{B_{1}^+})\mbox{ norms are bounded by a
constant independent of } \alpha.\\
& \mbox{The constant of the Harnack inequality  is independent of
} \alpha.
\end{aligned}
\end{equation}

If $|\nabla \psi^{\alpha}|\geq \mu>0$ in some open set $U$, we
have  that $\psi^{\alpha}\in W^{2,2}(U)$ and $\psi^\alpha$ is a
solution of the linear uniformly elliptic equation,
\begin{equation}\label{lue}
\mathcal{T_{\alpha}}\psi=\sum_{i,j=1}^N b^{\alpha}_{ij} \psi_{x_i
x_j} =0 \quad \mbox{ in } U,
\end{equation}
where $$b_{ij}^\alpha=\delta_{ij}+\Big(\frac{g'_\alpha(|\nabla
\psi^\alpha|)|\nabla
\psi^\alpha|}{g_\alpha(|\nabla\psi^\alpha|)}-1\Big) \frac{D_i
\psi^\alpha D_j \psi^\alpha}{|\nabla \psi^\alpha|^2},$$ and the
constant of ellipticity depends only on $g_0$ and $\delta$.

%

Now, we divide the proof into several steps.

\bigskip

{\bf Step 1}

 Let $w^{\alpha}=x_N-\psi^{\alpha}$. Then, $w^{\alpha}\in
C^{1,\beta}(\overline{B_{\rho_0}^+})\cap C^\beta
(\overline{B_{1}^+})$ and it is a solution of
$\mathcal{T_{\alpha}}w^{\alpha} =0$ in any open set $U$ where
$|\nabla \psi^{\alpha}|\geq \mu>0$.

On the other hand, as $\psi^{\alpha}\leq x_N$ on $\partial B_1^+$
and both functions are $\L_{\alpha}$-solutions we have, by
comparison, that $\psi^{\alpha}\leq x_N$ in $B_1^+$. Therefore
$w^{\alpha}\geq 0$ in $B_1^+$.


\bigskip

{\bf Step 2}

Let us prove that there exist $\rho$, $\bar{c}$ and $\alpha_0$,
such that  $|\nabla\psi^{\alpha}|\geq \bar{c}$ in $B_{\rho}^+$ if
$0<\alpha\le\alpha_0$.

First, let us see that there exist $c>0$ and $\alpha_1$ such that
\begin{equation}\label{cotanoalpha} \psi^{\alpha}(1/2 e_N)\geq c\quad\mbox{if }0<\alpha\le\alpha_1.
\end{equation}

If not, there exists a sequence  $\alpha_k\to 0$ such that
$\psi^{\alpha_k}(1/2 e_N)\to 0$. Since the constant in Harnack
inequality is independent of $\alpha$ (see \eqref{propphi}), we
have that, $\psi^{\alpha_k}\to 0$ uniformly in compact sets of
$B_1^+$.

On the other hand, using that $\psi^{\alpha}$ are uniformly
bounded in $C^{\beta}(\overline{B_1^+})$, we have that there
exists $\psi\in C^1({B_1^+})\cap C^{\beta}(\overline{B_1^+})$ such
that, for a subsequence, $\psi^ {\alpha_k}\to \psi$ uniformly in
$\overline{B_1^+}$.

Therefore, $\psi=0$ in $\overline{B_1^+}$. But we have that
$\psi=\delta_0 x_N$ on $ B_{r_0/2}(\bar{x})\cap
\partial B_1^+$, which is a contradiction.

\medskip

Now, let $x_1\in \{x_N=0\}\cap B_{1/2}$. Take
$x_0=x_1+\frac{e_N}{4}$. By \eqref{propphi} we have that there
exists a constant $c_1$ independent of $\alpha$ such that,
$\psi^{\alpha}(x)\geq c_1 \psi^{\alpha}(1/2 e_N)$ for any $x\in
\partial B_{1/8}(x_0)$  and therefore, by \eqref{cotanoalpha},
$\psi^{\alpha}\geq \tilde{c}$ on $\partial B_{1/8}(x_0)$.

Take $v=\ep (e^{-\lambda|x-x_0|^2}-e^{-\lambda /16})$, and choose
$\lambda$ such that $\L_\alpha v>0$ in $B_{1/4}(x_0)\setminus
B_{1/8}(x_0)$ and $\ep$ such that $v=\tilde{c}$ on $\partial
B_{1/8}(x_0)$ (observe that, by Lemma 2.9 in \cite{MW1} $\lambda$
and $\ep$ can be chosen independent of $\alpha$).

Since $\psi^{\alpha}\geq 0=v$ on $\partial B_{1/4}(x_0)$ and
$\psi^{\alpha}\geq v$ on $\partial B_{1/8}(x_0)$  we have, by
comparison, that $\psi^{\alpha}\geq v$ in $B_{1/4}(x_0)\setminus
B_{1/8}(x_0)$.

On the other hand $v_{x_N}(x_1)=\ep 2 \lambda (x_0-x_1)_N
e^{-\lambda|x_1-x_0|^2}=\frac{\lambda\ep} 2 e^{-\lambda/16}
=\bar{c}$, and therefore $ \psi^{\alpha}_{x_N}(x_1)\geq \bar{c}$.

As  $\nabla\psi^{\alpha}$ are uniformly H\"older in
$\overline{B_{3/4}^+}$, we have that there exists  $\rho$
independent of $\alpha$ and $x_1$ such that $
\psi^{\alpha}_{x_N}(x)\geq \bar{c}$ in $B_{\rho}^+(x_1)$.

\bigskip

{\bf Step 3}

 Since $|\nabla \psi^{\alpha}|\geq \bar{c}$ in $B_{\rho}^+$, we have that,
  $\mathcal{T}_{\alpha}w^{\alpha}= 0$ there.

  Suppose that
$w^{\alpha}(1/2 \rho e_N)\geq \tilde{c}$, with $\tilde{c}$
independent of $\alpha$. Then, by  Hopf's Principle we have that
there exists $\sigma_1$ depending only on $N$ and the ellipticity
of ${\mathcal T}_\alpha$ such that $w^{\alpha}\geq \sigma_1 x_N$
in $B_{\rho/2}^+$.
  Then, taking
$\gamma=1- \sigma_1$ we obtain the desired result.

\bigskip

{\bf Step 4}

Finally, let as see that the assumption in  Step 3 is satisfied.
This is, let us see that $w^{\alpha}(1/2 \rho e_N )\geq
\tilde{c}>0$ where $\tilde{c}$ is independent of $\alpha$.

Suppose, by contradiction that for a subsequence,
$w^{\alpha_k}(1/2\rho e_N )\to 0$. We know that in $B_{\rho}^+$
$\mathcal{T}_{\alpha} w^{\alpha}=0$. Therefore, applying Harnack
inequality we have  that  $w^{\alpha_k}\to 0$ in $B_\rho^+$.

 On the other hand, since $\psi^{\alpha}\to \psi$ and $\nabla\psi^\alpha\to\nabla\psi$ uniformly in $\overline{B_{\rho_0}^+}$ for
 every $0<\rho_0<1$ there holds that
  $w^{\alpha_k}\to w=x_n-\psi$ in $C^1(\overline{B_{\rho_0}^+})$. Let
$$
\mathcal{A}=\{x\in B_1^+\ /\ {w}=0\},
$$
and suppose that, there exist a point $x_1\in \partial A\cap
B_1^+$. Then, as $w^{\alpha}\geq 0$ we have that ${w}$ attains its
minimum at this point. Therefore $\nabla {w}(x_1)=0$.

Since $\nabla w^{\alpha_k}\to \nabla {w}$ uniformly in a
neighborhood of $x_1$, we have that for some $\tau>0$ independent
of $\alpha_k$, $|\nabla \psi^{\alpha_k}|\geq 1/2$ in
$B_{\tau}(x_1)$. Thus, in this ball,
 $w^{\alpha_k}$ satisfies $\mathcal{T}_{\alpha_k}w^{\alpha_k}=0$.

Now, applying Harnack inequality in $B_{\tau}(x_1)$ and then,
passing to the limit  we obtain that ${w}=0$ in $B_{\tau/2}(x_1)$,
which is a contradiction.

Hence, ${w}=0$ in $\overline{B_1^+}$. But, on the other hand, we
have ${w}=x_N-\delta_0 x_N>0$ on $\partial B_1\cap
B_{r_0/2}(\bar{x})$, which is a contradiction.

%

%
%
\end{proof}

With Lemma \ref{psub2} we can also prove the asymptotic
development of $\L-$ solutions.

\begin{lema}\label{development11}
Let $u$ be  Lipschitz continuous in $\overline{B_1^+}$, $u\geq 0$
in $B_1^+$, $\L u=0$ in $\{u>0\}$ and $u=0$ on $\{x_N=0\}$. Then,
in $B_1^+$ $u$ has the asymptotic development
$$
u(x)=\alpha x_N+ o(|x|),
$$
with $\alpha\ge 0$.
\end{lema}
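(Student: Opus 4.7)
I will track the boundary slope $u(x)/x_N$ at small scales and use Lemma \ref{psub2} to force it to converge. Define
$$
\alpha_r := \sup_{B_r^+} \frac{u(x)}{x_N},\qquad 0<r<1.
$$
Since $u$ is $L$-Lipschitz and vanishes on $\{x_N=0\}$, we have $\alpha_r\le L$, and clearly $\alpha_r$ is non-decreasing in $r$. Let $\alpha := \lim_{r\to 0^+}\alpha_r\ge 0$; I claim $u(x)=\alpha x_N+o(|x|)$.

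The upper bound is immediate: for $x$ with $|x|<1/2$, taking $r=2|x|$ gives $u(x)\le\alpha_{2|x|}x_N\le\alpha_{2|x|}|x|$, hence $u(x)-\alpha x_N\le(\alpha_{2|x|}-\alpha)|x|=o(|x|)$ as $x\to 0$.

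For the lower bound I argue by contradiction: suppose there exist $c>0$ and $x_k\to 0$ with $u(x_k)\le\alpha(x_k)_N-c|x_k|$. Since $u\ge 0$ this forces $\alpha>0$ and $(x_k)_N\ge (c/\alpha)|x_k|$. Rescale $\bar u_k(y):=u(r_k y)/r_k$ with $r_k=|x_k|$. By scale-invariance $\L\bar u_k=0$ in $\{\bar u_k>0\}$, each $\bar u_k$ is $L$-Lipschitz, $0\le\bar u_k\le\alpha_{r_k}\,y_N$ in $B_1^+$ with $\alpha_{r_k}\to\alpha$, and the rescaled bad point $\bar x_k:=x_k/r_k\in\partial B_1$ satisfies $(\bar x_k)_N\ge c/\alpha$ and $\bar u_k(\bar x_k)\le(\alpha-c)(\bar x_k)_N$. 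By the Lipschitz bound, on a neighbourhood $\partial B_1^+\cap B_{r_0}(\bar x_k)$ with $r_0>0$ uniform in $k$ one obtains $\bar u_k\le\delta_0\,\alpha_{r_k}\,y_N$ with $\delta_0:=1-c/(2L)<1$ uniform in $k$ (using $\alpha_{r_k}\ge\alpha$ and that $\alpha_{r_k}$ is close to $\alpha$ for $k$ large). Passing to a subsequence so that $\bar x_k\to\bar x_\infty\in\partial B_1\cap\{y_N\ge c/\alpha\}$, Lemma \ref{psub2} applied to $\bar u_k$ on $B_1^+$ yields $\bar u_k\le\gamma\,\alpha_{r_k}\,y_N$ in $B_\eta^+$ with $\gamma<1$ and $\eta\le 1$ uniform in $k$. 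Unscaling gives $\alpha_{\eta r_k}\le\gamma\,\alpha_{r_k}$. Since $\alpha_{\eta r_k}\ge\alpha$, letting $k\to\infty$ yields $\alpha\le\gamma\alpha$, which contradicts $\gamma<1$ and $\alpha>0$.

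The main technical obstacle is verifying the hypothesis $\L u\ge 0$ in $B_1^+$ required by Lemma \ref{psub2}, since only $\L u=0$ on $\{u>0\}$ is given. This follows from the standard cutoff argument: for a nonnegative test function $\varphi\in C^\infty_0(B_1^+)$, test the equation in $\{u>0\}$ against $\varphi\,\psi_\epsilon(u)$, where $\psi_\epsilon$ is smooth, vanishes on $[0,\epsilon]$, and equals $1$ on $[2\epsilon,\infty)$; discarding the nonnegative term $\varphi A(\nabla u)\!\cdot\!\nabla u\,\psi_\epsilon'(u)$ and sending $\epsilon\to 0$ gives the subsolution inequality. The scale identity $\L\bar u_k(y)=r_k\,\L u(r_k y)$ transfers this property to each $\bar u_k$. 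Uniformity of $\delta_0,\,r_0,\,\gamma,\,\eta$ in $k$ is guaranteed by the uniform lower bound $(\bar x_k)_N\ge c/\alpha$ and the subsequential convergence $\bar x_k\to\bar x_\infty$.
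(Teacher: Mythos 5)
Your proof is correct and follows essentially the same approach as the paper: define the boundary slope at scale $r$, take $\alpha$ as its limit at the origin, and argue by contradiction via a blow-up and Lemma~\ref{psub2}. Your version is slightly more careful in a few places — explicit uniformity of $\delta_0$, $r_0$ through the Lipschitz bound, avoidance of the (inessential) blow-up limit $u_0$, and the cutoff verification that $\L u\ge 0$ in $B_1^+$, which the paper asserts without comment — but the underlying argument is the same.
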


\begin{proof}
Let
$$
\alpha_j=\inf\{l\ /\ u\leq l x_n \mbox{ in } B_{2^{-j}}^+\}.
$$
Let $\alpha=\lim_{j\to\infty} \alpha_j$.

Given $\ep_0>0$ there exists $j_0$ such that for $j\geq j_0$ we
have $\alpha_j\leq \alpha+\ep_0$. From here, we have $u(x)\leq
(\alpha+\ep_0)x_N$ in $B_{2^{-j}}^+$ so that
$$
u(x)\leq \alpha x_N+o(|x|) \mbox{ in }B_1^+.
$$

Since $u\ge0$, if $\alpha=0$ the result follows. So, let us assume
that $\alpha>0$.

Suppose that $u(x)\neq \alpha x_N+o(|x|)$. Then there exists
$x_k\rightarrow 0$ and $\bar{\delta}>0$ such that
$$
u(x_k)\leq \alpha x_{k,N}-\bar{\delta} |x_k|.
$$
Let $r_k=|x_k|$ and $u_k(x)= r_k^{-1}u(r_k x)$. Then, there exists
$u_0$ such that, for a subsequence that we still call $u_k$,
$u_k\rightarrow u_0$ uniformly in $\overline{B_1^+}$ and
\begin{align*}
& u_k(\bar{x}_k)\leq \alpha \bar{x}_{k,N}-\bar{\delta}\\
& u_k(x)\leq (\alpha+\ep_0) x_N \mbox{ in } B_1^+,
\end{align*}
where $\bar{x}_k=\frac{x_k}{r_k}$, and we can assume that $\bar
x_k\rightarrow x_0$.

In fact, $u(x)\leq (\alpha+\ep_0) x_N$ in $B_{2^{-j_0}}^+$,
therefore $u_k(x)\leq (\alpha+\ep_0) x_N$ in $B_{r_k^{-1}
2^{-j_0}}^+$, and the claim follows if $k$ is big enough so that
$r^{-1}_k 2^{-j_0}\geq 1$.

If we take $\bar{\alpha}=\alpha+\ep_0$ we have
$$
\begin{cases}
\L u_k\geq 0 & \mbox{in }B_1^+\\
u_k=0 & \mbox{on } \{x_N=0\}\\
0\leq u_k \leq \bar{\alpha} x_N & \mbox{on } \partial
B_1^+\\
u_k\leq \delta_0 \bar{\alpha} x_N & \mbox{on } \partial B_1^+\cap
B_{\bar r}(\bar{x}),
\end{cases}
$$
for some $\bar{x} \in \partial B_1^+$, $\bar{x}_N>0$ and some
small $\bar r>0$.

In fact, as the $u_k$'s are continuous with uniform modulus of
continuity, we have
$$
u_k(x_0)\leq \alpha x_{0,N}-\frac{\bar{\delta}}{2}, \mbox{ if }
k\geq \bar{k}.
$$
Moreover there exists $r_0>0$ such that $u_k(x)\leq \alpha
x_N-\frac{\bar{\delta}}{4}$ in $B_{4r_0} (x_0)$. If $x_{0,N}>0$ we
take $\bar{x}=x_0$, if not, we take $\bar{x}\in \partial
B_{3r_0}(x_0)\cap \partial B_1$. Then, $\bar{x}_N>0$ and
$$
u_k(x)\leq \alpha x_N-\frac{\bar{\delta}}{4} \mbox{ in }
B_{r_0}(\bar{x})\subset\subset \{x_N>0\}.
$$
Moreover, there exists $0<\delta_0<1$ such that $\alpha
x_N-\frac{\bar{\delta}}{4}\leq \delta_0 \alpha x_N \leq \delta_0
\bar{\alpha} x_N$ in $B_{ r_0}(\bar{x})$, and the claim follows.

Now, by Lemma \ref{psub2}, there exists $0<\gamma<1$, $\eta>0$
independent of $\ep_0$ and $k$, such that $u_k(x)\leq \gamma
(\alpha+\ep_0) x_N$ in $B_{\eta}^+$. As $\gamma$ and $\eta$ are
independent of $k$ and $\ep_0$, taking $\ep_0\rightarrow 0$, we
have
$$
u_k(x)\leq \gamma \alpha x_N \mbox{ in }B_{\eta}^+.
$$
So that,
$$
u(x)\le \gamma\alpha x_N\mbox{ in }B_{r_k\eta}^+.
$$
Now if $j$ is big enough we have $\gamma \alpha <\alpha_j$ and
$2^{-j}\le r_k\eta$.  But this contradicts the definition of
$\alpha_j$. Therefore,
$$
u(x)=\alpha x_N+o(|x|),
$$
as we wanted to prove.
\end{proof}

\subsection{Example 1}

We  consider  solutions of \eqref{ecsing} that are local
minimizers of the functional \eqref{funbep} i.e: For any
$\Omega'\subset\subset\Omega$,
 $u^{\ep}$ minimizes
 $$\int_{\Omega'} [G(|\nabla v|) + B_{\ep}(v)] \, dx$$ in
$u^{\ep}+W^{1,G}(\Omega')$.

In order to prove the nondegeneracy we will need  a linear growth
result. We will use the following notation: for any $\lambda>0$,
$\Omega_{\lambda}=\{u^{\ep}>\lambda\}$. The proof follows the
lines of Corollary 1.7 in \cite{CS}.
\begin{lema}
Given $c_1>1$ there exist constants $C_1$ and $C_2$ such that if
$u^{\ep}$ is a local minimizer of $J_{\ep}$ in $B_1$ and
$u^{\ep}(x)>c_1\ep$, $x_0\in B_{1/4}$,
$$C_1 dist(x_0,\{u^{\ep}\leq \ep\})\leq u^{\ep}(x_0)\leq C_2
dist(x_0, \{u^{\ep}\leq \ep\})$$ if $dist(x_0,\{u^{\ep}\leq
\ep\})\leq 1/4$.
\end{lema}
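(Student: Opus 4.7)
\emph{Plan.} The lower inequality $u^{\ep}(x_0)\ge C_1\,\mathrm{dist}(x_0,\{u^{\ep}\le\ep\})$ is exactly the content of the preceding lemma in this subsection, so I focus on the upper bound and plan to extract it from the uniform interior Lipschitz estimate for solutions of $(P_\ep)$ combined with the boundary value $u^{\ep}=\ep$ on $\partial\{u^{\ep}>\ep\}$.

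First, since $u^{\ep}$ is a local minimizer of $J_{\ep}$ and $B_{\ep}\in C^1(\R)$ with $B_{\ep}'=\beta_{\ep}$, the first variation at $u^{\ep}$ yields \eqref{ecsin}, so $u^{\ep}$ is a bounded nonnegative solution of $(P_{\ep})$ in $B_1$ in the sense of the paper. Corollary \ref{gradsing} then provides a Lipschitz constant $L$ on $B_{1/2}$ that depends only on $N,\delta,g_0,g(1),\|\beta\|_\infty$ and $\|u^{\ep}\|_\infty$, but is independent of $\ep$.

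Next, set $r:=\mathrm{dist}(x_0,\{u^{\ep}\le\ep\})\le 1/4$ and choose $y_0\in\partial B_r(x_0)\cap\partial\{u^{\ep}>\ep\}\subset B_{1/2}$; continuity of $u^{\ep}$ forces $u^{\ep}(y_0)=\ep$. Integrating the Lipschitz bound along the segment from $y_0$ to $x_0$ gives
\[
u^{\ep}(x_0)\le u^{\ep}(y_0)+L\,|x_0-y_0|=\ep+L\,r.
\]
Using the hypothesis $u^{\ep}(x_0)>c_1\ep$ we deduce $\ep<L\,r/(c_1-1)$, and inserting this back,
\[
u^{\ep}(x_0)\le\ep+L\,r\le\Bigl(\frac{L}{c_1-1}+L\Bigr)r=:C_2\,r,
\]
which is the desired upper bound, with $C_2=L\,c_1/(c_1-1)$.

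The only delicate point in this strategy is the first step: verifying that a local minimizer of $J_{\ep}$ satisfies the Euler--Lagrange equation with enough regularity to feed into Corollary \ref{gradsing}. This is a routine first-variation computation (using that $B_{\ep}\in C^1$ and that $u^{\ep}+t\varphi$ is admissible for small $|t|$ and $\varphi\in C_0^{\infty}(B_1)$), but it is the one place where minimality---rather than just the PDE---is invoked. After that, everything is a quantitative combination of the uniform gradient bound, the identity $u^{\ep}(y_0)=\ep$ at the nearest boundary point, and the size assumption $u^{\ep}(x_0)>c_1\ep$.
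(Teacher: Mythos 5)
Your upper bound is fine: deriving the Euler--Lagrange equation \eqref{ecsin} from local minimality (using $B_\ep\in C^1$), invoking the uniform gradient bound of Corollary \ref{gradsing}, picking a nearest point $y_0$ with $u^{\ep}(y_0)=\ep$, and absorbing the $\ep$ via $u^{\ep}(x_0)>c_1\ep$ gives $u^{\ep}(x_0)\le \frac{c_1}{c_1-1}L\,\mathrm{dist}(x_0,\{u^{\ep}\le\ep\})$, which is exactly how this half is handled in the reference the paper points to (Caffarelli--Salsa).

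The genuine gap is the lower bound $u^{\ep}(x_0)\ge C_1\,\mathrm{dist}(x_0,\{u^{\ep}\le\ep\})$, which you dismiss as ``the content of the preceding lemma in this subsection.'' There is no such preceding lemma here: this lemma \emph{is} the paper's linear-growth statement for minimizers, and the paper's own proof consists precisely of the reference to Theorem 1.6/Corollary 1.7 in \cite{CS}, where the lower bound is the substantive part. Moreover, your closing remark that minimality is invoked only to obtain the Euler--Lagrange equation is exactly backwards: the lower linear growth does not follow from the equation $\L u^{\ep}=\beta_{\ep}(u^{\ep})$ together with the Lipschitz estimate (general solutions of $(P_\ep)$ need not be nondegenerate --- this is the very reason Section 7 restricts to minimal solutions and minimizers). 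The missing argument uses minimality essentially: assuming $u^{\ep}(x_0)$ is small compared with $d=\mathrm{dist}(x_0,\{u^{\ep}\le\ep\})$, one exploits $\L u^{\ep}=0$ and Harnack in $B_d(x_0)$ to make $u^{\ep}$ uniformly small there, builds a competitor by truncating $u^{\ep}$ in an interior ball, and compares energies $J_\ep(u^{\ep})\le J_\ep(v)$, the gain coming from the term $B_\ep$; this is the scheme of \cite{CS}, Theorem 1.6, adapted to the operator $\L$ and the Orlicz energy $G$. Without some version of this energy-comparison step your proof establishes only the easy half of the statement.
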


Now we can prove the uniform nondegeneracy condition. To this end,
first we have to prove a lemma. For the proof we have to make
different approach of the one in Lemma 1.10 in \cite{CS} since for
our operator we don't have mean value property as in the case of
the Laplacian. Instead, we have to use a blow up argument.
\begin{lema}\label{itera1}
Given $c_1>1$, if $u^{\ep}\in C(\Omega)$, $|\nabla u^{\ep}|\leq
L$,
 $u^{\ep}(x)\geq C dist(x,\partial \{u^{\ep}>\ep\})$ and
$d(x)=dist(x,\partial \{u^{\ep}>\ep\})<1/2 dist(x,\partial
\Omega)$ then there exists $\delta_0>0$ such that $\forall \ep>0$
and $\forall x\in \{u^{\ep}>c_1\ep\}$ we have
$$\sup_{B_d(x)(x)} u^{\ep} \geq (1+\delta_0) u^{\ep}(x) .$$
\end{lema}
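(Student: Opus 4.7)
My plan is to argue by contradiction, using a scaling and blow-up strategy that exploits the fact that the Lieberman condition \eqref{cond} is preserved under rescaling (so that Harnack constants are uniform across the sequence). Assume the conclusion fails. Then there exist sequences $\delta_k\to 0$, $\ep_k>0$ and points $x_k\in\{u^{\ep_k}>c_1\ep_k\}$ with $d_k:=d(x_k)<\tfrac12\,\mathrm{dist}(x_k,\partial\Omega)$ such that
$$
\sup_{B_{d_k}(x_k)}u^{\ep_k}\le (1+\delta_k)\,u^{\ep_k}(x_k).
$$
Since $B_{d_k}(x_k)\subset\{u^{\ep_k}>\ep_k\}$, we have $\L u^{\ep_k}=0$ in $B_{d_k}(x_k)$, and by hypothesis $u^{\ep_k}(x_k)\ge C d_k$, so the quotient $u^{\ep_k}(x_k)/d_k$ is bounded below by $C>0$.

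Next I would rescale: set $w_k(x):=u^{\ep_k}(x_k+d_k x)/u^{\ep_k}(x_k)$. A direct computation shows $w_k$ satisfies $\mathcal{L}_k w_k=0$ in $B_1$ where $\mathcal{L}_k v=\mathrm{div}\bigl(g_k(|\nabla v|)\,\nabla v/|\nabla v|\bigr)$ with $g_k(t)=g\bigl(u^{\ep_k}(x_k)\,t/d_k\bigr)$. The key point is that $g_k$ also satisfies \eqref{cond} with the same constants $\delta,g_0$, so all results of Lieberman (Harnack inequality, $C^{1,\alpha}$ estimates) apply to $w_k$ with constants independent of $k$. Moreover $w_k(0)=1$, $0<w_k\le 1+\delta_k$ in $\overline{B_1}$, and the gradient bound $|\nabla u^{\ep_k}|\le L$ together with $u^{\ep_k}(x_k)\ge Cd_k$ gives the uniform estimate $\|\nabla w_k\|_{L^\infty(B_1)}\le L/C$. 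By Arzelà--Ascoli (up to a subsequence) $w_k\to \overline{w}$ uniformly in $\overline{B_1}$.

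Now apply the uniform Harnack inequality to the nonnegative function $v_k:=(1+\delta_k)-w_k$, which also solves $\mathcal{L}_k v_k=0$ (the operator acts the same way after a sign change on the gradient direction; alternatively apply Harnack directly to $w_k$ combined with the fact that $w_k$ is close to its maximum). Specifically, for $0<r<1$,
$$
0\le v_k(x)\le c(r)\,v_k(0)=c(r)\,\delta_k,\qquad |x|\le r.
$$
Letting $k\to\infty$ yields $\overline{w}\equiv 1$ in $B_1$. Finally, to reach a contradiction, pick $y_k\in\partial\{u^{\ep_k}>\ep_k\}$ realizing $|x_k-y_k|=d_k$ and set $z_k:=(y_k-x_k)/d_k\in\partial B_1$. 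By continuity $u^{\ep_k}(y_k)=\ep_k$, so
$$
w_k(z_k)=\frac{\ep_k}{u^{\ep_k}(x_k)}<\frac{1}{c_1}.
$$
After extracting a subsequence so that $z_k\to\overline{z}\in\partial B_1$, uniform convergence gives $\overline{w}(\overline{z})\le 1/c_1<1$, contradicting $\overline{w}\equiv 1$.

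The main obstacle, and the reason a pure PDE argument without blow-up does not seem to work directly, is that the operator $\L$ has no mean-value property and is not homogeneous; the saving grace is that the rescaled operators $\mathcal{L}_k$ all lie in the \emph{same} Lieberman class \eqref{cond}, so Harnack, $C^{1,\alpha}$ regularity, and the comparison principle hold with constants that do not depend on $k$ or on $\ep_k,d_k$. Verifying this scale-invariance of the class and checking that the rescaled $g_k$ really satisfies \eqref{cond} with the original constants is the crucial technical step; once that is in place, the Harnack-plus-boundary-point argument delivers the contradiction cleanly.
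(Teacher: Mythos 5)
Your proposal is correct and follows essentially the same argument as the paper: the same contradiction setup, the same rescaling $w_k(x)=u^{\ep_k}(x_k+d_k x)/u^{\ep_k}(x_k)$ with the rescaled operator $\mathcal{L}_k$ lying in the same Lieberman class, the uniform Lipschitz bound $L/C$, Harnack applied to $(1+\delta_k)-w_k$ to conclude $\overline{w}\equiv 1$, and the contradiction at the boundary point $z_k=(y_k-x_k)/d_k$ where $w_k(z_k)\le 1/c_1$. No further comments are needed.
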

\begin{proof}
Suppose by contradiction that there exists a sequence $\delta_k\to
0$,  $\ep_k>0$ and $x_k\in \{u^{\ep_k}>c_1 \ep_k\}$ with
$$\max_{\partial B_{d_k}(x_k)} u^{\ep_k}\leq (1+\delta_k)
u^{\ep_k}(x).$$ Take $w_k(x)=\frac{u^{\ep_k}(x_k+d_k
x)}{u^{\ep_k}(x_k)}$, then $w_k(0)=1$ and
$$\max_{\partial B_1} w_k\leq (1+\delta_k), \quad w_k>0\quad
\mbox{ and }\mathcal{L}_k w_k=0 $$ where $\mathcal{L}_k v=
\mbox{div}\Big(\frac{g_k(|\nabla v|)}{|\nabla v|} \nabla v \Big)$
with $g_k(t)=g\Big(\frac{u^{\ep_k}(x_k) t}{d_k}\Big)$. Therefore,
$$\max_{\overline{B}_1} w_k\leq (1+\delta_k).$$
On the other hand, in $B_2$ we have
$$\|\nabla w_k\|_{L^{\infty}(B_2)}=\|\nabla u^{\ep_k}(x_k+d_k
x)\|_{L^{\infty}(B_2)}\frac{d_k}{u^{\ep_k}(x_k)}\leq
\frac{L}{C}.$$ Let $y_k\in \partial \Omega_{\ep_k}$ with
$|x_k-y_k|=d_k$ then we  have for $x\in B_1$,
$$w_k(x)=\frac{u^{\ep_k}(x_k+x
d_k)}{u^{\ep_k}(x_k)}\leq
\frac{1}{u^{\ep_k}(x_k)}[u^{\ep_k}(y_k)+L 2 d_k]\leq
\frac{1}{c_1}+\frac{2L}{C},$$ therefore
$$0\leq w_k\leq \frac{1}{c_1}+\frac{2L}{C}.$$
Then, there exists $\overline{w}\in C(\overline{B}_1)$ such that
$$w_k \to \overline{w}\quad \mbox{uniformly in } \overline{B}_1.$$
Take $0<r<1$ ant let $v_k(x)=(1+\delta_k)-w_k(x)$ then by Harnack
inequality we have
$$0\leq v_k(x)\leq c(r) v_k(0)\quad \mbox{ for } |x|<r$$
passing to the limit we have
$$0\leq 1-\overline{w}\leq c(r) (1-\overline{w}(0))=0.$$
Therefore $\overline{w}=1$ in $B_1$. On the other hand, if
$z_k=\frac{y_k-x_k}{d_k}$ we have,
$$w_k(z_k)=\frac{\ep_k}{u^{\ep_k}(x_k)}\leq \frac{1}{c_1}$$ and
$z_k\to \bar{z} \in \partial B_1$ then $\overline{w}(\bar{z})\leq
\frac{1}{c_1}<1$ and this is a contradiction since
$\overline{w}\in C(\overline{B}_1)$.

\end{proof}

\begin{teo} Given $c_1>1$, and let $\Omega'\subset\subset \Omega$. Then
there exist $c_0, r_0>0$ such that if $u^{\ep}$ is a local
minimizer of $J_{\ep}$ in $\Omega$, $x_0\in \Omega'\cap
\{u^{\ep}>c_1\ep\}$ and $dist(x_0,\partial \{u^{\ep}>\ep\})<
\frac{r_0}{8}$ we have
$$\sup_{B_r(x_0)} u^{\ep} \geq c_0 r \quad \mbox{ if } 0<r<r_0.$$
\end{teo}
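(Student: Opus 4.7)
The plan is to deduce this statement from Theorem \ref{nodegfinal}, which is the abstract nondegeneracy result proved earlier in the section. What one needs to verify is that every local minimizer of $J_\ep$ falls within the scope of that theorem. Three hypotheses are required: (i) $\uep$ is continuous with $\L\uep=0$ in $\{\uep>\ep\}$; (ii) $\uep$ and $|\nabla\uep|$ are uniformly bounded on $\Omega'\subset\subset\Omega$; (iii) the linear growth $\uep(x)\geq C\,\mathrm{dist}(x,\partial\{\uep>\ep\})$ at points of $\{\uep>c_1\ep\}\cap\Omega'$ whose distance to $\partial\{\uep>\ep\}$ is controlled by the distance to $\partial\Omega'$.

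For item (i), the Euler--Lagrange equation for $J_\ep$ is $\L\uep=\beta_\ep(\uep)$, since $B_\ep'=\beta_\ep$; as $\beta_\ep$ vanishes outside $(0,\ep)$ this gives $\L\uep=0$ in $\{\uep>\ep\}$. Continuity is known. For item (ii), the $L^\infty$ bound can be taken as a standing assumption in our setting, and once $\uep$ solves $(P_\ep)$ the uniform gradient bound is provided by Corollary \ref{gradsing}. For item (iii), this is precisely the content of the linear growth lemma proved immediately before the statement: local minimizers of $J_\ep$ satisfy $\uep(x_0)\geq C_1\,\mathrm{dist}(x_0,\{\uep\leq\ep\})$ whenever $\uep(x_0)>c_1\ep$ and the distance is sufficiently small.

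With these three hypotheses verified, Theorem \ref{nodegfinal} applies verbatim and yields the constants $c_0,r_0$ with $\sup_{B_r(x_0)}\uep\geq c_0 r$ for $0<r<r_0$. For orientation, the underlying mechanism in Theorem \ref{nodegfinal} is an iteration based on Lemma \ref{itera1}: starting at $x_0$, one constructs $x_{k+1}\in \overline{B_{d(x_k)}(x_k)}$ with $\uep(x_{k+1})\geq(1+\delta_0)\uep(x_k)$, so that $\uep(x_k)\geq(1+\delta_0)^k\uep(x_0)$; the linear-growth bound $d(x_k)\leq \uep(x_k)/C$ combined with geometric growth gives $|x_k-x_0|\leq\frac{1+\delta_0}{C\delta_0}\uep(x_k)$. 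Since $\uep$ is bounded, the sequence must exit $B_r(x_0)$ at some step $K$, providing a point where $\uep(x_K)\geq c_0 r$ for a suitable $c_0$ depending on $C,\delta_0$.

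The main obstacle in this reduction is really the verification of hypothesis (iii), which depends on the linear growth lemma that was the preceding statement; once that lemma is in place the argument is entirely a matter of citation. If one preferred a self-contained derivation, the delicate step would be the control of the displacement of the iterating sequence, since one must ensure that $x_k$ stays within the region where both Lemma \ref{itera1} and the linear growth apply; this is precisely where the estimate $d(x_k)\leq \uep(x_k)/C$ coupled with the telescoping sum $\sum_{j<k} d(x_j)$ bounded by a geometric series is used.
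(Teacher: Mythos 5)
Your proposal is correct and follows essentially the same route as the paper: the paper's own proof is the one-line citation "follows as in Theorem 1.9 in \cite{CS} using Lemma \ref{itera1} and the same iteration argument," which is exactly the reduction (linear growth lemma for minimizers $\Rightarrow$ hypotheses of the abstract nondegeneracy result $\Rightarrow$ iterate via Lemma \ref{itera1}) that you spell out. Your sketch of the iteration with the geometric growth of $u^\ep(x_k)$ and the displacement bound $|x_k-x_0|\lesssim u^\ep(x_k)/(C\delta_0)$ is an accurate account of the mechanism the paper invokes but does not reproduce.
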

\begin{proof}
The proof follows as in Theorem 1.9 in \cite{CS} using Lemma
\ref{itera1} and the same iteration argument of that Theorem.
\end{proof}

Escribir los otrs lemas

Now we want to prove the following,

\begin{teo}\label{nmenos1}
Given $c_1>1$ there exist $c_2,c_3>0$ such that if $\lambda\geq
c_1\ep$ and $1/4\geq \delta \geq c-2 \lambda$ then for $R<1/4$ we
have,
$$|\mathcal{N}_{\delta}(\partial \Omega)\cap B_R)|\leq c_3 \delta
R^{N-1}.$$
\end{teo}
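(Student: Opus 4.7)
The plan is to bound the neighborhood measure by an integral of $G(|\nabla u^\ep|)$ over a level set, and then to bound that integral directly using the PDE. This is the strategy of Alt-Caffarelli/Caffarelli-Salsa, adapted to the Orlicz setting.

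First I would cover $\mathcal{N}_\delta(\partial\Omega_\lambda)\cap B_R$ by a collection of balls $B_j = B_\delta(x_j)$ with centers $x_j \in \partial\Omega_\lambda \cap B_R$, with bounded (by $n_0(N)$) overlap. Inside each $B_j$, I want to exhibit \emph{two} subballs $B_j^1, B_j^2 \subset B_j$ of comparable radius $r_j = c\delta$: one where $v:=(u^\ep-\lambda)^+$ is large (of order $\delta$), the other where it is small (of order $\delta$). The small-ball part is immediate from the Lipschitz bound together with $u^\ep(x_j)=\lambda$: for $r_j$ a suitable fraction of $\delta$, $v\le Lr_j$ on $B_j^2=B_{r_j}(x_j)$. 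For the large-ball part I use the local uniform nondegeneracy (Theorem \ref{nodegfinal}, which applies since $\lambda\ge c_1\ep$ and $\delta\ge c_2\lambda$ with $c_2$ chosen large): picking $y_j\in\overline{B_{\delta/4}(x_j)}$ where $u^\ep$ attains its maximum on that smaller ball, $u^\ep(y_j)\ge c_0\delta/4$, so $v\ge c_0\delta/8$ on $B_j^1=B_{r_j}(y_j)$, after subtracting $\lambda\le c_2^{-1}\delta$ and the Lipschitz oscillation, provided $c_2$ is chosen large enough.

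Once these two subballs are in place, Poincaré inequality together with the convexity of $G$ gives, for the mean $m_j = \,\pint_{B_j}v$, that either on $B_j^1$ or $B_j^2$ the oscillation $|v-m_j|\ge c\delta$ on a set of measure comparable to $|B_j|$. Applying Poincaré and Jensen to $G$ yields
\begin{equation*}
\int_{B_j} G(|\nabla v|)\,dx \ge C\,|B_j|.
\end{equation*}
Summing over $j$ and using the bounded overlap of the covering, together with the fact that on each $B_j$ one has $\lambda\le u^\ep\le C_1\delta$ (where $C_1 = c_2^{-1}+L$), gives
\begin{equation*}
|\mathcal{N}_\delta(\partial\Omega_\lambda)\cap B_R| \le C_2 \int_{\{\lambda<u^\ep<C_1\delta\}\cap B_{R+\delta}} G(|\nabla u^\ep|)\,dx,
\end{equation*}
which is exactly the content of Lemma \ref{lemma7.5}.

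To conclude, I would apply Lemma \ref{gradgrad} on the ball $B_{R+\delta}$ (allowed since $R+\delta<1/2<3/4$), which bounds the right-hand side by $c\,C_1\delta(R+\delta)^{N-1}\le c'\delta R^{N-1}$. The proof of Lemma \ref{gradgrad} itself is a standard test-function argument: take $w=\min\{(u^\ep-\lambda)^+,\delta-\lambda\}$, which is supported in $\{u^\ep\ge\lambda\}\subset\{u^\ep>\ep\}$ where $\L u^\ep=0$, and integrate by parts, using $wF(|\nabla u^\ep|)\nabla u^\ep\cdot\nabla u^\ep \ge C\,w\,G(|\nabla u^\ep|)$ from property (g3) of Lemma \ref{prop}. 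The main technical obstacle is the derivation of the weak identity \eqref{derivada2} when $F$ is allowed to be degenerate or singular; this is handled by the same approximation scheme used in Lemma \ref{sale} (replacing $g$ by $g_n(t)=g(t)+t/n$ so that the perturbed solutions lie in $W^{2,2}$, then passing to the limit). Modulo this technical point everything is elementary: the nondegeneracy/upper-bound dichotomy giving the Poincaré gain on each $B_j$, and the test-function bound of the Dirichlet-type energy in thin level sets, combine to produce the desired $\delta R^{N-1}$ decay.
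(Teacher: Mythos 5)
Your argument is correct and follows the paper's own proof essentially verbatim: the same covering of $\mathcal{N}_\delta(\partial\Omega_\lambda)\cap B_R$ by $\delta$-balls with the two-subball dichotomy (nondegeneracy from Theorem \ref{nodegfinal} plus the Lipschitz bound), Poincar\'e and convexity of $G$ giving $\int_{B_j}G(|\nabla v|)\ge C|B_j|$ (this is Lemma \ref{lemma7.5}), and the same test function $w=\min\{(u^\ep-\lambda)^+,\delta-\lambda\}$ with the approximation scheme of Lemma \ref{sale} and property (g3) to get Lemma \ref{gradgrad}. The only cosmetic difference is that you apply Lemma \ref{gradgrad} on $B_{R+\delta}$ while the paper applies Lemma \ref{lemma7.5} at radius $R-\delta$ and adds the annulus $B_R\setminus B_{R-\delta}$; both versions reduce to the trivial estimate $|B_R|\le C\delta R^{N-1}$ when $\delta\ge R$, so the conclusion is the same.
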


%
%
%

\begin{lema}\label{gradgrad} If $\lambda>\ep$ and $R\leq 3/4$ then
$$\int_{\{\lambda<u^{\ep}<\delta\}\cap B_R} G(|\nabla u^{\ep}|)\,
dx \leq c \delta R^{N-1}.$$
\end{lema}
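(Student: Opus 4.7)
The approach is to use $w:=\min\{(\uep-\lambda)^+,\,\delta-\lambda\}$ as a test function. Its support lies inside $\{\uep\ge\lambda\}\subset\{\uep>\ep\}$ (using $\lambda>\ep$), where $\L\uep=0$, so a formal integration by parts kills the interior divergence and leaves only a boundary term on $\partial B_R$. The key observation is that $\nabla w=\nabla\uep$ exactly on $\{\lambda<\uep<\delta\}$, so the interior integral reproduces the quantity we want to bound, while the boundary integral is controlled by the uniform gradient estimate of Corollary~\ref{gradsing}.

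The first step is to establish the identity
\begin{equation*}
\int_{B_R} F(|\nabla\uep|)\,\nabla\uep\cdot\nabla w\,dx \;=\; \int_{\partial B_R} w\,F(|\nabla\uep|)\,\frac{\partial\uep}{\partial\nu}\,d\H
\end{equation*}
for every $w\in W^{1,G}(B_R)$ with $\mathrm{supp}\,w\subset\{\uep\ge\lambda\}$. Formally this is $0=\int_{B_R}\L\uep\cdot w\,dx$ integrated by parts. To justify it rigorously for the possibly singular or degenerate operator $\L$, I would mimic the approximation scheme of Lemma~\ref{sale}: set $g_n(t):=g(t)+t/n$, which satisfies \eqref{cond} with the uniform constants in \eqref{mismaclase}; solve $\L_n u_n=\beta_\ep(\uep)$ with boundary datum $\uep$ on an auxiliary domain containing $\overline{B_R}$; use the $W^{2,2}$ estimates of \cite{LU} to integrate by parts classically for $u_n$; and finally let $n\to\infty$, using the $C^{1,\alpha}$ estimates of \cite{Li1} (uniform in $n$) to identify the limit with $\uep$ and to pass to the limit in the boundary term on the smooth $\partial B_R$.

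Once the identity is available, plug in $w=\min\{(\uep-\lambda)^+,\,\delta-\lambda\}$. Then $0\le w\le\delta$, $\mathrm{supp}\,w\subset\{\uep\ge\lambda\}$, and $\nabla w=\nabla\uep\,\chi_{\{\lambda<\uep<\delta\}}$ a.e., so the identity reads
\begin{equation*}
\int_{\{\lambda<\uep<\delta\}\cap B_R} g(|\nabla\uep|)\,|\nabla\uep|\,dx \;=\; \int_{\partial B_R} w\,F(|\nabla\uep|)\,\frac{\partial\uep}{\partial\nu}\,d\H.
\end{equation*}
Property (g3) of Lemma~\ref{prop} gives $G(t)\le t\,g(t)$, so the left-hand side dominates $\int_{\{\lambda<\uep<\delta\}\cap B_R}G(|\nabla\uep|)\,dx$. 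For the right-hand side, Corollary~\ref{gradsing} provides a uniform bound $|\nabla\uep|\le L$ on $\overline{B_{3/4}}\supset\overline{B_R}$ (for $\ep$ small, absorbed into the constants), whence $F(|\nabla\uep|)\,|\partial\uep/\partial\nu|\le g(L)$ and the boundary integral is bounded by $g(L)\,\delta\,\H(\partial B_R)\le c\,\delta\,R^{N-1}$.

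The main technical obstacle is the first step: because $\L$ is neither linear nor uniformly elliptic, $\uep$ is not a priori in $W^{2,2}$, so one cannot integrate by parts directly. The regularization $g_n(t)=g(t)+t/n$ is tailored to circumvent this — it preserves \eqref{cond} with $n$-independent constants, so Lieberman's interior/up-to-smooth-boundary gradient estimates give enough compactness to pass to the limit in both the bulk and boundary integrals. Once this identity is granted, the remainder of the proof (choosing the truncation, applying (g3), invoking the uniform Lipschitz bound) is routine.
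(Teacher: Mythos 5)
Your proof is correct and follows essentially the same route as the paper: establish the integration-by-parts identity \eqref{derivada2} via the $g_n(t)=g(t)+t/n$ regularization of Lemma~\ref{sale}, plug in $w=\min\{(\uep-\lambda)^+,\delta-\lambda\}$, use that $\nabla w=\nabla\uep\,\chi_{\{\lambda<\uep<\delta\}}$ together with $G(t)\le t\,g(t)$ to recover the $G$-integral, and bound the boundary term by $0\le w\le\delta$ and the uniform gradient estimate. The only difference is that you spell out the intermediate equality $F(|\nabla\uep|)\nabla\uep\cdot\nabla w=g(|\nabla\uep|)|\nabla\uep|\chi_{\{\lambda<\uep<\delta\}}$ and the explicit use of (g3), which the paper absorbs into a constant; this is a welcome clarification, not a different argument.
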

\begin{proof}
First let as prove that  for all $w\in W^{1,G}(B_R)$ we have
$$ \int_{B_R} F(|\nabla
u^{\ep}|) \nabla  u^{\ep}\, dx = \int_{\partial B_R} w
\frac{\partial u^{\ep}}{\partial \nu}\, d\H$$

Let $w=\min\{(u^{\ep}-\lambda)^+,\delta-\lambda\}$ then $w\in
W^{1,G}(B_R)$ and
$$\int_{\{\lambda<u^{\ep}<\delta\} \cap B_R} G(|\nabla u^{\ep}|)\,
dx = \int_{\partial B_R} w \frac{\partial u^{\ep}}{\partial \nu}\,
d\H$$

\end{proof}

\begin{lema}
Given $c_1>1$ exist $C_1,C_2, c_2>0$ such that if $\lambda\leq c_1
\ep$, $\delta\geq c_2 \lambda$ and $\delta<1/8$ we have for
$R<1/4$ that
$$|\mathcal{N}_{\delta}(\partial \Omega_{\lambda})\cap
B_{R-\delta}|\leq C_2 \int_{\{\lambda<u^{\ep}<C_1 \delta \}\cap
B_{R+\delta}} G(|\nabla u^{\ep}|) \, dx. $$
\end{lema}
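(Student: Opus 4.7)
The plan is to adapt the Caffarelli--Salsa covering argument (carried out for the Laplacian in \cite{CS} and for $\mathcal L$ in Lemma \ref{lemma7.5}) to the present Orlicz setting, substituting the Poincar\'e--Jensen step for powers by one for the convex function $G$. Throughout I work under the natural condition $\lambda\ge c_1\ep$ so that points of $\overline{\Omega_\lambda}$ lie in $\{u^\ep>c_1\ep\}$ and the uniform nondegeneracy is available.

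First, I would Vitali-cover $\mathcal{N}_\delta(\partial\Omega_\lambda)\cap B_{R-\delta}$ by a family $\{B_j=B_\delta(x_j)\}$ with $x_j\in\partial\Omega_\lambda\cap B_{R-\delta}$, chosen so that each point of $\R^N$ lies in at most $n_0=n_0(N)$ of them. Note $\bigcup_j B_j\subset B_{R}\subset B_{R+\delta}$.

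Next, inside each $B_j$ I would exhibit two subballs $B_j^1,B_j^2$ of radius $r_j=c_0\delta/(16L)$ on which the truncation $v=(u^\ep-\lambda)^+$ takes values separated by a fixed fraction of $\delta$. The ball $B_j^2=B_{r_j}(x_j)$ satisfies $v\le Lr_j=(c_0/16)\delta$ by the Lipschitz bound $|\nabla u^\ep|\le L$ (Corollary \ref{gradsing}) together with $u^\ep(x_j)=\lambda$. For $B_j^1$, the hypothesis $\lambda\ge c_1\ep$ places the interior of $\Omega_\lambda$ inside $\{u^\ep>c_1\ep\}$, so the uniform nondegeneracy (Theorem \ref{nodegfinal}) furnishes $y_j\in\overline{B_{\delta/4}(x_j)}$ with $u^\ep(y_j)\ge c_0\delta/4$; taking $B_j^1=B_{r_j}(y_j)$ and requiring $c_2$ large enough that $\lambda\le(c_0/16)\delta$, the Lipschitz bound again gives $v\ge c_0\delta/8$ throughout $B_j^1$.

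Let $m_j=\pint_{B_j}v\,dx$. At least one of $B_j^1,B_j^2$ must contain a point with $|v-m_j|\ge c\delta$, for otherwise $v$ would oscillate by less than the gap $(c_0/8-c_0/16)\delta$ it actually exhibits between $B_j^1$ and $B_j^2$. Combining Jensen's inequality (convexity of $G$, Lemma \ref{prop}(g2)) with the Poincar\'e inequality,
$$
\pint_{B_j}G(|\nabla v|)\,dx\ge G\Big(\pint_{B_j}|\nabla v|\,dx\Big)\ge G\Big(\frac{C}{|B_j|}\int_{B_j}\frac{|v-m_j|}{\delta}\,dx\Big)\ge G(C'),
$$
where in the last inequality I use that the set where $|v-m_j|\ge c\delta$ has measure $\ge k|B_j|$ with $k$ a dimensional constant. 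Hence $\int_{B_j}G(|\nabla v|)\,dx\ge C\,|B_j|$.

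Finally, summing over $j$, using the bounded overlap of $\{B_j\}$, the identity $\nabla v=\chi_{\{u^\ep>\lambda\}}\nabla u^\ep$ and the pointwise bound $u^\ep\le \lambda+L\delta\le C_1\delta$ on each $B_j$ (with $C_1=c_2^{-1}+L$), I obtain
$$
|\mathcal N_\delta(\partial\Omega_\lambda)\cap B_{R-\delta}|\le \sum_j|B_j|\le C\sum_j\int_{B_j}G(|\nabla v|)\,dx\le C n_0\int_{\{\lambda<u^\ep<C_1\delta\}\cap B_{R+\delta}}G(|\nabla u^\ep|)\,dx,
$$
which is the assertion. The main obstacle is the calibration in the middle step: the radius $r_j$, the oscillation constant $c_0/16$ and the threshold $c_2$ have to be chosen compatibly so that the nondegeneracy constant $c_0$ dominates both the Lipschitz contribution $Lr_j$ and the parameter $\lambda$; once that balance is struck, the Orlicz Poincar\'e--Jensen computation is routine thanks solely to the convexity of $G$.
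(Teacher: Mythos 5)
Your argument reproduces the paper's proof step by step: the Vitali covering of $\mathcal{N}_\delta(\partial\Omega_\lambda)$ by balls of radius $\delta$ centered on $\partial\Omega_\lambda$ with overlap bounded by $n_0(N)$, the two subballs of radius $r_j=c_0\delta/(16L)$ produced from the Lipschitz bound and the uniform nondegeneracy, the choice $c_2^{-1}\le c_0/16$ so that $v\ge (c_0/8)\delta$ on $B_j^1$, the Jensen--Poincar\'e estimate that trades $\int_{B_j}G(|\nabla v|)$ for $|B_j|$ using only convexity of $G$, and the final summation bounded by $n_0$ and $C_1=c_2^{-1}+L$. The one phrasing to tighten is the dichotomy step: the contradiction argument yields that one of the two subballs satisfies $|v-m_j|\ge c\delta$ \emph{throughout} (not merely at one point), which is precisely what gives $|\{|v-m_j|\ge c\delta\}\cap B_j|\ge k|B_j|$ and makes the Poincar\'e lower bound nontrivial.
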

\begin{proof}
First we cover $\mathcal{N}_{\delta}(\partial\Omega_{\lambda})\cap
B_{R-\delta}$ with balls $B_j=B_{\delta}(x_j)$ with centers
$x_j\in \partial\Omega_{\lambda})\cap B_{R}$ which overlaps at
most by $n_0$ (with $n_0=n_0(N)$).

We affirm that in one of these balls there exists two subballs
$B_j^1$ and $B_j^2$ with radios $r_j=O(\delta)$ such that if
$u=(u^{\ep}-\lambda)^+$ then,
$$u\geq \frac{c_0}{8}\delta \quad \mbox{ in } B_j^1,\quad\quad u\leq \frac{c_0}{16}\delta \quad \mbox{ in }
B_j^2,$$ where $c_0$ is the constant of nondegeneracy for balls
centered in $B_{1/4}$ with radios at most $1/8$.

In fact, take $B_j^2=B_{r_j}(x_j)$ with $r_j=\frac{c_0}{16
L}\delta$ (here $\|\nabla u^{\ep}\|_{L^{\infty}(B_{3/4})}$).
Observe that $u^{\ep}(x_j)=\lambda$ then if $x\in B_j^2$ $u(x)\leq
L r_j=\frac{c_0}{16}\delta.$

Let now, $y_j\in \overline{B_{\delta/4}}(x_j)$ such that
$$u^{\ep}(y_j)=\sup_{B_{\delta/4}(x_j)} u^{\ep} \geq c_0
\frac{\delta}{4}.$$ Let $B_j^1=B_{r_j}(y_j)$ if $x\in B_j^1$ then,
$$u^{\ep}(x)\geq u^{\ep}(y_j)-L r_j\geq c_0 \frac{\delta}{4} -L
r_j $$ we have,
$$u^{\ep}(x)-\lambda \geq c_0 \frac{\delta}{4} -L
r_j -\lambda \geq  (\frac{c_0}{4} - \frac{c_0}{16}-
c_2^{-1})\delta\geq \frac{c_0}{8} \delta$$ if $c_2^{-1}\leq
\frac{c_0}{16}$.

Let $m_j=\pint_{B_j} u$. We affirm that in one of the balls
$B_j^1$, $B_j^2$ we must have $|u-m_j|\geq c\delta$. Suppose by
contradiction that there exist $x_1\in B_j^1$ and $x_2\in B_j^2$
with
$$|u(x_1)-m_j|< c\delta \quad \quad |u(x_2)-m_j|<
c\delta$$ then
$$\frac{c_0}{8}\delta-\frac{c_0}{16}\delta\leq
u(x_1)-u(x_2)<2 c\delta$$ which is a contradiction if we take
$c_0/16\geq 2c$.

Therefore, if $|B_j^1|=|B_j^2|=k|B_j|$ we have by the convexity of
$G$ and using Poincare inequality that
$$\frac{1}{|B_j|}\int_{B_j}
G(|\nabla u|)\, dx\geq G\Big(\frac{1}{|B_j|}\int_{B_j} |\nabla
u|\, dx\Big)\geq G\Big(\frac{C}{|B_j|}\int_{B_j} \frac{|
u-m_j|}{\delta}\, dx\Big)\geq G\Big(\frac{1}{|B_j|} k|B_j|
c\Big)$$ which means that
$$\int_{B_j}
G(|\nabla u|)\, dx\geq C |B_j|.$$ As
$$B_{R-\delta}\cap
\mathcal{N}_{\delta}(\partial\Omega_{\lambda})\subset \bigcup
B_j$$ we have
\begin{align*}
|B_{R-\delta}\cap
\mathcal{N}_{\delta}(\partial\Omega_{\lambda})|&\leq \sum
|B_j|\leq \frac{1}{C} \sum \int_{B_j} G(|\nabla u|)\, dx\\ & \leq
\frac{n_0}{C} \int_{\bigcup B_j} G(|\nabla u|)\, dx= \frac{n_0}{C}
\int_{\bigcup B_j\cap \{u^{\ep}>\lambda\}} G(|\nabla u^{\ep} |)\,
dx.
\end{align*}
On the other hand, if $x\in B_j$ then $u^{\ep}(x)< C_1 \delta$
where $C_1=c_2^{-1}+L$. Then, as $\bigcup B_j \subset
B_{R+\delta}$, we have
$$|B_{R-\delta} \cap
\mathcal{N}_{\delta}(\partial\Omega_{\lambda})|\leq \frac{n_0}{C}
\int_{\{\lambda < u^{\ep} < C_1 \delta \}\cap B_{R+\delta}}
G(|\nabla u^{\ep} |)\, dx.$$
\end{proof}

 Now, using Lemma \ref{gradgrad} we have
 $$|B_{R-2\delta} \cap
\mathcal{N}_{\delta}(\partial\Omega_{\lambda})|\leq C_0
\int_{\{\lambda < u^{\ep} < C_1 \delta \}\cap B_{R}} G(|\nabla
u^{\ep} |)\, dx\leq C_0 c C_1 \delta R^{N-1}.$$ As $|B_R\setminus
B_{R-2\delta}|\leq C \delta R^{N-1}$ we obtain the conclusion of
Theorem \ref{nmenos1}.

\appendix
\renewcommand{\theequation}{\Alph{section}.\arabic{equation}}

\section{Properties of $G$ and Orlicz spaces}\label{appAA}
\setcounter{equation}{0} The following results are all include in
\cite{MW1}.
\begin{lema}\label{prop} The function $g$
satisfies the following properties,
\begin{enumerate}
\item[(g1)] $\di \min\{s^{\delta},s^{g_0}\} g(t)\leq g(st)\leq
\max\{s^{\delta},s^{g_0}\} g(t)$
\smallskip
 \item[(g2)] $G$ is convex and $C^2$\item[(g3)]
$\di\frac{t g(t)}{1+g_0}\leq G(t)\leq t g(t) \quad \forall\ t\geq
0.$

\end{enumerate}

\end{lema}

We recall that the functional
$$\|u\|_G=\inf\Big\{k>0:\int_{\Omega} G\Big(\frac{|u(x)|}{k}\Big)\,
dx \leq 1\Big\}$$ is a norm in the Orlicz space $L_{G}(\Omega)$
which is the linear hull of the Orlicz class
$$K_G(\Omega)=\Big\{u \mbox{ measurable }:\ \int_{\Omega}
G(|u|)\, dx<\infty\Big\},$$ observe that this set is convex, since
$G$ is also convex (property (g2)). The Orlicz-Sobolev space
$W^{1,G}(\Omega)$ consists of those functions in $L^{G}(\Omega)$
whose distributional derivatives $\nabla u$ also belong to
$L^G(\Omega)$. And we have that
$\|u\|_{W^{1,G}}=\max\{\|u\|_G,\|\nabla u\|_G\}$ is a norm for
this space.

\section{Blow-up limits}\label{appB}
\setcounter{equation}{0}

Now we  give the definition of blow-up sequence, and we collect
some properties of the limits of these blow-up sequences for
certain classes of functions that are used throughout the paper.

Let $u$ be a function with the following properties,
\begin{enumerate}
\item[(C1)] $u$ is Lipschitz in $\Omega$ with constant $L>0$,
$u\geq 0 \mbox{ in } \Omega$ and $\mathcal{L} u=0 \mbox{ in }
\Omega\cap\{u>0\}$. \item[(C2)] Given $0<\kappa<1$, there exist
two positive constants $C_{\kappa}$ and $r_{\kappa}$ such that for
every ball $B_r(x_0)\subset\Omega$ and $0<r<r_{\kappa}$,
$$
\frac{1}{r}\left(\pint_{B_r(x_0)} u^{\gamma}\, dx
\right)^{1/\gamma}\leq C_{\kappa} \mbox{ implies that } u\equiv 0
\mbox{ in } B_{\kappa r}(x_0).
$$
\end{enumerate}

\begin{defi}
Let $B_{\rho_k}(x_k)\subset\Omega$ be a sequence of balls with
$\rho_k\to 0$, $x_k\to x_0\in \Omega$ and $u(x_k)=0$. Let
$$
u_k(x):=\frac{1}{\rho_k} u(x_k+\rho_k x).
$$
We call $u_k$ a blow-up sequence with respect to
$B_{\rho_k}(x_k)$.
\end{defi}

Since $u$ is locally Lipschitz continuous, there exists a blow-up
limit $u_0:\R^N\to\R$ such that for a subsequence,
\begin{align*}
& u_k\to u_0 \quad \mbox{in} \quad C^\alpha_{\rm loc}(\R^N)\quad
\mbox{for every}\quad 0<\alpha<1,\\
& \nabla u_k\to\nabla u_0\quad *-\mbox{weakly  in}\quad
L^\infty_{\rm loc}(\R^N),
\end{align*}
and $u_0$ is Lipschitz in $\RR^N$ with constant $L$.
\begin{lema}\label{propblowup}
If $u$ satisfies properties {\rm (C1)} and {\rm (C2)}  then,
\begin{enumerate}
\item $u_0\geq 0$ in $\Omega$ and $\mathcal{L} u_0=0$ in
$\{u_0>0\}$

\medskip

\item $\partial\{u_k>0\}\to \partial\{u_0>0\}$ locally in
Hausdorff distance,

\medskip

\item If $K\subset\subset \{u_0=0\}$, then $u_k=0$ in $K$ for big
enough $k$,

\medskip

\item If $K\subset\subset \{u_0>0\}\cup \{u_0=0\}^\circ$, then
$\nabla u_k\rightarrow\nabla u_0$ uniformly in $K$,

\medskip

\item If $x_k\in \partial\{u>0\}$, then $0\in
\partial\{u_0>0\}$
\end{enumerate}
\end{lema}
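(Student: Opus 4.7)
The plan is to treat the five items in the order (1), (3), (5), (2), (4), since each relies on the preceding ones.  Throughout, the key scaling identity is $\mathcal{L}u_k(x)=\rho_k\,\mathcal{L}u(x_k+\rho_k x)$, which holds because under the scaling $u_k(x)=\rho_k^{-1}u(x_k+\rho_k x)$ the gradient is preserved in norm, so the same function $A(p)=F(|p|)p$ appears in the operator.

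For (1), nonnegativity passes to uniform limits, and inside any ball $B\subset\subset\{u_0>0\}$ we have $u_0\ge 2c>0$, hence (by uniform convergence) $u_k\ge c>0$ on $B$ for $k$ large; then $\mathcal{L}u_k=0$ on $B$ and Lieberman's $C^{1,\alpha}$ estimates \cite{Li1} upgrade the convergence $u_k\to u_0$ to $C^{1,\alpha}(B')$ for any $B'\subset\subset B$, so $\mathcal{L}u_0=0$ in the limit.  For (3), fix $\kappa=1/2$ and cover $K$ by finitely many balls $B_r(y_j)\subset\subset\{u_0=0\}$ with $r<r_\kappa$.  By uniform convergence, $u_k\le C_\kappa r/2$ on $B_r(y_j)$ for $k$ large, hence
\[
\frac{1}{r\rho_k}\Big(\mathop{-\!\!\!\!\!\int\!\!\!\!\!-}\nolimits_{B_{r\rho_k}(x_k+\rho_k y_j)}u^\gamma\,dy\Big)^{1/\gamma}=\frac{1}{r}\Big(\mathop{-\!\!\!\!\!\int\!\!\!\!\!-}\nolimits_{B_r(y_j)}u_k^\gamma\,dx\Big)^{1/\gamma}\le C_\kappa,
\]
so (C2) forces $u=0$ on $B_{r\rho_k/2}(x_k+\rho_k y_j)$, i.e.\ $u_k=0$ on $B_{r/2}(y_j)$; finite covering gives $u_k=0$ on $K$.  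For (5), the contrapositive of (C2) applied at $x_k\in\partial\{u>0\}$ with $\kappa=1/2$ gives, for every sufficiently small $r>0$ and all large $k$, $\frac{1}{r}(\mathop{-\!\!\!\!\!\int\!\!\!\!\!-}\nolimits_{B_r}u_k^\gamma\,dx)^{1/\gamma}>C_\kappa$; letting $k\to\infty$ yields $\frac{1}{r}(\mathop{-\!\!\!\!\!\int\!\!\!\!\!-}\nolimits_{B_r}u_0^\gamma\,dx)^{1/\gamma}\ge C_\kappa$, which together with $u_0(0)=0$ (from uniform convergence and $u(x_k)=0$) forces $0\in\partial\{u_0>0\}$.

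For (2), fix a compact $K\subset\Omega$ and $\varepsilon>0$.  If $x\in\partial\{u_k>0\}\cap K$ were at distance $>\varepsilon$ from $\partial\{u_0>0\}$, then either $x$ lies in an open ball contained in $\{u_0>0\}$ (contradicting $u_k(x)=0$ via uniform convergence) or $x$ lies in an open ball $B\subset\{u_0=0\}^\circ$, but by item (3) $u_k\equiv 0$ on $B$ and so $x$ cannot be a free-boundary point of $u_k$; in either case we reach a contradiction.  Conversely, if $x\in\partial\{u_0>0\}\cap K$, then arbitrarily close to $x$ there are points $y\in\{u_0>0\}$ where, by uniform convergence, $u_k(y)>0$ for large $k$; applying (5) (which is local and translation invariant at scale $\rho_k$) to a sequence $\tilde x_k\to x$ with $\tilde x_k\in\partial\{u_k>0\}$ produced as follows --- pick $\tilde x_k$ realizing the distance from $x$ to $\partial\{u_k>0\}$; if that distance did not vanish, the nondegeneracy consequence in (5) applied at $x$ viewed as the base point (and noting $u_k\to u_0$) would contradict the fact that the $\gamma$-means of $u_0$ do not vanish on small balls around $x$.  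This gives both inclusions, hence Hausdorff convergence on $K$.

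For (4), split $K=K_+\cup K_0$ with $K_+\subset\subset\{u_0>0\}$ and $K_0\subset\subset\{u_0=0\}^\circ$.  On $K_0$, item (3) gives $\nabla u_k\equiv 0=\nabla u_0$ for large $k$.  On $K_+$, choose $K_+\subset\subset K'\subset\subset\{u_0>0\}$; by uniform convergence $u_k\ge c>0$ on $K'$ for large $k$, so $\mathcal{L}u_k=0$ in $K'$, and interior $C^{1,\alpha}$ estimates of Lieberman --- uniform in $k$ since the operator is fixed --- yield a uniform bound of $u_k$ in $C^{1,\alpha}(K_+)$, from which $\nabla u_k\to\nabla u_0$ uniformly on $K_+$.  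The only genuinely delicate point in the plan is the second half of (2), where one must convert information about integral averages (from (5) via (C2)) into actual free-boundary points of $u_k$ close to $x$; this is handled by arguing by contradiction along a subsequence and re-blowing up at the nearest free-boundary point of $u_k$, using (5) to conclude that this nearest point cannot escape to distance bounded away from zero without forcing $u_0\equiv 0$ in a neighborhood of $x$, contradicting $x\in\partial\{u_0>0\}$.
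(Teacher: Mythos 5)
Your ordering (1), (3), (5), (2), (4), the scaling identity $\mathcal{L}u_k(x)=\rho_k\,\mathcal{L}u(x_k+\rho_k x)$, and the rescaled form of (C2) are the right ingredients, and items (1), (3), (4), (5) are essentially sound. In (3) you should cover $K$ by the balls $B_{r/2}(y_j)$ rather than $B_r(y_j)$, since (C2) with $\kappa=1/2$ only forces $u_k\equiv 0$ on the half-radius ball; that is a trivial fix. Note also that the paper's own proof of this lemma is a bare citation to external references, so there is no internal argument to compare yours against.

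The genuine gap is in the second inclusion of (2). You take $x\in\partial\{u_0>0\}$ and argue that if $\mathrm{dist}(x,\partial\{u_k>0\})\ge\varepsilon>0$ along a subsequence, then applying (5) with $x$ ``viewed as the base point'' forces $u_0\equiv 0$ near $x$. But (5) is a statement about the base point $0$ when $x_k\in\partial\{u>0\}$; it has no content at a point $x$ that is bounded away from $\partial\{u_k>0\}$. What is actually needed is the dichotomy: since $B_\varepsilon(x)$ is connected and avoids $\partial\{u_k>0\}$, along a further subsequence either $u_k\equiv 0$ on $B_\varepsilon(x)$, or $u_k>0$ on $B_\varepsilon(x)$. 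The first case is easy ($u_0\equiv 0$ there by uniform convergence). The second case is the one your argument does not reach: there $\mathcal{L}u_k=0$ on $B_\varepsilon(x)$, and the uniform interior $C^{1,\alpha}$ estimates of \cite{Li1} give $\mathcal{L}u_0=0$ on $B_{\varepsilon/2}(x)$; since $u_0\ge 0$ and $u_0(x)=0$, the Harnack inequality for nonnegative $\mathcal{L}$-solutions forces $u_0\equiv 0$ on a neighborhood of $x$, contradicting $x\in\partial\{u_0>0\}$. This Harnack step is the missing ingredient. Your appeal to the $\gamma$-means of $u_0$ near $x$ via (C2) cannot substitute for it, because in this second case (C2) applied to $u_k$ only produces a positive lower bound on those $\gamma$-means, which is perfectly consistent with $x\in\partial\{u_0>0\}$ rather than contradictory to it.
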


\begin{proof}
The proof follows as in \cite{FBMW1} and \cite{Le}.
\end{proof}

\section{A result on $\mathcal{L}$-solutions functions with linear growth}\label{appA}
\setcounter{equation}{0}

In this section we will state some properties of
$\mathcal{L}$-subsolutions.

\begin{lema}\label{psub2}
Let $u$ be a $\L-$ solution function in $B_r^+$ such that, $0\leq
u\leq \alpha x_N$ in $B_r^+$, $u\leq \delta_0 \alpha x_N$ on
$\partial B_r^+\cap B_{r_0}(\bar{x})$ with $\bar{x}\in
\partial B^+_r$, $\bar{x}_N>0$ and $0<\delta_0<1$.

Then there exists $0<\gamma<1$ and $0<\ep\leq 1$, depending only
on $r$ and $N$, such that $u(x)\leq \gamma \alpha x_N$ in $B_{\ep
r}^+$.
\end{lema}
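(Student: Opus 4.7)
The plan is to reduce to the case $r=1$ by the scaling invariance $u\mapsto u(r\cdot)/r$ of the equation $\L u=0$, and then construct a barrier of the form $\alpha\psi^\alpha$, where $\psi^\alpha$ solves an $\L_\alpha$-problem with $g_\alpha(t)=g(\alpha t)$, subject to the boundary condition $\psi^\alpha=x_N$ on $\partial B_1^+\setminus B_{r_0}(\bar x)$, $\psi^\alpha=\delta_0 x_N$ on $\partial B_1^+\cap B_{r_0/2}(\bar x)$, with a smooth interpolation in between and $\delta_0 x_N\le\psi^\alpha\le x_N$ on the overlap annular region. Since $\L(\alpha\psi^\alpha)=0$ and $\alpha\psi^\alpha\ge u$ on $\partial B_1^+$, the comparison principle of \cite{MW1} gives $u\le\alpha\psi^\alpha$ in $B_1^+$, so the conclusion reduces to proving that $\psi^\alpha\le\gamma x_N$ in $B_\eta^+$ with constants $\gamma<1$ and $\eta>0$ independent of $\alpha$.

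The key observation making this uniform in $\alpha$ is that the rescaled functions $g_\alpha$ all satisfy condition \eqref{cond} with the \emph{same} constants $\delta, g_0$, so the Lieberman regularity of \cite{Li1} furnishes $C^{1,\beta}$ bounds on $\overline{B_{\rho_0}^+}$ and $C^\beta$ bounds on $\overline{B_1^+}$ for $\psi^\alpha$, plus Harnack constants, all \emph{independent} of $\alpha$. I would then argue in four steps.

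First, I would introduce $w^\alpha=x_N-\psi^\alpha\ge 0$ (comparison with the $\L_\alpha$-solution $x_N$), and note that wherever $|\nabla\psi^\alpha|\ge\mu>0$ we have $\psi^\alpha\in W^{2,2}$ and $w^\alpha$ satisfies the linear uniformly elliptic equation $\mathcal T_\alpha w^\alpha=0$ with coefficients $b_{ij}^\alpha$ whose ellipticity depends only on $\delta, g_0$. Second, I would prove a uniform lower bound $\psi^\alpha(e_N/2)\ge c>0$: if this failed along $\alpha_k\to 0$, uniform $C^\beta$ compactness would produce a limit $\psi\equiv 0$, contradicting the boundary condition $\psi^\alpha=\delta_0 x_N$ on $B_{r_0/2}(\bar x)\cap\partial B_1^+$. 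Harnack then propagates this bound, and a standard exponential barrier (as in Lemma 2.9 of \cite{MW1}) gives, by comparison, $\psi^\alpha_{x_N}\ge\bar c>0$ on $\{x_N=0\}\cap B_{1/2}$; by uniform H\"older continuity of $\nabla\psi^\alpha$, this yields $|\nabla\psi^\alpha|\ge\bar c/2$ in a fixed half-ball $B_\rho^+$.

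Third, with uniform non-degeneracy of $|\nabla\psi^\alpha|$ in $B_\rho^+$ at hand, $\mathcal T_\alpha w^\alpha=0$ is uniformly elliptic there. I would show $w^\alpha(\rho e_N/2)\ge\tilde c>0$: otherwise, Harnack inequality forces $w^{\alpha_k}\to 0$ locally, and the $C^1$ convergence $w^{\alpha_k}\to w=x_N-\psi$ (with $\psi$ the $\L$-limit) gives $w\equiv 0$ on $B_1^+$ by a continuation argument based on Harnack at points of $\partial\{w=0\}\cap B_1^+$ (where $\nabla w=0$ and $|\nabla\psi^{\alpha_k}|\ge 1/2$, so $\mathcal T_{\alpha_k}w^{\alpha_k}=0$ holds in a neighborhood), again contradicting $w=(1-\delta_0)x_N>0$ on $B_{r_0/2}(\bar x)\cap\partial B_1^+$. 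Finally, Hopf's boundary principle applied to $\mathcal T_\alpha w^\alpha=0$ in $B_\rho^+$ gives $w^\alpha\ge\sigma_1 x_N$ in $B_{\rho/2}^+$ for some $\sigma_1>0$ depending only on $N, \delta, g_0$, so $\psi^\alpha\le(1-\sigma_1)x_N$ there, which is the sought inequality with $\gamma=1-\sigma_1$ and $\eta=\rho/2$.

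The main obstacle is the uniformity in $\alpha$, since the operator $\L_\alpha$ depends nonlinearly on $\alpha$ through $g_\alpha(t)=g(\alpha t)$; the whole argument only works because $g_\alpha$ stays in the Lieberman class \eqref{cond} uniformly, which makes all Harnack constants, $C^{1,\beta}$ bounds, and barrier constants uniform. A secondary subtle point is the continuation argument in the third step, where one needs $\mathcal T_{\alpha_k}w^{\alpha_k}=0$ near every boundary point of $\{w=0\}$ inside $B_1^+$, which requires $|\nabla\psi^{\alpha_k}|$ to be bounded below near such a point — this follows because $\nabla w(x_1)=0$ at a boundary minimum, so $\nabla\psi^{\alpha_k}(x_1)\to e_N$.
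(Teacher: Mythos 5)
Your proposal is correct and follows essentially the same route as the paper's proof: rescaling to $r=1$, the barrier $\alpha\psi^{\alpha}$ with $g_{\alpha}(t)=g(\alpha t)$ and comparison, the observation that the $g_{\alpha}$ stay in the Lieberman class with the same $\delta,g_0$ (hence uniform $C^{1,\beta}$, Harnack and ellipticity constants), the lower bound on $\psi^{\alpha}_{x_N}$ near $\{x_N=0\}$ via the exponential barrier, the compactness/continuation contradiction giving $w^{\alpha}(\rho e_N/2)\ge\tilde c$, and Hopf's principle for $w^{\alpha}=x_N-\psi^{\alpha}$ yielding $\gamma=1-\sigma_1$, $\eta=\rho/2$. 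Even the subtle points you flag (uniformity in $\alpha$ and the gradient lower bound near points of $\partial\{w=0\}$ via $\nabla\psi^{\alpha_k}\to e_N$) are exactly the ones handled in the paper's Steps 2--4.
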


\begin{proof}
By the invariance of $\L$- solution under the rescaling
$\bar{u}(x)=u(rx)/r$ we can suppose that $r=1$.

Let $\psi^{\alpha}$ be a $\L_{\alpha}$-solution in $B_1^+$, with
smooth boundary data, such that
$$
\begin{cases}
\psi^{\alpha}=x_N & \mbox{on } \partial B_1^+\setminus B_{r_0}(\bar{x})\\
\delta_0 x_N\leq \psi^{\alpha} \leq x_N & \mbox{on } \partial
B_1^+\cap B_{r_0}(\bar{x})\\
\psi^{\alpha}=\delta_0 x_N & \mbox{on } \partial B_1^+\cap
B_{r_0/2}(\bar{x}),
\end{cases}
$$
where $\L_{\alpha} v=\mbox{div} \Big(\frac{g_{\alpha}(|\nabla
v|)}{|\nabla v|} \nabla v\Big)$ and $g_{\alpha}(t)= g(\alpha t)$.

 Therefore $\L(\alpha \psi^{\alpha})=0$, and by the comparison
 principle(see \cite{MW1})
 $u\leq \alpha \psi^{\alpha}$ in $B_1^+$. If we see
 that there exist $0<\gamma<1$ and $\ep>0$, independent of
$\alpha$, such that $\psi^{\alpha}\leq \gamma x_N$ in $B_{\ep}^+$,
the result follows.

First, observe that,
\begin{equation}\label{clasealpha}
\delta \leq \frac{g_{\alpha}'(t) t}{g_{\alpha}(t)}\leq g_0,
\end{equation}
 then
by \cite{Li1},
\begin{equation}\label{propphi}
\begin{aligned}
&\psi^{\alpha}\in C^{1,\beta}(\overline{B_1^+}) \mbox{ for some
}\beta>0,\\& \mbox{ the } C^{1,\beta} \mbox{ norm is bounded by a
constant independent of } \alpha \mbox{ and }\\& \mbox{ the
constant of the Harnack inequality  is independent of } \alpha.
\end{aligned}
\end{equation}
If $|\nabla \psi^{\alpha}|\geq \mu>0$ in some open set $U$, we
have  that $\psi^{\alpha}\in W^{2,p}(U)$ and is a solution of the
linear uniformly elliptic equation,
\begin{equation}\label{lue}
\mathcal{T_{\alpha}}\psi=\sum_{i,j=1}^N b^{\alpha}_{ij}
\psi^{\alpha}_{x_i x_j} =0 \quad \mbox{ in } U,
\end{equation}
where $b^{\alpha}_{ij}$ was define in \cite{MW1}, and the constant
of ellipticity depends only on $g_0$ and $\delta$.

%
Now, we divide the proof in several steps,

{\bf Step 1}

 Let $w^{\alpha}=x_N-\psi^{\alpha}$ then $w^{\alpha}\in
C^{1,\beta}(\overline{B_1^+})$ and is a solution of
$\mathcal{T_{\alpha}}w^{\alpha} =0$ in any open set $U$ where
$|\nabla \psi^{\alpha}|\geq \mu>0$.

On the other hand, as $\psi^{\alpha}\leq x_N$ in $\partial B_1^+$
and both functions are $\L^{\alpha}$-solutions we have, by
comparison, that $\psi^{\alpha}\leq x_N$ in $B_1^+$. Therefore
$w^{\alpha}\geq 0$ in $B_1^+$.


{\bf Step 2}

Let us prove that, there exist $\rho$ and $\bar{c}$ independent of
$\alpha$, such that  $|\nabla\psi^{\alpha}|\geq \bar{c}$ in
$B_{\rho}^+$.

First, let as see that there exists $c>0$ independent of $\alpha$
such that
\begin{equation}\label{cotanoalpha} \psi^{\alpha}(1/2 e_N)\geq c.
\end{equation}

If not, there exists a sequences of $\alpha_k\to 0$ such that
$\psi^{\alpha_k}(1/2 e_N)\to 0$, but, since the constant in the
Harnack's inequality is independent of $\alpha$ (see
\eqref{propphi}), we have that, $\psi^{\alpha_k}\to 0$ uniformly
in compact sets of $B_1^+$. On the other hand, using that
$\psi^{\alpha}$ are uniformly bounded in
$C^{1,\beta}(\overline{B_1^+})$, we have that there exists
$\psi\in C^1(\overline{B_1^+})$ such that, for a subsequence
$\psi^ {\alpha_k}\to \psi$ uniformly in $\overline{B_1^+}$.
Therefore $\psi=0$ in $\overline{B_1^+}$, but we have that
$\psi=\delta_o x_N$ on $ B_{r_0/2}(\bar{x})\cap
\partial B_1^+$, which is a contradiction.

Let $x_1\in \{x_N=0\}\cap B_{1/2}$, take $x_0=x_1+\frac{e_N}{4}$.
By \eqref{propphi} we have that there exists a constant $c_1$
independent of $\alpha$ such that, $\psi^{\alpha}(x)\geq c_1
\psi^{\alpha}(1/2 e_N)$ for any $x\in \partial B_{1/8}(x_0)$ , and
therefore by \eqref{cotanoalpha} $\psi^{\alpha}\geq \bar{c}$ in
$\partial
B_{1/8}(x_0)$.\\
Take $v=\ep (e^{-\lambda|x-x_0|^2}-e^{-\lambda /16})$, and choose
$\lambda$ such that $\L v>0$ in $B_{1/4}(x_0)\setminus
B_{1/8}(x_0)$ and $\ep$ such that $v=\bar{c}$ on $\partial
B_{1/8}(x_0)$ (observe that, by Lemma 2.9 in \cite{MW1} $\lambda$
and $\ep$ can be chosen independent of $\alpha$). Since
$\psi^{\alpha}\geq 0$ and $\psi^{\alpha}\geq \bar{c}0v$ on
$\partial B_{1/8}(x_0)$ we have by comparison that
$\psi^{\alpha}\geq v$ in $B_{1/4}(x_0)\setminus B_{1/8}(x_0)$. On
the other hand $-v_{x_N}(x_1)=\ep 2 \lambda (x-x_0)_N=\ep 2\lambda
1/4=\bar{c}$, and therefore $ -\psi^{\alpha}_{x_N}(x_1)\geq
\bar{c}$. As the $\psi^{\alpha}$ are uniformly Lipschitz, we have
that there exists a $\rho$ independent of $\alpha$ such that $
-\psi^{\alpha}_{x_N}(x_1)\geq \bar{c}$ in $B_{\rho}^+$ .

{\bf Step 3}

 Since $|\nabla \psi^{\alpha}|\geq \bar{c}$ in $B_{\rho}^+$, we have that,
  $\mathcal{T}_{\alpha}w^{\alpha}= 0$ there. Suppose that\\
$w^{\alpha}(1/2 e_N\rho)\geq \tilde{c}$, with $\tilde{c}$
independent of $\alpha$. Then by the Harnack's inequality we have
that there exists $\sigma_1$ depending on $\beta$ and $N$ such
that, $w^{\alpha}\geq \sigma_1 w^{\alpha}(1/2 e_N \rho)\geq
\sigma_2$ in $B_{\rho/2}^+$, where $\sigma_2$ is a constant
independent of $\alpha$. Therefore $w^{\alpha}\geq \sigma_2
2\rho^{-1} x_N $ in $B_{\rho/2}^+$,
  then taking
$\gamma=1-2\rho^{-1} \sigma_2$ and $\ep=\rho/2$, we obtain the
desired result.

{\bf Step 4}

 Let as see that $w_{\alpha}(1/2 e_N \rho)\geq \tilde{c}>0$
where $\tilde{c}$ is independent of $\alpha$. Suppose, by
contradiction that for a subsequence, $w_{\alpha_k}(1/2 e_N
\rho)\to 0$. We know that in $B_{\rho}^+$ $\mathcal{T}_{\alpha}
w_{\alpha}=0$, then applying Harnack's inequality we have that for
any compact subset $K\subset\subset B$ we have that $w_{\alpha}\to
0$ uniformly in $K$. On the other hand, the $\psi_{\alpha}$ are
uniformly bounded in $C^{1,\beta}(\overline{B_1^+})$, then, there
exists $\bar{w}\in C^1(\overline{B_1^+})$ such that, for a
subsequence $w^{\alpha_k}\to w$ in $C^1(\overline{B_1^+})$. Let
$$
\mathcal{A}=\{x\in B_1^+\ /\ \bar{w}=0\},
$$
and suppose that, there exist a point $x_1\in \partial A\cap
B_1^+$, then as $w^{\alpha}\geq 0$ we have that $\bar{w}$ has a
minimum there, therefore $\nabla \bar{w}(x_1)=0$. As $\nabla
w^{\alpha_k}\to \nabla \bar{w}$ uniformly in $\overline{B_1^+}$,
we have that for some $\tau>0$ independent of $\alpha_k$, $|\nabla
\psi_{\alpha_k}|\geq 1/2$ in $B_{\tau}(x_1)$, then, in this ball,
the $w^{\alpha_k}$ satisfy $\mathcal{T}_{\alpha_k}w^{\alpha_k}=0$.
We can applying Harnack's inequality in $B_{\tau}(x_1)$ and then,
passing to the limit  we obtain that $\bar{w}=0$ in
$B_{\tau/2}(x_1)$, which is a contradiction. Then $\bar{w}=0$ in
$\overline{B_1^+}$, but, on the other hand we have
$\bar{w}=x_N-\delta_0 x_N>0$ on $\partial B_1\cap\partial
B_{r_0/2}(\bar{x})$, which is a contradiction.

%

%
%
\end{proof}

With Lemma \ref{psub2} we can also prove the asymptotic
development of $\L-$ solutions.

\begin{lema}\label{development11}
Let $u$ be  Lipschitz continuous in $\overline{B_1^+}$, $u\geq 0$
in $B_1^+$, $\L$-solution in $\{u>0\}$ and vanishing on $
B_1^+\cap \{x_N=0\}$. Then, in $B_1^+$, $u$ has the asymptotic
development
$$
u(x)=\alpha x_N+ o(|x|),
$$
with $\alpha\ge 0$.
\end{lema}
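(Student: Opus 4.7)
The plan is to identify $\alpha$ as the ``optimal linear slope'' at the origin via an infimum over shrinking half-balls, and then rule out failure of the asymptotic expansion via a blow-up argument combined with Lemma \ref{psub2}.

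First I would define
$$\alpha_j=\inf\{l\ge 0:\ u\le l\,x_N\text{ in }B_{2^{-j}}^+\}.$$
By the Lipschitz bound $\alpha_0<\infty$, and since $B_{2^{-(j+1)}}^+\subset B_{2^{-j}}^+$ the sequence $(\alpha_j)$ is nonincreasing; hence $\alpha:=\lim_j\alpha_j\ge0$ exists. For any $\varepsilon_0>0$ there is $j_0$ with $\alpha_j\le\alpha+\varepsilon_0$ for $j\ge j_0$, so $u(x)\le(\alpha+\varepsilon_0)x_N$ in $B_{2^{-j_0}}^+$, which immediately gives the one-sided bound $u(x)\le\alpha x_N+o(|x|)$. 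If $\alpha=0$ the statement follows from $u\ge0$, so I will assume $\alpha>0$ and argue by contradiction.

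Suppose there exist $\bar\delta>0$ and $x_k\to0$ with $u(x_k)\le\alpha x_{k,N}-\bar\delta|x_k|$. Setting $r_k=|x_k|$ and $u_k(x)=r_k^{-1}u(r_kx)$, the family is uniformly Lipschitz, so along a subsequence $u_k\to u_0$ uniformly on $\overline{B_1^+}$ and $\bar x_k:=x_k/r_k\to x_0\in\overline{\partial B_1^+}$. Taking limits,
$$u_0(x_0)\le\alpha x_{0,N}-\bar\delta,\qquad 0\le u_k(x)\le(\alpha+\varepsilon_0)x_N\text{ on }\partial B_1^+ \text{ for }k\text{ large},$$
the boundary bound holding because $r_k\le 2^{-j_0}$ eventually. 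Each $u_k$ satisfies $\mathcal L u_k=0$ in $\{u_k>0\}\cap B_1^+$ (so in particular $\mathcal L u_k\ge 0$) and vanishes on $\{x_N=0\}$, exactly the hypothesis of Lemma \ref{psub2} once the strict boundary inequality on a small ball is in place.

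To set that up, I would use the uniform equicontinuity of $u_k$: from $u_0(x_0)\le\alpha x_{0,N}-\bar\delta$ there is a radius $r_0>0$ with $u_k(x)\le\alpha x_N-\bar\delta/4$ on $B_{4r_0}(x_0)\cap\overline{B_1^+}$ for $k$ large. If $x_{0,N}>0$ take $\bar x=x_0$; otherwise shift to $\bar x\in\partial B_1^+\cap B_{3r_0}(x_0)$ with $\bar x_N>0$. On $\partial B_1^+\cap B_{r_0}(\bar x)\subset\subset\{x_N>0\}$ we then get $u_k\le\delta_0(\alpha+\varepsilon_0)x_N$ for some $\delta_0<1$ independent of $k,\varepsilon_0$. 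Lemma \ref{psub2} applied to $u_k$ with slope $\bar\alpha=\alpha+\varepsilon_0$ yields constants $0<\gamma<1$ and $\eta>0$ (depending only on $N$) with $u_k(x)\le\gamma(\alpha+\varepsilon_0)x_N$ in $B_\eta^+$. Rescaling back, $u(x)\le\gamma(\alpha+\varepsilon_0)x_N$ in $B_{r_k\eta}^+$, and letting $\varepsilon_0\to0$ first and then choosing $j$ with $2^{-j}\le r_k\eta$ and $\gamma\alpha<\alpha_j$, we contradict the definition of $\alpha_j$.

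The main obstacle is the verification of the Lemma \ref{psub2} hypothesis, specifically producing a ball $B_{r_0}(\bar x)$ on $\partial B_1^+$ with $\bar x_N>0$ on which $u_k\le\delta_0\bar\alpha x_N$ uniformly in $k$. When $x_0$ lies on the degenerate portion $\{x_N=0\}$ the factor $x_N$ vanishes and the strict slackness $\bar\delta$ must be transferred into the interior by a small perturbation; verifying that the perturbation can be done uniformly in $k$ (so that $\gamma$ and $\eta$ remain independent of $k$ when Lemma \ref{psub2} is applied) requires careful use of the uniform modulus of continuity of the blow-up family.
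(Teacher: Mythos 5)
Your proposal is correct and follows essentially the same approach as the paper: the same definition of $\alpha_j$ and $\alpha$, the same blow-up sequence $u_k(x)=r_k^{-1}u(r_kx)$, the same use of equicontinuity to move the strict slackness onto a boundary ball $B_{r_0}(\bar x)$ with $\bar x_N>0$ (shifting $\bar x$ off the flat boundary when $x_{0,N}=0$), the same application of Lemma \ref{psub2} to get the improved bound $u_k\le\gamma(\alpha+\varepsilon_0)x_N$ in $B_\eta^+$, and the same contradiction with the definition of $\alpha_j$. The ``obstacle'' you flag at the end is precisely the step you have already handled correctly in the body of your argument, and it is the same technical point the paper addresses.
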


\begin{proof}
Let
$$
\alpha_j=\inf\{l\ /\ u\leq l x_n \mbox{ in } B_{2^{-j}}^+\}.
$$
Let $\alpha=\lim_{j\to\infty} \alpha_j$.

Given $\ep_0>0$ there exists $j_0$ such that for $j\geq j_0$ we
have $\alpha_j\leq \alpha+\ep_0$. From here, we have $u(x)\leq
(\alpha+\ep_0)x_N$ in $B_{2^{-j}}^+$ so that
$$
u(x)\leq \alpha x_N+o(|x|) \mbox{ in }B_1^+.
$$

If $\alpha=0$ the result follows. Assume that $\alpha>0$ and let
us suppose that $u(x)\neq \alpha x_N+o(|x|)$. Then there exists
$x_k\rightarrow 0$ and $\bar{\delta}>0$ such that
$$
u(x_k)\leq \alpha x_{k,N}-\bar{\delta} |x_k|.
$$
Let $r_k=|x_k|$ and $u_k(x)= r_k^{-1}u(r_k x)$. Then, there exists
$u_0$ such that, for a subsequence that we still call $u_k$,
$u_k\rightarrow u_0$ uniformly in $\overline{B_1^+}$ and
\begin{align*}
& u_k(\bar{x}_k)\leq \alpha \bar{x}_{k,N}-\bar{\delta}\\
& u_k(x)\leq (\alpha+\ep_0) x_N \mbox{ in } B_1^+,
\end{align*}
where $\bar{x}_k=\frac{x_k}{r_k}$, and we can assume that $\bar
x_k\rightarrow x_0$.

In fact, $u(x)\leq (\alpha+\ep_0) x_N$ in $B_{2^{-j_0}}^+$,
therefore $u_k(x)\leq (\alpha+\ep_0) x_N$ in $B_{r_k^{-1}
2^{-j_0}}^+$, and if $k$ is big enough so that $r^{-1}_k
2^{-j_0}\geq 1$.

If we take $\bar{\alpha}=\alpha+\ep_0$ we have
$$
\begin{cases}
\L u_k\geq 0 & \mbox{in }B_1^+\\
u_k=0 & \mbox{on } \{x_N=0\}\\
0\leq u_k \leq \bar{\alpha} x_N & \mbox{on } \partial
B_1^+\\
u_k\leq \delta_0 \bar{\alpha} x_N & \mbox{on } \partial B_1^+\cap
B_{\bar r}(\bar{x}),
\end{cases}
$$
for some $\bar{x} \in \partial B_1^+$, $\bar{x}_N>0$ and some
small $\bar r>0$.

In fact, as $u_k$ are continuous with uniform modulus of
continuity, we have
$$
u_k(x_0)\leq \alpha x_{0,N}-\frac{\bar{\delta}}{2}, \mbox{ if }
k\geq \bar{k}.
$$
Moreover there exists $r_0>0$ such that $u_k(x)\leq \alpha
x_N-\frac{\bar{\delta}}{4}$ in $B_{2r_0} (x_0)$. If $x_{0,N}>0$ we
take $\bar{x}=x_0$, if not, we take $\bar{x}\in B_{2r_0}(x_0)$
with $\bar{x}_N>0$ and
$$
u_k(x)\leq \alpha x_N-\frac{\bar{\delta}}{4}, \mbox{ in }
B_{r_0}(\bar{x})\subset\subset \{x_N>0\}.
$$
As $B_{r_0}(\bar{x})\subset\subset \{x_N>0\}$ there exists
$\delta_0$ such that $\alpha x_N-\frac{\bar{\delta}}{4}\leq
\delta_0 \alpha x_N \leq \delta_0 \bar{\alpha} x_N$ in $B_{\bar
r}(\bar{x})$ for some small $\bar r$, and the claim follows.

Now, by Lemma \ref{psub2}, there exists $0<\gamma<1$, $\ep>0$
independent of $\ep_0$ and $k$, such that $u_k(x)\leq \gamma
(\alpha+\ep_0) x_N$ in $B_{\ep}^+$. As $\gamma$ and $\ep$ are
independent of $k$ and $\ep_0$, taking $\ep_0\rightarrow 0$, we
have
$$
u_k(x)\leq \gamma \alpha x_N \mbox{ in }B_{\ep}^+.
$$
So that,
$$
u(x)\le \gamma\alpha x_N\mbox{ in }B_{r_k\ep}^+.
$$
Now if $j$ is big enough we have $\gamma \alpha <\alpha_j$ and
$2^{-j}\le r_k\ep$.  But this contradicts the definition of
$\alpha_j$. Therefore,
$$
u(x)=\alpha x_N+o(|x|),
$$
as we wanted to prove.
\end{proof}

\bibliographystyle{amsplain}

\def\cprime{$'$}
\providecommand{\bysame}{\leavevmode\hbox
to3em{\hrulefill}\thinspace}
\providecommand{\MR}{\relax\ifhmode\unskip\space\fi MR }
\providecommand{\MRhref}[2]{%
  \href{http://www.ams.org/mathscinet-getitem?mr=#1}{#2}
} \providecommand{\href}[2]{#2}

\end{document}